\providecommand{\noopsort[1]{}}
\numberwithin{equation}{section}
\setlist{leftmargin=*}
\setlist[1]{labelindent=1.2\parindent}
\newcommand\pgfmathsinandcos[3]{%
  \pgfmathsetmacro#1{sin(#3)}%
  \pgfmathsetmacro#2{cos(#3)}%
}
\newcommand\LongitudePlane[3][current plane]{%
  \pgfmathsinandcos\sinEl\cosEl{#2} 
  \pgfmathsinandcos\sint\cost{#3} 
  \tikzset{#1/.estyle={cm={\cost,\sint*\sinEl,0,\cosEl,(0,0)}}}
}
\newcommand\LatitudePlane[3][current plane]{%
  \pgfmathsinandcos\sinEl\cosEl{#2} 
  \pgfmathsinandcos\sint\cost{#3} 
  \pgfmathsetmacro\yshift{\cosEl*\sint}
  \tikzset{#1/.estyle={cm={\cost,0,0,\cost*\sinEl,(0,\yshift)}}} %
}
\tikzset{%
  >=latex, 
  inner sep=0pt,%
  outer sep=2pt,%
  mark coordinate/.style={inner sep=0pt,outer sep=0pt,minimum size=3pt,
    fill=black,circle}%
}
\newtheorem{thm}{Theorem}[section]
\newtheorem{cor}[thm]{Corollary}
\newtheorem{prop}[thm]{Proposition}
\newtheorem{lem}[thm]{Lemma}
\theoremstyle{remark}
\newtheorem{rem}[thm]{Remark}
\theoremstyle{definition}
\newtheorem{defn}[thm]{Definition}
\newcommand{\coloneqq}{\mathrel{\mathop:}=}
\renewcommand{\Re}{{\rm Re}\,}
\renewcommand{\Im}{{\rm Im}\,}
\newcommand{\eps}{\varepsilon}
\newcommand{\one}{\mathds{1}}
\newcommand{\CR}{\mathds{R}}
\newcommand{\CC}{\mathds{C}}
\newcommand{\CN}{\mathds{N}}
\newcommand{\ha}{\mathfrak {H}}
\newcommand{\e}{\mathrm {e}}
\newcommand {\x}{\mathbb{X}}
\newcommand{\mcj}{J}
\newcommand{\cL}{\mathscr{L}}
\newcommand{\weak}{\rightharpoonup}
\renewcommand{\eps}{\varepsilon}
\newcommand{\ud}{\, \mathrm{d}}
\newcommand{\lam}{\lambda}
\newcommand{\dlam}{\ud \lam }
\newcommand{\la}{\langle}
\newcommand{\ra}{\rangle}
\newcommand{\mc}{\mathscr}
\newcommand{\grak}{\lim_{\kappa \to \infty}}
\DeclareMathOperator{\interior}{\mathrm{Int}}
\let\div\undefined
\DeclareMathOperator{\div}{\mathrm{div}}
\DeclareMathOperator{\fa}{\mathfrak{h}}      
\DeclareMathOperator{\fb}{\mathfrak{q}}    
\DeclareMathOperator{\fq}{\mathfrak{a}}    
\DeclareMathOperator{\sgn}{\mathrm{sgn}}
\DeclareMathOperator{\diag}{\mathrm{diag}}
\begin{document}
\title[An averaging principle for fast diffusions]{An averaging principle for fast diffusions in domains separated by semi-permeable membranes}
\author{Adam Bobrowski}
\email{a.bobrowski@pollub.pl}
\address{Lublin University of Technology, Nadbystrzycka 38A, 20-618 Lublin, Poland}
\author{Bogdan Kazmierczak} 
\email{bkazmier@ippt.pan.pl}
\address{Institute of Fundamental Technological Research, PAS, Pawinskiego 5B, 02-106 Warsaw, Poland}
\author{Markus Kunze}
\email{markus.kunze@uni-konstanz.de}
\address{Universit\"at Konstanz, Fachbereich Mathematik und Statistik, 78467 Konstanz, Germany}

\begin{abstract}
We prove an averaging principle which asserts convergence of diffusion processes on domains separated by semi-permeable membranes, when diffusion coefficients tend to infinity while the flux through the membranes remains constant. In the limit, points in each domain are lumped into a single state of a limit Markov chain. The limit chain's intensities are proportional to the membranes' permeability and inversely proportional  to the domains' sizes. 
Analytically, the limit is an example of a singular perturbation in which boundary and transmission conditions play a crucial role.  This averaging principle is strongly motivated by recent signaling pathways models of mathematical biology, which are discussed towards the end of the paper. 
\end{abstract}

\keywords{Convergence of sectorial forms and of semigroups of operators, diffusion processes, boundary and transmission conditions, Freidlin--Wentzell averaging principle, singular perturbations, signaling pathways, kinase activity, intracellular calcium dynamics, neurotransmitters}
\subjclass[2010]{47A07, 47D07, 60J70, 92C45}

\maketitle 

\section{Introduction}  
The main aim of this article is to establish an averaging principle saying that fast diffusion processes on domains separated by semi-permeable domains may be approximated by certain Markov chains. More specifically, if diffusion's speed in each domain increases while the flux through the boundaries remains constant, in the limit, all points in each domain are lumped together to form a single state, and the limit process is a Markov chain whose state-space is composed of these lumped states (Theorems \ref{t.l2conv} and \ref{t.lpconv}). The jump intensities in the chain are  in direct proportion to the total permeability of the membranes, and in inverse proportion to the sizes of the domains (see eq.\ \eqref{qkl}). We note that the principle just described is akin to the famous Freidlin--Wentzell averaging principle (\cite{fw,fwbook}, see also \cite{freidlin}), though it is motivated by biological rather than physical models. Moreover, in contrast to the Freidlin--Wentzell principle, in our case the crucial role is played by transmission conditions.  

Predecessors of our principle have been studied in \cite{bobmor} and \cite{nagrafach}, see also \cite{gregosiewicz}.  In \cite{bobmor}, in an attempt to reconcile two models of so-called neurotransmitters (a macroscopic one of Aristizabal and Glavinovi\v{c} \cite{ag}, and a microscopic one of Bielecki and Kalita \cite{bielecki}) it has been shown that fast diffusions in three domains, corresponding to the so-called large, small, and immediately available pools, may be approximated by a Markov chain with three states, see Figure \ref{figure2}. In fact, in \cite{bobmor} merely a one-dimensional variant of this limit theorem has been proved, in which the three 3-dimensional pools are replaced by three adjacent intervals. This result has later been generalized to the case of fast diffusions on arbitrary finite graphs in \cite{nagrafach}; in both cases the limit theorems are stated as convergence theorems for semigroups in Banach spaces of continuous functions. In \cite{gregosiewicz}, a related result has been proved in a space of integrable functions.
See also \cite{banfalnam} for a generalization. 

\begin{figure}
\begin{tikzpicture}[scale=0.8]
\def\R{2} 
\def\x{0.7*\R}
\def\y{0.5*\x}
\def\angEl{5} 
\def\angAz{-125} 
\def\angPhi{0} 
\def\angBeta{90} 
\def\du{pink!20}
\def\srramka{red}
\def\sr{\srramka!30}
\def\maramka{brown}
\def\ma{\maramka!70}
\pgfmathsetmacro\H{\R*cos(\angEl)} 
\tikzset{xyplane/.estyle={cm={cos(\angAz),sin(\angAz)*sin(\angEl),-sin(\angAz),
                              cos(\angAz)*sin(\angEl),(0,0)}}}
\LongitudePlane[xzplane]{\angEl}{\angAz}
\LongitudePlane[pzplane]{\angEl}{\angPhi}
\LatitudePlane[equator]{\angEl}{0}

\shade[left color=brown,shading=axis,shading angle=220,] (0,0) ellipse(\x in and \y in);

\coordinate (N) at (0,2.55*\y);
\coordinate (S) at (0,-2.55*\y);



\fill[fill=\du,scale=0.9*\R] (0,0)-- (N)  arc (90:-90: 2 and 1)--(0,0);
\filldraw[fill=\du,draw=black,very thin,scale=0.89*\R] (0,0)--(N)arc (90:270: 0.7 and 1)--(0,0)  ;
\filldraw[fill=\sr, draw=\srramka,scale=1.7*\y] (0,0)-- (0,1.4*\y)  arc (90:-90: 2 and 1)--(0,0);
\filldraw[fill=\sr,draw=\srramka,scale=1.7*\y] (0,0)--(0,1.4*\y)  arc (90:270: 0.7 and 1)--(0,0);
\fill[fill=\ma,draw=\maramka,scale=1.3*\y] (0,0)--  (0,1.15*\y)  arc (90:-90: 2 and 1)--(0,0);
\fill[fill=\ma,draw=\maramka,scale=1.3*\y] (0,0)-- (0,1.15*\y)  arc (90:270: 0.7 and 1)--(0,0);

\draw[-, very thin] (N) -- (S);
\draw[very thin] (0,0) ellipse (\x in and \y in);


\coordinate(si1) at (2.25*\x,0*\y); 
\coordinate(si2) at (1.4*\x,0*\y); 
\coordinate(si3) at (0.2*\x,0*\y); 

\coordinate(na1) at (2.25*\x,-3.5*\y); 
\coordinate(na2) at (1.4*\x,+3.5*\y); 
\coordinate(na3) at (0.2*\x,-3.5*\y);

\coordinate(na4) at (-1.6*\x,3.5*\y);
\coordinate(na5) at (2*\x,3.5*\y);
\coordinate(na6) at (-1.6*\x,4.2*\y);

\draw[->, very thin,dashed] (na1)node[right]{\large{Immediately available pool}} -- (si1); 
\draw[->, very thin, dashed] (na2)node[right]{\large{Small pool}} -- (si2); 
\draw[->, very thin, dashed] (na3)node[right]{\large{Large pool}} -- (si3);\draw[color=black,->] (5,0) -- (9,0) node [pos=0.5,above] {fast diffusion} node [pos=0.5,below] {constant flux};
\draw[fill=\ma] (11,-1) circle (0.15); \draw[fill=\sr] (11,0) circle (0.15); \draw[fill=\du] (11,1) circle (0.15); 

\end{tikzpicture} \caption{Averaging principle in two models of neurotransmitters} \label{figure2}\end{figure}
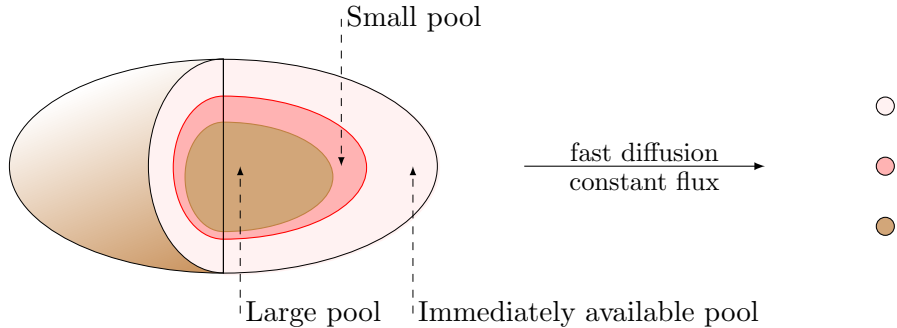

In this paper, we come back to the general, $d$-dimensional setting, and, in contrast to the previous papers, focus on the analysis in $L^p$ spaces ($p\ge 1$). {At first, we prove our main result in $L^2$  (see Section 
\ref{sect.conv} and
Theorem \ref{t.l2conv} in particular), using convergence theorems for quadratic forms. Later on we extend the analysis to other $L^p$ spaces} (Theorem \ref{t.lpconv}) by extrapolation and interpolation techniques -- see Section \ref{extension}. In Section \ref{sec:6}, three contemporary models of biology, including two recent signaling pathways models and the neurotransmitters model, are discussed as special cases of the principle so established.

As already mentioned, the key role in this analysis is played by transmission conditions (see \eqref{eq.trans}) describing the permeability of membranes.
In the context of heat flow,  these conditions may be plausibly interpreted: according to Newton's Law of Cooling, the temperature at the membrane changes at a rate proportional to the difference of temperatures on  
either sides of the membrane, see \cite[p.\ 9]{crank}. In this context, J.\ Crank uses the term \emph{radiation boundary condition}. (Although, strictly speaking, these are not \emph{boundary}, but \emph{transmission} conditions, see \cite{lions2,lions1,lions3}.) 

In the context of passing or diffusing through membranes, analogous transmission conditions were introduced by J.\ E.\ Tanner \cite[eq.\ (7)]{tanner}, who studied diffusion of particles through a sequence of permeable barriers (see also Powles et al. \cite[eq.\ (1.4)]{powles}, for a continuation of the subject).  In \cite{andrews} (see e.g.\ eq.\ (4) there) similar conditions are used in describing absorption and desorption phenomena. We refer also to \cite{fireman}, where a compartment model with permeable walls (representing e.g., cells, and axons in the white matter of the brain in particular) is analyzed, and to equation [42] there.  

In the context of neurotransmitters, conditions of type \eqref{eq.trans} were (re)-invented in  \cite{bobmor} and \cite{nagrafach}, interpreted in probabilistic terms, and linked with Feller--Wentzell's boundary conditions \cite{fellera1,fellera2,fellera4,fellera3,wentzell} (see 
\cite{lejay} for a more thorough stochastic analysis). 

A systematic study of semigroups and cosine families related to such transmission conditions has been commenced in \cite{tombatty}. We note also the recent paper \cite{bardos}, where a heat problem for such transmission conditions is studied for quite irregular boundaries, and the monograph \cite{agranovicz} in which related transmission conditions are analyzed.

\section{General idea and a word on mathematical tools} 

\subsection{General idea} \label{s.gi}
One of the fundamental properties of diffusion in a bounded domain is that it `averages' solutions of the heat equation over the domain (see e.g. \cite{smoller}). The effect is probably best known in the context of Neumann boundary conditions. To restrict ourselves to the simplest one-dimensional case, consider the heat equation in the interval $[a,b]$ ($a< b$)
\begin{equation} \frac {\partial u (t,x)}{\partial t} = \kappa \frac {\partial^2 u (t,x)}{\partial x^2}, \qquad x \in (a,b),  \label{simplemodel}\end{equation}
where $\kappa >0$ is a diffusion constant, with Neumann boundary conditions $\frac {\partial u (t,a)}{\partial x} = \frac {\partial u (t,b)}{\partial x} =0$. Then, regardless of the choice of the initial condition, say a continuous function $u_0$ on $[a,b]$,
\begin{equation}\lim_{t\to \infty}  u(t,x) = \frac 1{b-a} \int_a^b u_0(y)  \ud y , \label{intro1}\end{equation}
 uniformly with respect to $x\in [a,b]$. The same is true also when $u_0$ is a member of $L^p(a,b), p\ge 1$,  the space of  functions on $(a,b)$  that are
absolutely integrable with $p$-th power, and convergence is understood in the sense of the norm in this space. (In $L^2(a,b)$ this can be demonstrated by looking at the Fourier expansion of the solution.) A physical interpretation of \eqref{intro1} is that as time passes the temperature distribution in an isolated finite rod `averages out' and becomes constant throughout the rod. 

In modeling biological phenomena, one sometimes may assume that diffusion involved in the model is of several magnitudes faster than other processes. This leads to a study of the situation where diffusion coefficient(s) converge(s) to infinity. For example, 
in our simple model \eqref{simplemodel}, we  could be interested in letting $\kappa \to \infty $. A counterpart of \eqref{intro1} would than say that, if heat is propagated in the rod without hindrances, then `before other forces intervene' the temperature will become constant throughout the rod. So, if the rod is a part of a larger system, its state at time $t>0$ may in fact be described by a single number, and not by a comparatively more complex object, i.e., a temperature distribution function.

To look at a more interesting situation, consider $a<0<b$ and diffusion in two adjacent intervals, $(a,0)$ and $(0,b),$ separated by semi-permeable membrane at $x=0$. Assuming, for simplicity, that diffusion coefficients in both intervals are the same, we write  the diffusion equation 
\begin{equation} \frac {\partial u (t,x)}{\partial t} = \kappa \frac {\partial^2 u (t,x)}{\partial x^2}, \qquad x \in (a,0)\cup (0,b),  \label{simplenotmodel}\end{equation}
and impose Neumann boundary conditions \[\frac {\partial u (t,a)}{\partial x} = \frac {\partial u (t,b)}{\partial x} =0\] at the intervals' ends. The membrane at $x=0$ is characterised by the following pair of transmission conditions: 
\begin{equation} \label{transmit}
\frac {\partial u (t,0+)}{\partial x} = \frac {\partial u (t,0-)}{\partial x}, \quad \kappa \frac {\partial u (t,0+)}{\partial x}  = \beta u(t,0+) - \alpha u (t,0-),
 \end{equation}
where $\alpha $ and $\beta $ are positive parameters to be described below. To explain the meaning of these conditions, we interpret $u$ as a distribution of temperature throughout the rod, and 
introduce the total sum of temperatures at the right part of the rod:
$v_+(t) = \int_0^b u(t,x) \ud x $. Then 
\[ \frac {\ud v_+ (t)}{\ud t} =  \kappa \int_0^b \frac {\partial^2 u (t,x)}{\partial x^2}\ud x = - \kappa \frac {\partial u (t,0+)}{\partial x} . \]
Since a similar calculation shows that, for $v_-(t) = \int_{a}^0 u(t,x)\ud x$, we have
\[ \frac {\ud v_-(t)}{\ud t} = \kappa \frac {\partial u (t,0-)}{\partial x} , \]
the first equation in \eqref{transmit} transpires to be a balance condition: the amount of heat lost or gained by one part of the rod is the amount of heat gained or lost by the  other.

Furthermore, assume for a moment that $\alpha $ in \eqref{transmit} is zero. Then the second equation there becomes all-familiar Robin boundary condition for diffusion on $(0,b)$ with partial heat loss at $x=0$. Then, both conditions combined can be interpreted by saying that some particles diffusing in $(0,b)$ may permeate through the membrane and thus transfer heat from the right interval to the left. The larger is $\beta$, the larger is the heat loss at the membrane (as seen from the perspective of the right interval), and thus the larger is in fact the heat transfer from the right to the left. Hence, $\beta$ is a permeability coefficient for the membrane and describes the possibility for particles to filter through the membrane from the right interval to the left interval. An analogous statement is true about $\alpha$: it characterises permeability of the membrane when approached from the left.

The main point, though, is that as $\kappa \to  \infty$, temperatures at both parts of the rod will average out, so that in the limit, $u(t,0+) $ and $u(t,0-)$ may be replaced by $b^{-1} v_+(t)$ and $|a|^{-1} v_-(t)$, respectively, yielding
\begin{align} 
\frac {\ud v_- (t)}{\ud t} &= - \alpha |a|^{-1} v_-(t) + \beta b^{-1} v_+(t), 
\label{ukladzik} \\
\frac {\ud v_+ (t)}{\ud t} &= \phantom{-} \alpha |a|^{-1} v_-(t)-  \beta b^{-1} v_+(t). 
\nonumber
\end{align}
Remarkably, these equations describe transient probabilities  of a Markov chain with two states, say $-$ and $+$; this chain starting at state $+$ spends an exponential time there with parameter $\beta b^{-1}$, and then jumps to the state $-$. While at state $-$, it forgets its past and waits for independent exponential time with parameter $\alpha |a|^{-1}$ before jumping to $+,$ and so on. Quantity $v_+ (t)$ is then the probability that at time $t\ge0$ the chain is at state $+$, and $v_-(t)$ is the probability that it is at state $-$. 

Our analysis illustrates that speeding up diffusion in two intervals while decreasing permeability of the membrane in such a way that the flux through the membrane remains constant, leads to the limit in which heat conduction is  modeled by two state Markov chain: `heat' is gathered in two containers and its `particles' may jump `over the membrane' from one container to the other. These intuitions are supported by simulations (see Figure \ref{fig1}). 
\begin{figure}[ht]
\includegraphics[width=0.45 \linewidth]{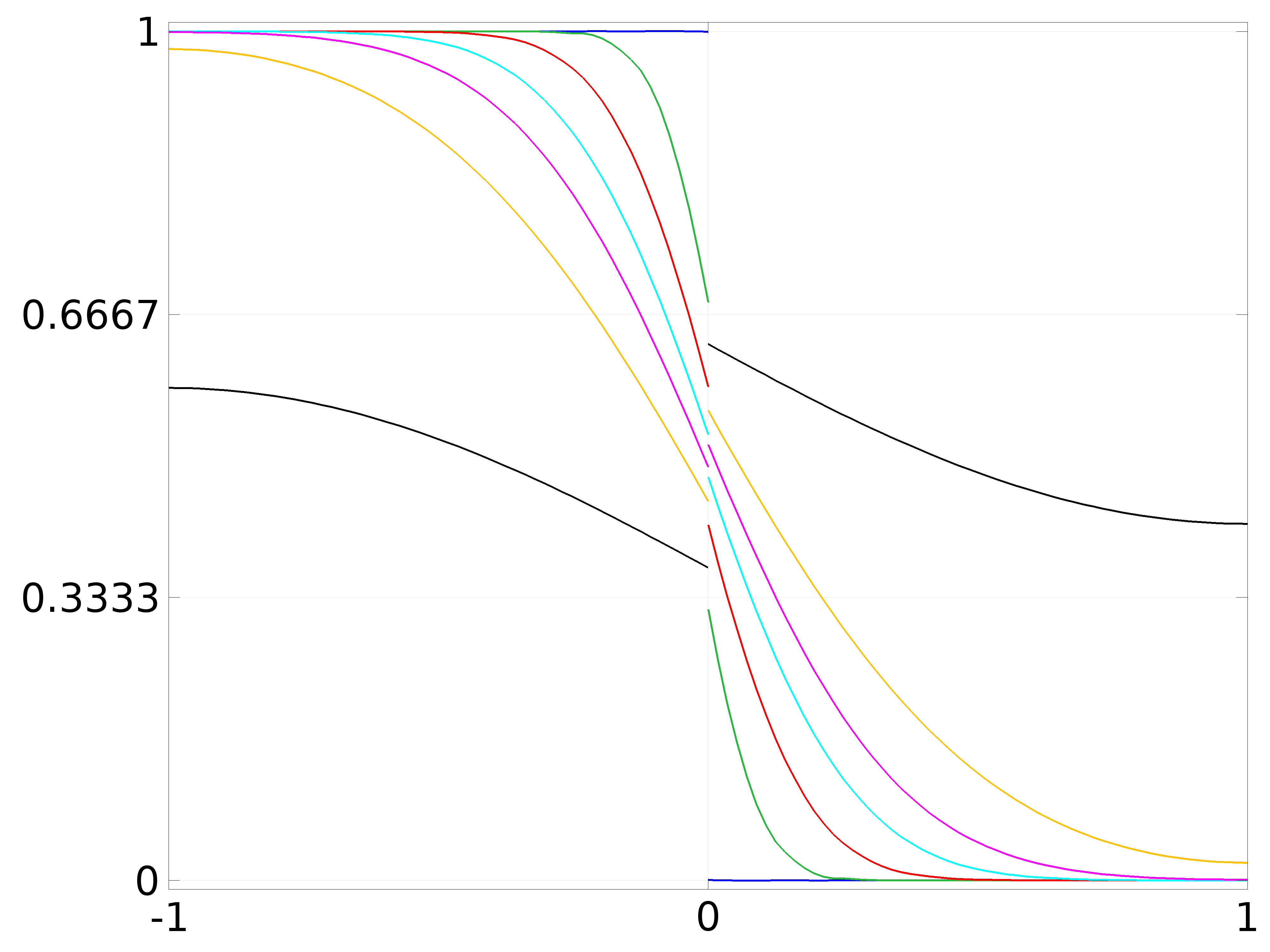}  
\includegraphics[width=0.45 \linewidth]{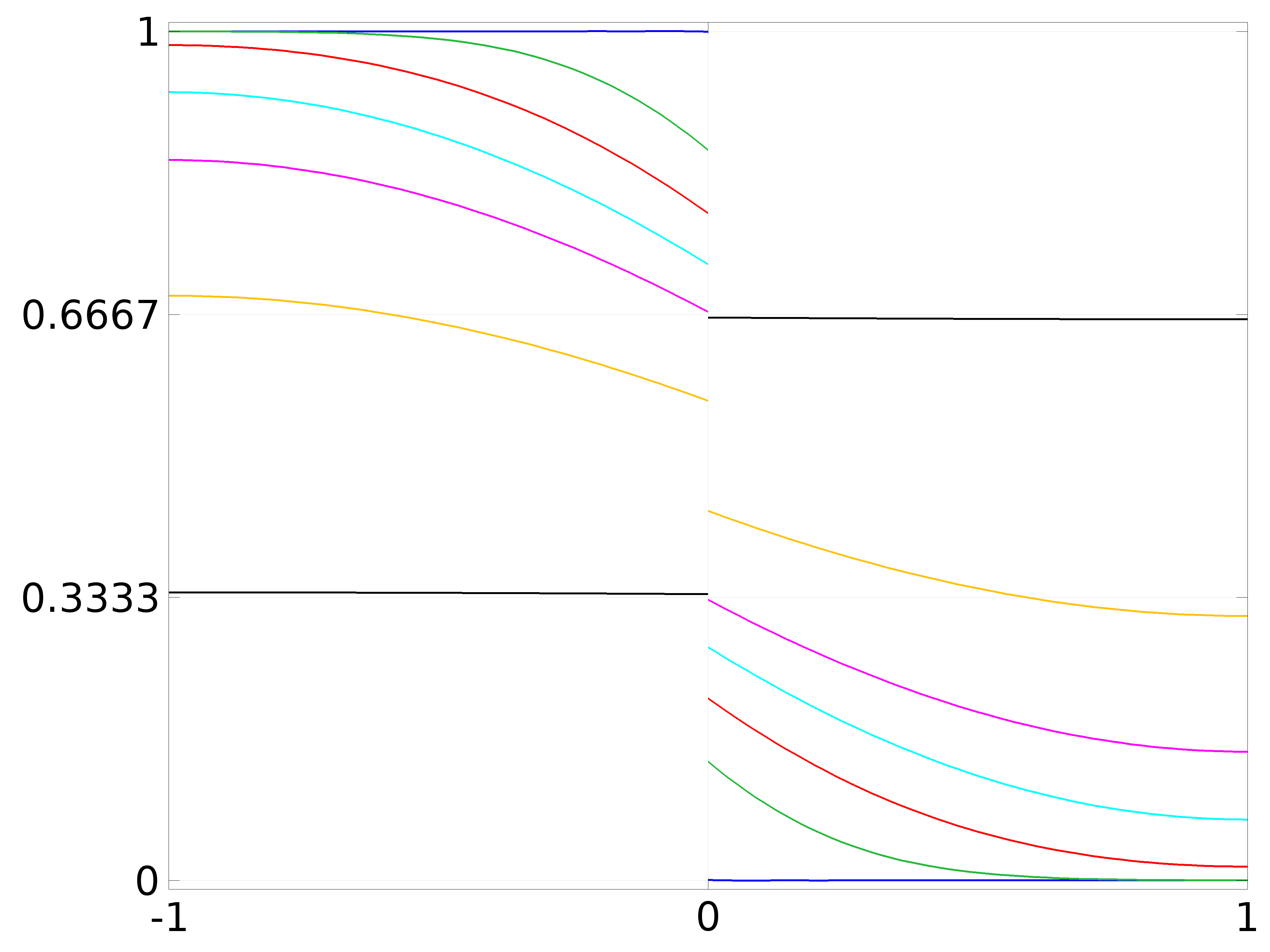} 

\vspace{0.5cm}

\includegraphics[width=0.45 \linewidth]{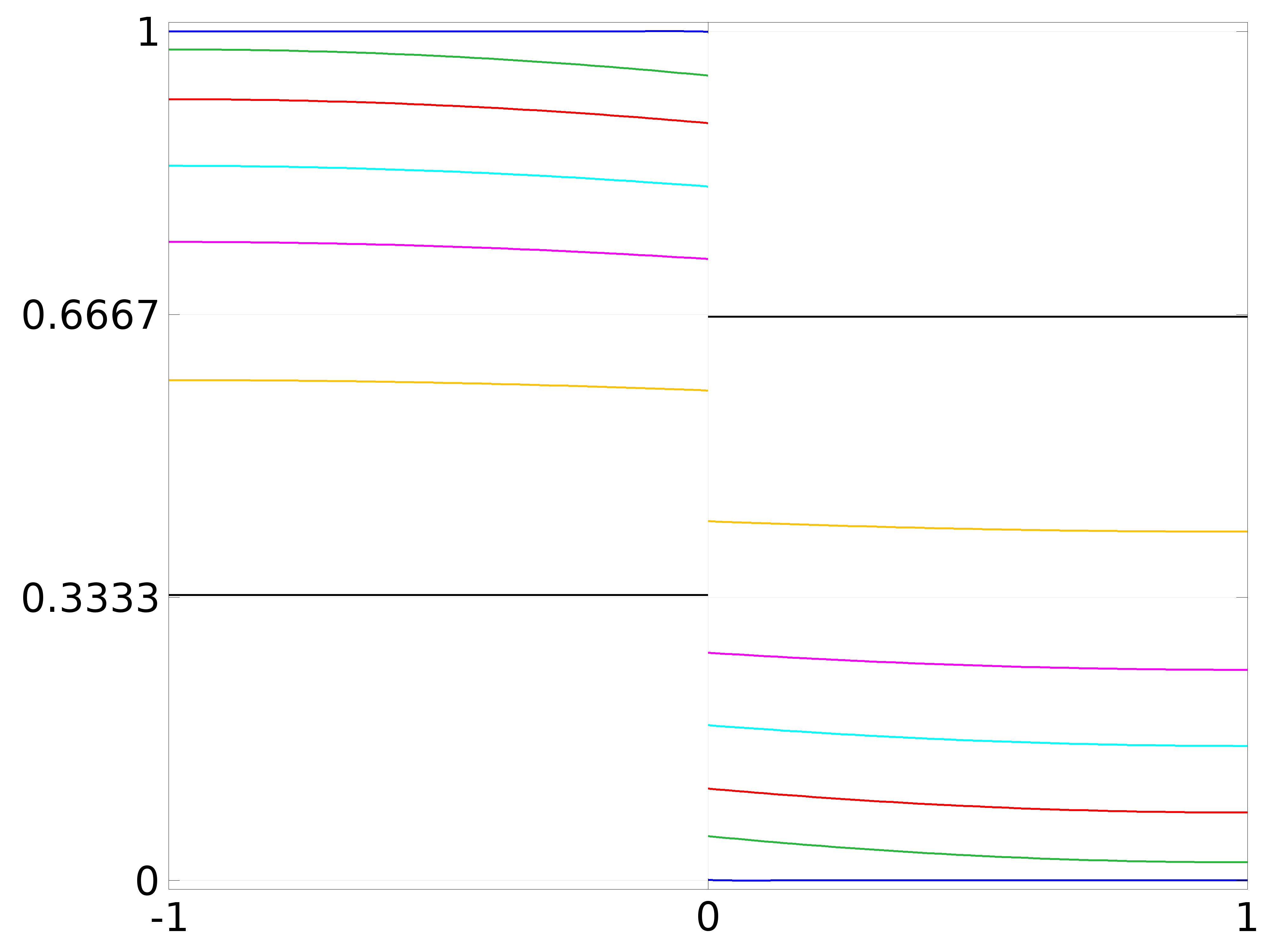} 
\includegraphics[width=0.45 \linewidth , height = 0.33 \linewidth]{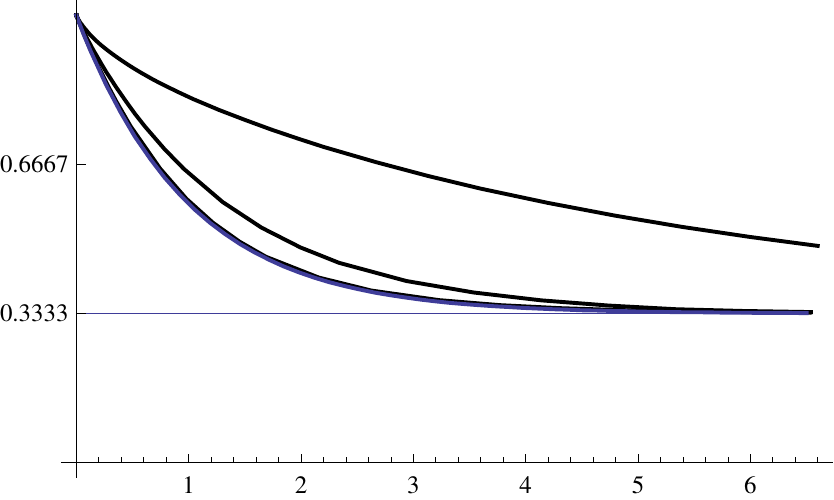} 
\caption{{\footnotesize{
Upper left, upper right and bottom left: Time snapshots of 
 solutions to \eqref{simplenotmodel}--\eqref{transmit} with $a=-1,b=1$,
initial data 
$u_0(x) = 1$ for $x\in (-1,0)$ and $0$ for $x\in (0,1)$,  
 $\alpha=2/3$, $\beta=1/3$, and various values of the diffusion 
coefficient:  $\kappa=0.1$ (upper left),  $\kappa=1$ (upper right) 
and  $\kappa=10$ (bottom left). The curves correspond 
consecutively to times $0, 0.05, 0.15, 0.30, 0.50, 1$ and $6$. ~ 
Bottom right: Time behavior of the function 
$v_-(t)= \int_{-1}^0 u(t,x) \ud x$, i.e., of the probability that a particle diffusing according to the rules \eqref{simplenotmodel}--\eqref{transmit} and starting in the left interval will be there also at time $t$.  The three curves correspond 
to $\kappa=0.1$ (the highest), $\kappa=1$ and $\kappa=10$ (the lowest). 
Within the picture's resolution, the lowest graph coincides with the graph of  
the second coordinate of the solution to \eqref{ukladzik}, i.e., of 
the function $t \mapsto v_{-}(t)$ describing the probability that a particle in the Markov chain related to \eqref{ukladzik}, and starting in the state $-$, will be in that state at $t \ge 0$. 
}}} \label{fig1}
\end{figure}

The main goal of this paper are general theorems describing such limit results (see Theorems \ref{t.l2conv} and \ref{t.lpconv}). In these theorems, intervals are replaced by $d$-dimensional adjacent regions with adequately smooth boundaries (Lipschitz continuity suffices) playing the role of semi-permeable membranes (see Figure \ref{figure1}). Analysis of partial differential equations in dimensions $d
\ge 2$ is technically more demanding than that in $d=1$ (the case just shortly described and discussed in more detail in our previous papers), but the idea is quite the same: We consider  a diffusion process in such adjacent regions, and assume that particles may filter through the membranes from one region to another. As it transpires, if diffusion in each region becomes faster and faster, and at the same time permeability of membranes separating regions diminishes in such a way that fluxes through these membranes remain constant, then the diffusion process so described is well approximated by a Markov chain. 

What happens, figuratively speaking, is that diffusion tries to average everything out, but in practice, ultimately, this averaging takes place in each region separately, since the membranes, being less and less permeable, in the limit become reflecting barriers. Hence, all diffusion can do is to make all points of each region identical, indistinguishable. On the other hand, since the flux through each of the membranes is kept constant along the process of increasing the speed of diffusion, in the limit process there is still some kind of communication, a heat or probability mass exchange between the lumped regions, and interestingly, this communication is that characteristic to a Markov chain. 

\begin{rem} Stochastic analysis allows a deeper insight into the way particles filter through the membrane, under transmission conditions \eqref{transmit}. Each of them, starting in the interval $(0,b)$, performs a Brownian motion in this interval with two reflecting barriers at $x=0$ and $x=b$. However, while times of reflections at $x=b$ are soon forgotten, those at $x=0$ are measured by a highly nontrivial, nondecreasing process, called local time (see e.g. \cite{ito,liggett,revuz}). When an exponential time with parameter $\kappa^{-1} \beta $ with respect to this local time elapses, a particle filters through the membrane, and starts performing a reflected Brownian motion on the other side. (In particular, the larger the $\beta $, the shorter the time `at the membrane' before filtering to its other side, confirming the interpretation of $\beta$ as a permeability coefficient.)  This agrees nicely with the description of the limit process, in which a particle in the right `container' stays there for an exponential time with parameter $\beta b^{-1}$. Hence, from the viewpoint of stochastic analysis, our result says that as $\kappa \to \infty$,  the local time, i.e., the time the process spends at the membrane, divided by $\kappa$, becomes the standard time divided by $b$. \end{rem}

\subsection{An informal introduction to mathematical tools}

A short discussion of mathematical tools used to accomplish our goal is now in order. As we have already mentioned, analysis of partial differential equations in $d \ge 2$ dimensions is significantly more involved than in $d=1$. By far the simplest (of several known) approaches is that via sesquilinear forms. To begin with let us note that equations \eqref{simplemodel} and 
\eqref{simplenotmodel} are particular instances of equations of the form 
\begin{equation}\label{niewiemco}  \frac {\partial u (t,x)}{\partial t} = \mc L u (t,x), 
\end{equation}
where $x$ belongs to a certain region in $\mathbb R^d$, and $\mc L$ is a differential operator. In both examples presented above $\mc L$ is the operator of second derivative; we note, however, that boundary and transmission conditions influence the domain of such an operator in a crucial way. For instance, in the first example this domain is composed of twice continuously differentiable functions whose derivatives at the interval ends vanish. In fact, instead of saying that boundary and transmission conditions influence the domain of $\mc L$, it would be more proper to say that these conditions characterise, and are characterised by this domain. 

It is clear from this short discussion that properties of \eqref{niewiemco} hinge on $\mc L$. In recognition of this fact, and to draw an analogue with the case where $\mc L$ may be identified with a number or a finite matrix, it is customary to write solutions to \eqref{niewiemco} with initial value $u_0$ as 
\[ u(t, \cdot ) = \e^{t\mc L} u_0 ,\]
provided existence, uniqueness and continuous dependence on initial data of such solutions is granted. In such notation, the uniqueness of solutions yields 
\[ \e^{(t+s)\mc L} = \e^{t\mc L} \e^{s\mc L}, \qquad s,t\ge 0\]
which makes the analogue even more appealing, and justifies viewing $\e^{t\mc L}$ as an exponential function of $\mc L.$ We note that, though both $\e^{t\mc L}$ and $\mc L$ are operators (in that they both map functions into functions) their nature is quite different: While $\mc L$ may map functions with small norm into functions of arbitrarily large norms, $\e^{t\mc L}$ cannot do that; there is a universal constant bounding the ratio between the norm of the image and the norm of a non-zero argument of $\e^{t \mc L}.$  Nevertheless, the theory of semigroups of operators (see \cite{goldstein,hp,pazy}, for example) shows that all properties of $\mc L$ have their mirror images in the family $(\e^{t\mc L})_{t
\ge 0} $, and vice versa.

In particular, the initial value problem for \eqref{niewiemco} is well-posed for $u_0$ in the domain of $\mc L$ and solutions depend continuously on initial data $u_0$, if and only if $\mc L$ is a generator of a so-called strongly continuous semigroup of operators, which is nothing else but exponential function of $\mc L$, as introduced above. 

The question of whether a given operator $\mc L$ is a generator is in general quite difficult to answer.  As it turns out, it requires
detailed knowledge of the spectrum of the operator $\mc L$, i.e.\ knowledge of solutions of the elliptic equation \begin{equation}\label{elliptic} \lambda u - \mc L u = f,\end{equation} where $f$ is given and $u$ is searched-for, for a number of complex lambdas. In Hilbert spaces,
these equations can be studied using the theory of sesquilinear forms which takes its origin in the variational approach 
to elliptic partial differential equations. 

To explain, let us look once again at the heat equation with Neumann boundary conditions. For ease of notation we set $\kappa = 1$. Here, we have $\mc L u = u''$, the second derivative, and the domain incorporates the Neumann boundary conditions. 
To solve the elliptic equation $\lambda u - u'' = f$, we take the scalar product with a test function $v\in H^1(0,1)$ (the Sobolev space of functions with weak derivatives in $L^2(0,1)$) and integrate by parts:
\begin{align}
\int_0^1 f(x) \bar v(x)\ud x & = \lambda \int_0^1 u(x) \bar v(x) \ud x- \int_0^1 u''(x) \bar v(x) \ud x\notag\\
& = 
\lambda \int_0^1 u(x)\bar v(x) \ud x + \int_0^1 u'(x) \bar{v}'(x)\ud x, \qquad \forall v \in H^1(0,1),\label{eq.variational}
\end{align}
since the boundary terms vanish due to the Neumann boundary conditions $u'(0)=u'(1) = 0$. Thus, if
$u$ solves our elliptic equation, then $u$ solves equation \eqref{eq.variational}. Doing the above computations in reverse,
we see that, conversely, if $u$ satisfies equation \eqref{eq.variational} for all $v\in H^1(0,1)$ then $u$ satisfies the elliptic equation
$\lambda u - u'' = f$ and the Neumann boundary conditions.

Equation \eqref{eq.variational} is called the variational formulation of the problem. It involves the sesquilinear form
\[
\fq [u,v] \coloneqq \int_0^1 u'(x) \bar v'(x)\ud x
\]
associated to the operator $\cL u = u''$ with Neumann boundary conditions. Here $u$ and $v$ belong to the Sobolev space $H^1(0,1)$.
The relation between the operator $\cL $ and the form $\fq$ is that 
\[ \fq [u,v] = - \langle \cL u , v\rangle, \qquad u \in D(
\mc L), v \in H^1(0,1),\]  
where $\langle \cdot \, , \cdot \rangle$ refers to the usual inner product of $L^2(0,1)$:
\(
\langle u,v \rangle \coloneqq  \int_0^1 u(x)\bar v(x)\ud x.
\)

As it turns out, it is relatively easier to study the form $\fq$ than
the operator $u \mapsto u''$ and  basically all the relevant information is contained in the numerical range 
\[
\Theta (\fq) \coloneqq \{ \fq [u,u] : u \in H^1(0,1) , \|u\|_{L^2(0,1)} =1\}
\]
of the form. The numerical range in turn involves the associated quadratic form $\fq [u] \coloneqq \fq [u,u]$. In the example above, 
we have
\[
\fq [u] = \int_0^1 |u'(x)|^2\ud x,
\]
which can be interpreted as the (kinetic) energy of the `state' $u$. For symmetric forms, i.e.,\ if $\fq [u,v]= \overline {\fq [v,u]}$, the quadratic form can often be interpreted as an energy. 

One should think of quadratic forms as of distant cousins of the quadratic form $\langle Ax, x\rangle = \bar x^\mathsf{T} A x$ associated with a square   
matrix $A$. (If $A$ is $n\times n$, then the form is defined in $\mathbb C^n$.) Note that $A$ is Hermitian, i.e., $A^* = A$, if and only if the quadratic form takes only real values, and that this is a condition on the numerical range. There are other conditions on the numerical range which frequently appear in connection with matrices, for example:  that the numerical range is contained in $(0,\infty)$, resp.\ $(-\infty, 0)$, which are equivalent with the matrix being
positive definite, resp.\ negative definite. The main point is that analogous conditions on forms related to differential operators may be employed in studying solvability of \eqref{elliptic}.

The quadratic forms appearing in the main theorem of this paper are not symmetric, whence the numerical range 
is not contained in the real axis, but it is a subset of the complex plane. As we will see in the next section, the right condition to get
the generator of a semigroup is for a  form to have numerical range in a sector around the real axis.

\subsection{On the adjoint} 
For a form $\fq$ we define its adjoint by $\fq^*[u,v] \coloneqq \overline{ \fq [v,u]} $. In the case considered in this paper, both $\fq$ and $\fq^*$ are related to operators that transpire to be generators of semigroups of operators. These generators, and these semigroups, describe the same stochastic process seen from two different viewpoints, but are related to apparently different transmission conditions; one semigroup provides solutions of the backward Kolmogorov equation, the other provides solutions of the forward Kolmogorov equation. We will explain this point by taking the system \eqref{simplenotmodel}--\eqref{transmit} as a case study. 

\newcommand{\elka}{\mbox{$L^1(a,b)$}}
\newcommand{\elkad}{\mbox{$L^2(a,b)$}}
As already remarked, solutions of the system \eqref{simplemodel}--\eqref{transmit} may be interpreted as densities of temperature distribution. Alternatively, if $u(0,\cdot )$ is a probability distribution density of an initial position of a diffusing particle then $u(t,\cdot)$ is the probability distribution density of this position at time $t$. Hence, the natural space for this system is that of (absolutely) integrable functions on $(a,b)$, denoted $\elka$. In this space the operator $\mc L^*$ governing the whole dynamics is that of the second derivative multiplied by $
\kappa$ (we need to denote this operator by $\mc L^*$ and not by $\mc L$ to comply with the notation in the main body of the text). Importantly, its domain is composed of  members $u$ of \elka \ possessing weak  second derivatives in each of the two subintervals $(a,0)$ and $(0,b)$ that belong to $L^1(a,0)$ and $L^1(0,b)$, respectively, and satisfying transmission and boundary conditions (comp. \eqref{transmit}):
\begin{equation} \label{transmitki}
u '(0+) = u '(0-), \quad \kappa u' (0+)  = \beta u(0+) - \alpha u (0-), \qquad u'(a)=u'(b)=0.
 \end{equation}  

The semigroup generated by $\mc L^*$ operates in \elka , but it turns out that its restriction to the smaller space $\elkad $ is also a semigroup there. (By the way, in the situation considered in this paper and in general quite often it is relatively easier to construct this semigroup in \elkad \, first, and then extend it to \elka . It is the simplicity of the example that allows a direct reasoning -- see also \cite{gregosiewicz}.) 

The form related to $\mc L^*$ (as restricted to \elkad ) is defined on $H^1\subset \elkad$,  the Sobolev-type space of functions having weak derivates on each of the two subintervals $(a,0)$ and $(0,b)$ that belong to $L^2(a,0)$ and $L^2(0,b)$, respectively. Taking a $u$ from the domain of $\mc L^*$ and $v \in H^1$, and integrating by parts we see that
\begin{align*}
-\langle \mc L^* u, v \rangle &= - \kappa \int_{a}^0 u''(x) \overline v(x)\ud x - \kappa \int_0^{b} u''(x) \overline v(x)\ud x  \\& = 
- u'(x) \overline v(x) |_a^0  - u'(x) \overline v(x) |_0^b + \kappa \int_a^b u'(x)\overline v'(x) 
\ud x .  
\end{align*}
Using \eqref{transmitki} we obtain
\( -\langle \mc L^* u, v \rangle  = \fq^* [u,v] \) 
where 
\[ \fq^* [u,v] = (\beta u(0+) - \alpha u(0-)) \overline {(v(0+)- v(0-))} +  \kappa \int_a^b u'(x)\overline v'(x) 
\ud x .   \] 
It follows that 
\[ \fq [u,v] =  {(u(0+)- u(0-))} \overline {(\beta v(0+) - \alpha v(0-))} +  \kappa \int_a^b u'(x)\overline v'(x) 
\ud x .   \] 
Integrating by parts we see that 
\[ \fq [u,v] = -  \langle \kappa u'',v \rangle \]
for any $v \in H^1$ provided that $u$, instead of satisfying \eqref{transmitki}, satisfies 
the following \emph{dual} conditions \begin{equation} \label{transmitkii} u'(0+) = \beta u(0+) - \beta u(0-), \ u'(0-) = \alpha u(0+) - \alpha u(0-), \  u'(a)=u'(b)=0 .\end{equation}
In other words, the operator $\mc L$ related to $\fq $ is also that of the second derivative, but with different domain.

To repeat, even though $\mc L$ and $\mc L^*$ are related to different transmission conditions, the semigroups generated by these operators describe the same stochastic process, say $(X_t)_{t\ge 0}$. While the semigroup generated by $\mc L^*$  describes dynamics of densities of the process, that generated by $\mc L$ describes dynamics of weighted conditional expected values. More precisely, 
for $u \in \elka $, 
\[ \e^{t\mc L} u (x) = E_x u(X_t) \] where $E_x$ denotes expectation conditional on the process starting at $x$. 

In other words, $\mc L^*$ and conditions \eqref{transmitki} are related to the Fokker-Planck equation, known also as Kolmogorov forward equation, while $\mc L$ and conditions  \eqref{transmitkii} are related to the Kolmogorov backward equation for the same process.



\section{Mathematical preliminaries}\label{sec:prelim}
The main tools to prove our averaging principle come from the theory of sectorial forms. In this section we briefly recall the relevant definitions and results. For more information we refer to the books by Kato \cite{kato} and Ouhabaz \cite{ouh05}.

Let $H$ be a Hilbert space. A \emph{sesquilinear form} is a mapping $\fa : D(\fa)\times D(\fa) \to \CC$ which is linear in the first component and antilinear in the second component. Here $D(\fa)$ is a subspace of $H$. If $D(\fa)$ is dense in $H$ we say that $\fa$ is \emph{densely defined}. We write $\fa [u] \coloneqq \fa [u,u]$ for the associated \emph{quadratic form} at $u\in D(\fa)$.

A sesquilinear form $\fa$ is called \emph{sectorial} if the \emph{numerical range}
\[
\Theta (\fa) \coloneqq \big\{ \fa [u] : u \in D(\fa), \|u\|_H \leq 1\big\}
\]
is contained in some sector
\[
\Sigma_\gamma(\theta) \coloneqq \{ z \in \CC : |\arg (z- \gamma)| \leq \theta \}.
\]
Here $\theta \in [0,\frac{\pi}{2})$ is called the \emph{angle} of the sector and $\gamma \in \CR$ is the \emph{vertex} of the sector.
To emphasize $\theta$ we will say $\fa$ \emph{is sectorial of angle $\theta$}.
If $\Re \fa [u] \geq 0$ for all $u\in D(\fa)$ then $\fa$ is called \emph{accretive}. The numerical range of an accretive, sectorial form is always contained in a sector with vertex $0$. We will mainly be interested in accretive forms. 
If the numerical range of a form is contained in $\Sigma_\gamma (0)$, the form is called \emph{symmetric}.

The \emph{adjoint form} of $\fa$ is defined as $\fa^* : D(\fa)\times D(\fa) \to \CC$, $\fa^*[u,v] \coloneqq \overline{ \fa [v,u]}$. If
$\fa$ is sectorial (accretive, symmetric) then so is $\fa^*$. The \emph{real} and \emph{imaginary parts} of $\fa$
are defined by $\Re \fa \coloneqq \frac{1}{2}(\fa + \fa^*)$ and $\Im \fa \coloneqq \frac{1}{2i} (\fa - \fa^*)$, respectively. An easy computation shows that $\Re\fa$ and $\Im\fa$ are symmetric forms and that $\fa[u,v] \coloneqq \Re\fa [u,v] + i\Im \fa [u,v]$.
We point out that even if the quadratic forms associated with $\Re\fa$ and $\Im \fa$ take only values in the real numbers,
the forms themselves may take values in the complex plane. It is easy to see that an accretive sesquilinear form is sectorial of angle 
$\theta$ if and only if
\[
|\Im \fa [u] | \leq \tan\theta \Re\fa [u].
\]
In this case
\[
\la u,v\ra_{\fa} \coloneqq \Re\fa[u,v] + \la u,v\ra_H
\]
defines an inner product on $D(\fa)$. This is called the \emph{associated inner product}. If this inner product turns $D(\fa)$ into a Hilbert space, $\fa$ is called \emph{closed}.\medskip
  
If $\fa$ is a densely defined sesquilinear form, we define the \emph{associated operator} $\cL$ by setting
\[
D(\cL) \coloneqq \{ u \in D(\fa) : \exists w \in H \mbox{ with } \fa [u,v] = \la w,v\ra \mbox{ for all } v \in D(\fa)\},
\quad \cL u = - w.
\]
Note that by the density of $D(\fa)$ in $H$ there exists at most one such $w$. We now have the following result, see \cite[Theorem VI.2.1]{kato} or \cite[Section 1.4]{ouh05}

\begin{thm}\label{t.assop}
Let $\fa$ be an accretive, closed and densely defined 
sectorial form of angle $\theta$. 
Then the associated operator $\cL$ is closed
and densely defined, $\CC \setminus \overline{\Sigma_0(\theta)}$ is contained in 
the resolvent set $\rho (\cL)$ and
\[
\|R(\lambda, \cL ) \| \leq \frac{1}{\mathrm{dist}(\lambda, \Sigma_0(\theta))} \qquad
\mbox{for all } \lambda\in \CC \setminus \overline{\Sigma_0(\theta)}.
\]
In particular, $\cL$ is sectorial of angle $\frac{\pi}{2} - \theta$ in the sense of \cite[Definition II.4.1]{en}
and thus generates a bounded, analytic semigroup of operators in $H$ by \cite[Theorem II.4.6]{en}. This semigroup is contractive
on $[0,\infty)$.
\end{thm}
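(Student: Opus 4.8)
The plan is to run the classical form argument of \cite[Theorem~VI.2.1]{kato} and \cite[Section~1.4]{ouh05}: solve the resolvent equation $\lambda u - \cL u = f$ variationally by the Lax--Milgram lemma, read off the resolvent bound by testing the equation against its own solution, and then invoke \cite{en} to pass from the sectorial resolvent estimate to a bounded analytic semigroup. First I would introduce the form Hilbert space $V := D(\fa)$, equipped with the associated inner product $\la\cdot,\cdot\ra_\fa$: closedness of $\fa$ says precisely that $V$ is complete, accretivity gives $\|u\|_H \le \|u\|_\fa$ so that the inclusion $V \hookrightarrow H$ is bounded, and it is dense because $\fa$ is densely defined. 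For $\lambda \in \CC$ set $b_\lambda[u,v] := \lambda \la u,v\ra_H + \fa[u,v]$ on $V \times V$. This form is bounded on $V \times V$: the term $\lambda\la\cdot,\cdot\ra_H$ is dominated by $|\lambda|\,\|u\|_\fa\|v\|_\fa$, while $\fa$ is bounded on $V$ because on a complex Hilbert space a sesquilinear form with numerical range in a sector has bounded associated quadratic form --- here $|\fa[u]| \le (1 + \tan\theta)\Re\fa[u] \le (1+\tan\theta)\|u\|_\fa^2$, using accretivity together with $|\Im\fa[u]| \le \tan\theta\,\Re\fa[u]$ --- and hence, by polarization, is itself bounded.

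Next I would check that $b_\lambda$ is $V$-coercive whenever $\Re\lambda > 0$: since $\Re\fa[u] = \|u\|_\fa^2 - \|u\|_H^2$, one gets $\Re b_\lambda[u] = \|u\|_\fa^2 + (\Re\lambda - 1)\|u\|_H^2 \ge \min\{1,\Re\lambda\}\,\|u\|_\fa^2$. The Lax--Milgram lemma then produces, for each $f \in H$, a unique $u \in V$ with $b_\lambda[u,v] = \la f,v\ra_H$ for all $v \in V$; rewriting this identity as $\fa[u,v] = \la f - \lambda u, v\ra_H$ exhibits $u \in D(\cL)$ with $(\lambda - \cL)u = f$, and coercivity gives injectivity of $\lambda - \cL$, so $\{\Re\lambda > 0\} \subseteq \rho(\cL)$; in particular $\rho(\cL) \neq \emptyset$, whence $\cL$ is closed. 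For the quantitative estimate, take any $\lambda \in \rho(\cL)$, put $u = R(\lambda,\cL)f$, and test the identity $\lambda\la u,v\ra_H + \fa[u,v] = \la f,v\ra_H$ with $v = u$: this yields $\|u\|_H^2\bigl(\lambda + \fa[u]/\|u\|_H^2\bigr) = \la f,u\ra_H$, and since $\fa[u]/\|u\|_H^2 \in \Theta(\fa) \subseteq \Sigma_0(\theta)$, the modulus of the left-hand side is at least $\|u\|_H^2\,\mathrm{dist}(\lambda,\Sigma_0(\theta))$, so that $\|R(\lambda,\cL)\| \le 1/\mathrm{dist}(\lambda,\Sigma_0(\theta))$ for every $\lambda \in \rho(\cL) \setminus \overline{\Sigma_0(\theta)}$.

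It then remains to propagate membership in $\rho(\cL)$ from the right half-plane onto all of $U := \CC \setminus \overline{\Sigma_0(\theta)}$, to verify density of $D(\cL)$, and to assemble the conclusion. The set $U$ is open and connected (the closed sector $\Sigma_0(\theta)$ lies in a half-plane), and the set $S$ of those $\lambda \in U$ for which $\lambda \in \rho(\cL)$ and $\|R(\lambda,\cL)\| \le 1/\mathrm{dist}(\lambda,\Sigma_0(\theta))$ is nonempty (it contains $\{\Re\lambda > 0\} \cap U$), open (on each ball $B(\lambda_0,\mathrm{dist}(\lambda_0,\Sigma_0(\theta)))$ with $\lambda_0 \in S$ the resolvent is given by its Neumann series, this ball lies in $U$, and on it the bound again follows from the test-function argument), and closed in $U$ (near any point of $U$ the distance to $\Sigma_0(\theta)$ is bounded below, so $\|R(\cdot,\cL)\|$ stays locally bounded and $\rho(\cL)$ is preserved under limits inside $U$); by connectedness $S = U$. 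Density of $D(\cL) = R(1,\cL)H$ in $H$ follows because its $H$-orthogonal complement is annihilated by $R(1,\cL)^* = R(1,\cL^*)$, which is injective: $\cL^*$ is (as is well known) the operator associated with $\fa^*$, a form which has the same domain and real part as $\fa$ and is therefore again accretive, closed, densely defined and sectorial of angle $\theta$, so $1 \in \rho(\cL^*)$. Finally, from $\Re\la\cL u, u\ra_H = -\Re\fa[u] \le 0$ the operator $\cL$ is dissipative, so by Lumer--Phillips it generates a strongly continuous semigroup of contractions, and the sectorial resolvent bound just obtained places $\cL$ in the class of \cite[Definition~II.4.1]{en}, whence \cite[Theorem~II.4.6]{en} upgrades that semigroup to a bounded analytic one. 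The two steps calling for genuine care are the polarization estimate that makes $\fa$ continuous on $V$ and the open--closed bootstrap that carries the resolvent bound off the right half-plane onto all of $U$; everything else is routine, the one point to watch being that the orientation of $\Sigma_0(\theta)$ must be kept consistent with the sign convention $\cL u = -w$ in the definition of the associated operator.
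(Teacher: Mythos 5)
Your overall architecture is exactly the classical proof of \cite[Theorem VI.2.1]{kato} and \cite[Section 1.4]{ouh05} that the paper cites in lieu of a proof: complete $D(\fa)$ under $\la\cdot,\cdot\ra_\fa$, get continuity of $\fa$ on that space from sectoriality via polarization, solve $\lambda u-\cL u=f$ by Lax--Milgram in a half-plane, read the resolvent bound off the numerical range, and bootstrap by a Neumann-series/connectedness argument before invoking \cite{en}. The coercivity computation, the polarization bound $|\fa[u,v]|\le (1+\tan\theta)(\|u\|_\fa^2+\|v\|_\fa^2)$, and the Lumer--Phillips finish are all fine. But there is one genuine error: the inequality ``$|\lambda+\fa[u]/\|u\|_H^2|\ge \mathrm{dist}(\lambda,\Sigma_0(\theta))$'' is false. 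Since $\fa[u]/\|u\|_H^2=:z$ lies in $\Sigma_0(\theta)$, what you actually get is $|\lambda+z|=|\lambda-(-z)|\ge \mathrm{dist}(\lambda,-\Sigma_0(\theta))$; for $\theta=0$, $\lambda=-1$, $z=1$ your inequality would give $0\ge 1$. Consequently your open--closed bootstrap cannot reach all of $U=\CC\setminus\overline{\Sigma_0(\theta)}$: with the convention $\cL u=-w$ the numerical range of $\cL$ sits in $-\Sigma_0(\theta)$, so $U$ (which contains the negative real axis) meets $\sigma(\cL)$ for something as simple as the Neumann Laplacian. The region you can and should obtain is $\CC\setminus\bigl(-\overline{\Sigma_0(\theta)}\bigr)$ with the bound $\|R(\lambda,\cL)\|\le 1/\mathrm{dist}(\lambda,-\Sigma_0(\theta))$; that is also what the ``in particular'' clause needs, since Engel--Nagel sectoriality of angle $\frac{\pi}{2}-\theta$ asks for $\{|\arg\lambda|<\pi-\theta\}\subset\rho(\cL)$. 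The theorem as printed carries the same orientation slip (it is stated for Kato's operator $A=-\cL$ with $\fa[u,v]=\la Au,v\ra$, not for the paper's $\cL$); you flagged precisely this danger in your last sentence but then did not implement the fix. Everything in your argument goes through verbatim after replacing $\Sigma_0(\theta)$ by $-\Sigma_0(\theta)$ in the estimate and in the definition of the target set $U$.

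A secondary, repairable issue is the density argument. You deduce injectivity of $R(1,\cL)^*$ from the identity $R(1,\cL)^*=R(1,\cL^*)$, but forming the Hilbert-space adjoint of the unbounded operator $\cL$ (and hence this identity) already presupposes that $D(\cL)$ is dense, and $\ker R(1,\cL)^*=(\mathrm{ran}\,R(1,\cL))^\perp=D(\cL)^\perp$ is exactly the set you are trying to show is trivial --- as written this is circular. Either identify the bounded operator $R(1,\cL)^*$ with the Lax--Milgram solution operator of the form $\fa^*$ by a direct computation on the variational identity (no adjoint of $\cL$ needed), or use the shorter standard argument: if $h\perp D(\cL)$, put $u=R(1,\cL)h\in D(\cL)$ and compute $0=\la h,u\ra_H=\la (1-\cL)u,u\ra_H=\|u\|_H^2+\fa[u]$; taking real parts and using accretivity gives $u=0$, hence $h=(1-\cL)u=0$.
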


If the form $\fa$ is closed and sectorial but not densely defined, there is no associated operator in $H$. However, we may associate an operator
$\cL|_{H_{0}}$ on the Hilbert space $H_{0} \coloneqq \overline{D(\fa)}^H$. We will also in this situation call
$\cL|_{H_{0}}$ the operator associated with $\fa$. As a consequence of Theorem \ref{t.assop}, 
$\cL_{H_{0}}$ is a sectorial operator on $H_{0}$ and thus generates a bounded, analytic semigroup $e^{t\cL_{H_{0}}}$ on 
$H_{0}$. Following Simon \cite{sim78}, who treated the symmetric case, we extend each operator of the semigroup to $H$ by setting it to 
$0$ on $H_{0}^\perp$. In other words, for $u= u_0+u_1 \in H_{0} \oplus H_{0}^\perp = H$ we define the \emph{(degenerate) semigroup}
$e^{-z\fa}$ by setting
\[
e^{-z\fa}u \coloneqq e^{z\cL_{H_{0}}}u_0
\]
and the pseudoresolvent $(\lambda + \fa)^{-1}$ by setting
\[
(\lambda + \fa)^{-1}u \coloneqq R(\lambda, \cL_{H_{0}}) u_0.
\]
{With slight abuse of notation, we write
\begin{equation}\label{eza}
e^{-z\fa} = e^{z\cL_{H_{\fa}}}P_{H_{0}} \quad\mbox{and}\quad (\lambda + \fa)^{-1} = R(\lambda, \cL_{H_{0}})P_{H_{0}}
\end{equation}
where $P_{H_{0}}$ is the orthogonal projection onto $H_{0}$.}
Note that the relationship between $e^{-z\fa}$ and $(\lambda+\fa)^{-1}$ is the same as in the case where $\fa$ is densely defined. Namely, $e^{-z\fa}$
can be computed from $(\lambda+\fa)^{-1}$ via an appropriate contour integral and, vice versa, $(\lambda+\fa)^{-1}$ is the Laplace transform
of $e^{-z\fa}$. 

As in \cite{sim78} the main motivation to consider forms which are not densely defined are convergence results for
sectorial forms, where non densely defined forms may appear naturally in the limit, even if we consider a sequence of densely defined 
sectorial forms. In fact, this is exactly what happens in our averaging principle. To prove it, we make use of the following 
convergence result due to Ouhabaz \cite{ouh95} which generalizes Simon's theorem \cite{sim78} concerned with symmetric forms.
See also the recent article \cite{bte14} for related results. 

\begin{thm}\label{t.ouhabaz}
Let $\fa_n, n \ge 1$ be a sequence of accretive, closed and uniformly sectorial forms in a Hilbert space $H$. The latter means that 
all the numerical ranges are contained in a common sector $\Sigma_0(\theta)$.
Moreover, assume that
\begin{enumerate}
[(a)]
\item $\Re\fa_n \leq \Re\fa_{n+1}$, i.e.,\ $D(\fa_{n+1}) \subset D(\fa_n)$ and
$\Re \fa_n[u] \leq 
\Re \fa_{n+1}[u]$ for all $u\in D(\fa_{n+1})$;
\item we either have
\begin{enumerate}
[(i)]
\item $\Im \fa_n[u] \leq \Im\fa_{n+1}[u]$ for all $u\in D(\fa_{n+1})$ or
\item $\Im\fa_{n+1}[u] \leq \Im \fa_n[u]$ for all $u\in D(\fa_{n+1})$.
\end{enumerate}
\end{enumerate}
Define $\fa[u]\coloneqq \lim_{n\to\infty}\fa_n[u]$ with domain
\[
D(\fa) \coloneqq \Big\{ u \in \bigcap_{n\in \CN} : \sup_{n\in\CN} \fa_n[u] < \infty\Big\}.
\]
Then $\fa$ is an accretive, closed and sectorial form, and $\fa_n$ converges to $\fa$ in the strong resolvent sense, i.e.,\
\[
(\lambda+ \fa_n)^{-1}u \to (\lambda +\fa)^{-1}u \quad\mbox{as } n\to\infty,
\]
for all $u\in H$ and $\lambda \in \CC\setminus \overline{\Sigma_0(\theta)}$.
\end{thm}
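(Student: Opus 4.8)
The plan is to isolate the symmetric part, dispose of it with Simon's monotone convergence theorem for forms, and then carry the (uniformly bounded) antisymmetric part along using the sector condition. Write $\fb_n \coloneqq \Re\fa_n$. Since the $\fa_n$ are accretive and closed, the $\fb_n$ are nonnegative closed symmetric forms, and hypothesis~(a) is precisely the statement that $(\fb_n)$ is increasing. By Simon's theorem \cite{sim78}, the pointwise limit $\fb[u] \coloneqq \lim_n \fb_n[u]$, on the domain $D(\fb) = \{u \in \bigcap_n D(\fb_n) : \sup_n \fb_n[u] < \infty\}$, is again a nonnegative closed symmetric form; moreover the resolvents $(\lambda + \fb_n)^{-1}$ form (for $\lambda>0$) a decreasing sequence of positive operators, hence converge strongly, and the strong limit is $(\lambda+\fb)^{-1}$. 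For $w \in D(\fb)$ we will use repeatedly that $\fb_m[w] \le \fb_n[w]$ for $n \ge m$ and $\fb_m[w] \uparrow \fb[w]$.

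Next I construct $\fa$ and verify its basic properties. For $u \in D(\fb)$ the sector condition gives $|\Im\fa_n[u]| \le \tan\theta\, \fb_n[u] \le \tan\theta\, \fb[u]$, so the real sequence $(\Im\fa_n[u])_n$ is bounded and, by hypothesis~(b), monotone; hence it converges, and therefore so does $\fa_n[u] = \fb_n[u] + i\Im\fa_n[u]$. Thus $\fa[u] \coloneqq \lim_n \fa_n[u]$ is well defined on $D(\fa) \coloneqq D(\fb)$, and by polarization the sesquilinear forms $\fa_n[\cdot,\cdot]$ converge pointwise to a sesquilinear form $\fa[\cdot,\cdot]$ on $D(\fa)$. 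Passing to the limit in $\Re\fa_n[u] \ge 0$ and in $|\Im\fa_n[u]| \le \tan\theta\,\Re\fa_n[u]$ shows that $\fa$ is accretive and sectorial of angle $\theta$; since $\Re\fa = \fb$, the associated inner product $\la\cdot,\cdot\ra_\fa$ equals $\la\cdot,\cdot\ra_\fb$, under which $D(\fa) = D(\fb)$ is complete, so $\fa$ is closed. One also records $|\fa_n[u,v]| \le (1+\tan\theta)\,\fb_n[u]^{1/2}\fb_n[v]^{1/2}$, i.e.\ that $\fa_n$ is bounded for the $\fb_n$-norm (and likewise $\fa$ for the $\fb$-norm).

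For the strong resolvent convergence, fix $\lambda$ in the indicated region; it suffices to treat $\lambda>0$ real, the general case requiring only notational changes together with the uniform bound $\|(\lambda+\fa_n)^{-1}\| \le \mathrm{dist}(\lambda,\Sigma_0(\theta))^{-1}$ of Theorem~\ref{t.assop}. Set $u_n \coloneqq (\lambda+\fa_n)^{-1}u$, which is characterised by
\[
\lambda\la u_n, w\ra + \fa_n[u_n, w] = \la u, w\ra \qquad \text{for all } w \in D(\fa_n).
\]
Testing with $w = u_n$ and taking real parts gives $\|u_n\| \lesssim \|u\|$ and $\fb_n[u_n] \lesssim \|u\|^2$ uniformly in $n$, so for each fixed $m$ the tail $(u_n)_{n\ge m}$ is bounded in the Hilbert space $(D(\fb_m), \|\cdot\|_{\fb_m})$; a diagonal argument produces a subsequence with $u_n \weak v$ in $H$ and in every $(D(\fb_m), \|\cdot\|_{\fb_m})$, and lower semicontinuity of $\fb_m$ yields $\fb_m[v] \le \liminf_n \fb_n[u_n] \lesssim \|u\|^2$ for every $m$, so $v \in D(\fb) = D(\fa)$. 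To see that $v = (\lambda+\fa)^{-1}u$, fix $w \in D(\fa)$ and split, for $n \ge m$,
\[
\fa_n[u_n, w] = \fa_m[u_n,w] + \big(\fa_n - \fa_m\big)[u_n, w].
\]
The first term converges to $\fa_m[v,w]$ (boundedness of $\fa_m$ on $D(\fb_m)$ and weak convergence of $u_n$ there), and $\fa_m[v,w] \to \fa[v,w]$ as $m \to\infty$; the tail is estimated by the Cauchy--Schwarz inequality applied to the sign-definite symmetric forms $\fb_n-\fb_m\ge0$ and $\Im\fa_n-\Im\fa_m$ (nonnegative in case~(i), nonpositive in case~(ii)), together with $|(\fb_n-\fb_m)[u_n]|+|(\Im\fa_n-\Im\fa_m)[u_n]| \lesssim \fb_n[u_n] \lesssim \|u\|^2$, yielding $|(\fa_n-\fa_m)[u_n,w]| \lesssim \|u\|\big[(\fb[w]-\fb_m[w])^{1/2} + |\Im\fa[w]-\Im\fa_m[w]|^{1/2}\big] \to 0$ upon letting first $n\to\infty$ and then $m\to\infty$. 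Hence $\fa_n[u_n,w]\to\fa[v,w]$, and passing to the limit in the variational identity gives $\lambda\la v,w\ra + \fa[v,w] = \la u,w\ra$ for all $w\in D(\fa)$, i.e.\ $v = (\lambda+\fa)^{-1}u$. As this limit is independent of the subsequence, $u_n \weak (\lambda+\fa)^{-1}u$ for the full sequence; norm convergence then follows from the coercivity estimate $\lambda\|u_n-v\|^2 \le \Re\big(\lambda\la u_n-v,u_n-v\ra + \fa_n[u_n-v,u_n-v]\big)$ by expanding and using the identities $\lambda\la u_n,u_n\ra + \fa_n[u_n,u_n] = \la u,u_n\ra$ and $\lambda\la u_n,v\ra + \fa_n[u_n,v] = \la u,v\ra$ (the variational identity with $w=u_n$, resp.\ $w=v$), together with the cross-term limits $\fa_n[v,u_n],\, \fa_n[v,v] \to \fa[v,v]$ obtained by the same splitting: the right-hand side then tends to $0$.

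The main obstacle is exactly the joint passage to the limit in $\fa_n[u_n,w]$, in which both the index of the form and its first argument vary; the splitting $\fa_n = \fa_m + (\fa_n-\fa_m)$ combined with Cauchy--Schwarz for the \emph{sign-definite} symmetric forms $\fb_n-\fb_m$ and $\Im\fa_n-\Im\fa_m$ is what resolves it, and this is precisely where hypotheses~(a) and~(b) are used in full strength --- the one-sided monotonicity of $\Im\fa_n$ both guarantees that $\lim_n\Im\fa_n[u]$ exists and renders $\Im\fa_n-\Im\fa_m$ sign-definite, so that case~(ii) is treated symmetrically to case~(i) after the obvious sign changes.
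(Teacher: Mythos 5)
Your proof is correct. Note, however, that the paper does not actually prove Theorem~\ref{t.ouhabaz}: it quotes the result from Ouhabaz \cite{ouh95} and adds only the one\-/sentence remark that his argument (which, per the paper, rests on Simon's monotone convergence theorem plus properties of analytic functions) survives the passage to non densely defined forms. Your write\-/up supplies a genuine, self\-/contained proof in place of that citation, and it is consistent with the ingredients the paper names: you invoke Simon's theorem exactly once, to get closedness of the limiting real part (whence closedness of $\fa$ via $\Re\fa[u]=\lim_n\Re\fa_n[u]$ and the completeness of $D(\fa)$ under $\la\cdot,\cdot\ra_{\fa}$), and you use analyticity/uniform sectoriality only to pass from real $\lambda>0$ to all of $\CC\setminus\overline{\Sigma_0(\theta)}$. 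The heart of your argument --- the a priori bounds $\|u_n\|\leq\lambda^{-1}\|u\|$, $\Re\fa_n[u_n]\leq\lambda^{-1}\|u\|^2$, weak compactness in each form domain $(D(\fa_m),\la\cdot,\cdot\ra_{\Re\fa_m})$, lower semicontinuity to place the limit $v$ in $D(\fa)$, and the splitting $\fa_n=\fa_m+(\fa_n-\fa_m)$ with the Cauchy--Schwarz inequality for the sign\-/definite symmetric forms $\Re\fa_n-\Re\fa_m$ and $\Im\fa_n-\Im\fa_m$ --- is a direct variational argument rather than a reduction to the symmetric case, and it is precisely here that hypotheses (a) and (b) enter; it also never uses density of the domains, so the degenerate case requires no separate discussion (the variational identity $\lambda\la u_n,w\ra+\fa_n[u_n,w]=\la u,w\ra$ for $w\in D(\fa_n)$ remains the correct characterisation of $(\lambda+\fa_n)^{-1}u=R(\lambda,\cL_n)P_nu$ because $\la P_nu,w\ra=\la u,w\ra$ for such $w$). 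Two small points you leave implicit but which are routine: the subsequence principle must be applied once more to upgrade the weak convergence $u_n\weak v$ in each $(D(\fa_m),\la\cdot,\cdot\ra_{\Re\fa_m})$ from the diagonal subsequence to the full sequence before running the norm\-/convergence computation, and the weak lower semicontinuity of $\Re\fa_m$ is that of a bounded nonnegative quadratic form on the Hilbert space $(D(\fa_m),\la\cdot,\cdot\ra_{\Re\fa_m})$. Neither affects the validity of the argument.
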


Ouhabaz has proved this theorem only for densely defined forms but inspection of the proof shows that it generalizes also to non densely defined forms. Indeed, {besides properties of analytic functions, the proof only makes use of Simon's monotone convergence theorem \cite{sim78} which is valid also for non densely defined forms.}

An important consequence of Theorem \ref{t.ouhabaz} is the following.

\begin{cor}\label{c.semigroupconv}
In the situation of Theorem \ref{t.ouhabaz} we have $e^{-t\fa_n}u \to e^{-t\fa}u$ as $n\to \infty$ for all $u\in H$ and $t\geq 0$.
\end{cor}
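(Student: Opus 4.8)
The plan is to deduce the semigroup convergence from the resolvent convergence in Theorem \ref{t.ouhabaz} by a standard Trotter--Kato type argument, taking care that the forms here need not be densely defined, so the semigroups $e^{-t\fa_n}$ and $e^{-t\fa}$ are the degenerate semigroups defined in \eqref{eza}. First I would recall that, by Theorem \ref{t.assop} applied to each $\fa_n$ (and to $\fa$, which is accretive, closed and sectorial by Theorem \ref{t.ouhabaz}), the associated operators $\cL_n \coloneqq \cL_{n,H_n}$ on $H_n \coloneqq \overline{D(\fa_n)}$ and $\cL \coloneqq \cL_{H_0}$ on $H_0 \coloneqq \overline{D(\fa)}$ generate bounded analytic semigroups that are contractive on $[0,\infty)$; extended by $0$ on the orthogonal complements, they give the semigroups $e^{-t\fa_n} = e^{t\cL_n}P_{H_n}$ and $e^{-t\fa} = e^{t\cL}P_{H_0}$. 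The crucial uniform bound is $\|e^{-t\fa_n}\| \le 1$ for all $n$ and all $t\ge 0$, which holds because each $e^{t\cL_n}$ is contractive and $P_{H_n}$ is an orthogonal projection.

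The key step is to pass from the strong convergence of the pseudoresolvents $(\lambda+\fa_n)^{-1}u \to (\lambda+\fa)^{-1}u$ (guaranteed by Theorem \ref{t.ouhabaz} for $\lambda$ outside $\overline{\Sigma_0(\theta)}$, in particular for all large real $\lambda$) to the strong convergence of the semigroups. I would fix such a $\lambda>0$ and use the Laplace transform representation: since $e^{-t\fa_n}$ is the degenerate semigroup generated by the sectorial operator $\cL_n$, its Laplace transform is $(\lambda+\fa_n)^{-1}$, and likewise for the limit. One then invokes the Trotter--Kato theorem in the form: if a sequence of equibounded $C_0$-semigroups (here on the fixed space $H$, after the $0$-extension) has resolvents converging strongly at one point $\lambda$ in the common resolvent set, then the semigroups converge strongly, uniformly for $t$ in compact intervals. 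A clean way to organize this without worrying about whether the extended objects form genuine $C_0$-semigroups on all of $H$ is to work with the analytic-semigroup contour integral: write $e^{-t\fa_n} = \frac{1}{2\pi i}\int_\Gamma e^{t\lambda}(\lambda+\fa_n)^{-1}\dlam$ over a suitable sectorial contour $\Gamma$ avoiding $\overline{\Sigma_0(\theta)}$, note the uniform resolvent bound $\|(\lambda+\fa_n)^{-1}\| \le \mathrm{dist}(\lambda,\Sigma_0(\theta))^{-1}$ from Theorem \ref{t.assop} makes the integrand dominated uniformly in $n$ by an integrable function, and apply dominated convergence together with the pointwise resolvent convergence to get $e^{-t\fa_n}u \to e^{-t\fa}u$.

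The main obstacle, and the point needing a little care, is the interchange of limit and integral at $t=0$ and near the boundary of the contour, together with the fact that the forms are not densely defined: the degenerate semigroups are only strongly continuous on $[0,\infty)$ in the weak sense built into the definition, and $e^{-t\fa_n}u$ need not converge to $u$ as $t\to 0$ unless $u\in H_n$. This is why the statement is for fixed $t\ge 0$ rather than uniformly on compacts including $0$, and why I would handle $t=0$ separately if needed --- though in fact at $t=0$ one has $e^{-0\cdot\fa_n}u = P_{H_n}u$ and $P_{H_n}\to P_{H_0}$ strongly is already a consequence of the monotonicity $H_{n+1}\subset H_n$ (from $D(\fa_{n+1})\subset D(\fa_n)$) plus Theorem \ref{t.ouhabaz}. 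For $t>0$ the analyticity and the uniform resolvent estimate make the dominated-convergence argument routine. One should also observe that the restriction on $\lambda$ in Theorem \ref{t.ouhabaz} to $\CC\setminus\overline{\Sigma_0(\theta)}$ is harmless here since the contour $\Gamma$ for the analytic semigroup can be chosen entirely within that set.
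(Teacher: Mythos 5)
Your main argument is exactly the paper's proof: represent the degenerate semigroup $e^{-t\fa_n}$ as a contour integral of the pseudoresolvent over a sectorial contour in $\CC\setminus\overline{\Sigma_0(\theta)}$, use the uniform sectoriality (via Theorem \ref{t.assop}) to get an $n$-independent integrable majorant, and conclude by dominated convergence from the strong resolvent convergence of Theorem \ref{t.ouhabaz}. For $t>0$ this is correct and complete.

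One side remark in your proposal is wrong, though it concerns a point the paper's own proof also glosses over. At $t=0$ you claim $P_{H_n}\to P_{H_0}$ strongly by monotonicity; but a decreasing sequence of orthogonal projections converges strongly to the projection onto $\bigcap_n H_n$, and this intersection can strictly contain $H_0=\overline{D(\fa)}$. (Standard example: $\fa_n[u]=n\int_0^1|u'|^2$ on $D(\fa_n)=H^1(0,1)$, so $H_n=L^2(0,1)$ for all $n$ while $H_0$ consists of the constants; then $e^{-0\cdot\fa_n}u=u\not\to P_{H_0}u$.) So the convergence genuinely fails at $t=0$ in general; the statement should read $t>0$, consistent with how the subsequent theorems in the paper are formulated. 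The contour-integral argument, which only works for $t>0$ anyway, is unaffected.
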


\begin{proof}
To see this note that the degenerate semigroup $e^{-t\fa_n}u$ can be computed from the pseudoresolvent
$(\lambda+ \fa_n)^{-1}u$ via a contour integral. By the strong resolvent convergence the integrands converge pointwise on the contour
to $(\lambda +\fa)^{-1} u$. However, as our forms are uniformly sectorial, it follows from Theorem \ref{t.assop} that the associated operators
are uniformly sectorial. From this, we obtain an integrable majorant for $(\lambda+ \fa_n)^{-1}u$. The thesis now follows from the dominated convergence theorem. See e.g. \cite[Proposition 4]{deg} for details. 
\end{proof}

{The situation where degenerate semigroups (or, equivalently, non densely defined operators) appear in convergence results is quite common in applications and can be studied in more generality in the framework of singular perturbation problems, see 
 \cite{banalachokniga} and \cite{knigazcup}.  We would like to point out that the situation in Corollary \ref{c.semigroupconv}
is rather special in that in general mere convergence of the resolvents does not imply convergence of the related semigroups       
 (see examples in e.g. \cite{deg} or \cite[Chapter 8]{kniga}). What allows us to infer convergence of the semigroups from that of the resolvents is the fact that the related semigroups are uniformly holomorphic. That this is helpful in convergence results has been known for a time (see, e.g., \cite{a01,deg} and the seminal paper \cite{dapratos}).  
 
In the general situation where the semigroups considered are not uniformly holomorphic, more refined techniques are needed to establish convergence of the semigroups. It is worth noticing that many singular perturbation problems  which not necessarily involve uniformly holomorphic semigroups, fall into an ingenious scheme devised by T. G. Kurtz \cite[pp.\ 39-42]{ethier}\cite{kurtzper,kurtzapp}. In fact, relatives of our averaging principle can also be deduced from Kurtz's theorem, see
\cite[Chapter 42]{knigazcup}; the same is essentially true of the Freidlin--Wentzell principle \cite{emergence}.  
}

\section{Notation and Assumptions}
\subsection{Domains and their boundaries}
We let $\Omega_0 \subset \CR^d$ be a connected, bounded open domain with Lipschitz 
boundary. Here, we say that an open set $\Omega \subset \CR^d$ has \emph{Lipschitz boundary} if it is locally the epigraph of a Lipschitz function, see  e.g. \cite[p.\ 111] {agranovicz}. 
More precisely, given $z\in \partial \Omega$ we may find an open neighborhood $V$ of $z$ in $\partial \Omega$ such that there is (a) a cylinder $C=B \times (a,b)$, where $B$ is an open ball in $\CR^{d-1} $ and $(a,b)$  is an open subinterval of $\CR$, and (b) an isomorphism $\mcj $ of $\CR^d$, and (c) a Lipschitz continuous functions $g: B \to \CR $ such that defining $\phi (w,t) = t- g (w)$ for $(w,t) \in C$, we have 
$\Omega \cap C = \mcj\{\phi < 0\}, C \setminus \overline{\Omega} = \mcj \{\phi > 0\},$  and $V = \mcj \{\phi =0\}$. Our domain $\Omega_0$ is further partitioned (see Figure \ref{figure1}), i.e., we consider subsets $\Omega_1, \ldots, 
\Omega_{N-1} \subset \Omega_0$ that are pairwise disjoint and open with Lipschitz boundary. 
We set
\[
\Omega_N \coloneqq \interior \big( \Omega_0 \setminus \bigcup_{k=1}^{N-1}\overline{\Omega}_k \big).
\]
We assume that also $\Omega_N$ is a bounded open set with Lipschitz boundary. This assumption excludes certain configurations of the sets $\Omega_1, \ldots, \Omega_{N-1}$. For example, it may not happen that we have two balls which touch in exactly one point. 
On the other hand, it is no restriction to assume that the sets
$\Omega_1, \ldots, \Omega_N$ are connected, otherwise we consider the connected components of these sets. 
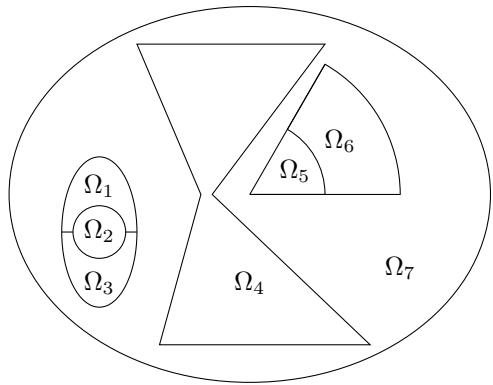
\begin{figure}
\begin{tikzpicture}
   \draw (0:0cm) -- (0:2cm)
         arc (0:60:2cm) -- (60:1cm)
         arc (60:0:1cm) -- (60:0cm)
-- cycle;\draw (0,0) -- (1,1.732);
\draw (0,0) ellipse (3.2cm and 2.5cm);
\draw (-2,-0.5) ellipse (0.5cm and 1 cm)
               ellipse (0.35cm and 0.35cm);
\draw (-0.65,0) -- (-1.2,-2) -- (1.6,-2) -- (-0.5,0) -- (1,2)-- (-1.5,2)--cycle;
\draw (-1.65,-0.5)--(-1.5,-0.5);
\draw (-2.5,-0.5)--(-2.35,-0.5);
\node [above] at (-2,-0.1) {$\Omega_1$};
\node [above] at (-2,-0.7) {$\Omega_2$};
\node [above] at (-2,-1.4) {$\Omega_3$};
\node [above] at (-0,-1.4) {$\Omega_4$};
\node [above] at (2,-1.2) {$\Omega_7$};
\node [above] at (0.6,0.1) {$\Omega_5$};
\node [above] at (1.2,0.5) {$\Omega_6$};
\end{tikzpicture} \caption{Partition of $\Omega_0$ (here, an ellipse in $\CR^2$) into $N=7$ parts.}\label{figure1}\end{figure}

The boundary of the set $\Omega_k$ is denoted by $\Gamma_k$ for $k=0, \ldots, N$. We write 
\[
\Gamma_{k,\ell} \coloneqq \Gamma_k \cap \Gamma_\ell
\]
for the common boundary of $\Omega_k$ and $\Omega_\ell$ ($k,\ell = 0, \ldots, N, k \not = \ell$). To simplify some formulas to be discussed later, we also agree that \[\Gamma_{k,k} = \emptyset \] for $k =1,\ldots, N$. Note that we may also well have 
that $\Gamma_{k,\ell} = \emptyset$ for certain values of $k \not = \ell$. Below, we always endow the boundaries
$\Gamma_k$ with their natural surface measure $\sigma_k$. {Actually} $\sigma_k$ coincides 
with $(d-1)$-dimensional Hausdorff  measure $\mathscr{H}^{d-1}$ 
since all appearing domains have a Lipschitz boundary. As there is no chance of confusion, 
we drop the index $k$ and write $\sigma$ for the surface measure on any of the $\Gamma_k$.

Below, we make use of the following observation; its proof is relegated to Appendix. 

\begin{lem}\label{l.intersect}
If $k, \ell, m$ are distinct numbers between $0$ and $N$, then the $(d-1)$-dimensional Hausdorff measure of 
$\Gamma_k\cap \Gamma_\ell\cap \Gamma_m$ is zero.
\end{lem}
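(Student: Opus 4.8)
The plan is to argue locally, reducing the statement to a one-dimensional fact about Lipschitz graphs via the definition of Lipschitz boundary. Fix three distinct indices $k,\ell,m$ and a point $z \in \Gamma_k \cap \Gamma_\ell \cap \Gamma_m$. First I would use the local epigraph description of $\partial\Omega_k$ at $z$: there is a cylinder $C = B \times (a,b)$, an isomorphism $\mcj$ of $\CR^d$, and a Lipschitz function $g_k \colon B \to \CR$ with $\Omega_k \cap C = \mcj\{\phi_k < 0\}$, $C \setminus \overline{\Omega_k} = \mcj\{\phi_k > 0\}$, where $\phi_k(w,t) = t - g_k(w)$. After applying $\mcj^{-1}$ (which is bi-Lipschitz and hence preserves $\mathscr H^{d-1}$-nullity), I may assume in this chart that $\Gamma_k$ is locally the graph $\{t = g_k(w) : w \in B\}$. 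Shrinking $C$ if necessary, I would likewise obtain local graph representations for $\Gamma_\ell$ and $\Gamma_m$; the subtlety is that the cylinders and isomorphisms coming from the three different boundaries need not match, so I would first pass to a common smaller neighborhood of $z$ and a single coordinate frame, accepting that only $\Gamma_k$ is then literally a graph over $B$, while $\Gamma_\ell, \Gamma_m$ are images of graphs under fixed linear isomorphisms.

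The key geometric observation is a separation-of-sides argument. Near $z$, the three open sets $\Omega_k, \Omega_\ell, \Omega_m$ are pairwise disjoint, so no point of $C$ can lie in two of them. On $\Gamma_{k,\ell} = \Gamma_k \cap \Gamma_\ell$, both $\Omega_k$ and $\Omega_\ell$ sit on definite sides of the common (Lipschitz) hypersurface, and disjointness forces them to lie on \emph{opposite} sides locally; the same holds for every pair. Three pairwise-disjoint open sets cannot all be ``opposite'' to each other along a common piece of a hypersurface of positive $\mathscr H^{d-1}$-measure — intuitively, at a point of $\Gamma_k\cap\Gamma_\ell\cap\Gamma_m$ where all three boundaries have a common approximate tangent plane, the plane divides space into only two half-spaces, which cannot accommodate three mutually disjoint sets all adjacent to it. To make this rigorous I would invoke the Rademacher theorem: $g_k$ is differentiable $\mathscr H^{d-1}$-a.e. on $B$, and likewise the defining functions of $\Gamma_\ell, \Gamma_m$ are differentiable a.e. on the relevant hyperplanes. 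Restricting attention to the set $E \subset \Gamma_k\cap\Gamma_\ell\cap\Gamma_m$ of points that are (i) points of differentiability of all three graph functions and (ii) points of density $1$ of the triple intersection relative to $\Gamma_k$ — if $\mathscr H^{d-1}(\Gamma_k\cap\Gamma_\ell\cap\Gamma_m) > 0$, then $\mathscr H^{d-1}(E) > 0$ — I would derive a contradiction at any point of $E$ by the half-space argument above.

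The cleanest route to the contradiction: at a good point $p \in E$, blow up (rescale) around $p$. The rescaled sets $\Omega_k, \Omega_\ell, \Omega_m$ converge locally to half-spaces $H_k, H_\ell, H_m$ whose bounding hyperplanes are the approximate tangent planes of $\Gamma_k, \Gamma_\ell, \Gamma_m$ at $p$; since all three contain $p$ in their closure and the three sets remain pairwise disjoint in the limit, the three half-spaces $H_k, H_\ell, H_m$ are pairwise disjoint open half-spaces all having $p$ on their boundary. But any two disjoint open half-spaces with a common boundary point must have \emph{parallel} bounding hyperplanes and be the two complementary sides; a third disjoint half-space sharing that boundary point is then impossible. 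This contradiction shows $\mathscr H^{d-1}(\Gamma_k\cap\Gamma_\ell\cap\Gamma_m) = 0$, completing the proof once we observe $\Gamma_k\cap\Gamma_\ell\cap\Gamma_m$ is covered by countably many such chart neighborhoods (it is a subset of the compact set $\Gamma_k$, hence Lindelöf). The main obstacle I anticipate is the bookkeeping in reconciling the three distinct local coordinate systems coming from the three boundaries; I would handle this by never insisting on simultaneous graph form, and instead phrasing everything in terms of approximate tangent planes and density points, which are coordinate-free, so only the $\mathscr H^{d-1}$-null set where some $g$ fails to be differentiable needs to be discarded.
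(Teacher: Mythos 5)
Your argument is sound in its core and takes a genuinely different route from the paper's. The paper's proof is purely topological: starting from a relatively open piece $V$ of $\Gamma_{k,\ell}$ it produces a smaller relatively open $V_0$ and an ambient open set $U\supset V_0$ with $U\setminus V_0\subset \Omega_k\cup\Omega_\ell$, so that the open set $\Omega_m$ (disjoint from $\Omega_k\cup\Omega_\ell$ and unable to fit inside the graph $V_0$) cannot meet $U$ at all; no measure theory or differentiability enters, and, as the paper remarks at the end of the appendix, that argument only needs continuous boundaries. Your proof instead works $\mathscr{H}^{d-1}$-a.e.: Rademacher, together with the fact that Lipschitz graph maps send null sets to null sets, lets you discard the bad set, and at a good point the blow-up of each $\Omega_j$ converges (say in $L^1_{\mathrm{loc}}$ of indicator functions, which is exactly the mode you need in order to pass disjointness to the limit) to an open half-space with the origin on its boundary; two disjoint such half-spaces must be complementary, so a third is impossible. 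This buys you a direct pointwise a.e.\ statement without having to locate relatively open subsets of the triple intersection (which a set of positive $\mathscr{H}^{d-1}$-measure need not contain), at the price of genuinely using Lipschitz regularity.

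One concrete point needs patching: when one of the three indices is $0$ --- a case the lemma includes and which the paper actually uses, since the sums in the definition of $\fb$ run over $\ell\in\mc N_0$ --- the three sets are \emph{not} pairwise disjoint, because $\Omega_0$ contains all the other $\Omega_j$. Your sentence ``the three sets remain pairwise disjoint in the limit'' therefore fails verbatim in that case, and the three-half-space contradiction does not apply as stated. The fix is one line: replace $\Omega_0$ by its exterior $\CR^d\setminus\overline{\Omega_0}$, which is open, disjoint from every $\Omega_j$ with $j\geq 1$, has the same (Lipschitz) boundary $\Gamma_0$, and blows up at a differentiability point to an open half-space; the argument then proceeds unchanged. (Alternatively, keep $\Omega_0$ and observe that its blow-up limit $H_0$ would have to contain both of the complementary half-spaces $H_k$ and $H_\ell$, which is equally impossible for an open half-space with the origin on its boundary.)
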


\subsection{Diffusion and permeability of membranes}\label{dpm} 
Let \begin{equation}\label{en} \mc N = \{1,\dots,N\} \quad \text{ and } \quad \mc N_0 = \{0,1,\dots,N\}.\end{equation}
It is our aim to study diffusion on $\Omega_0$ with the sets $\Gamma_k$ (for $k\in 
\mc N$)
modeling semi-permeable membranes (see below). As far as our diffusion coefficients $A = (a_{ij}) \in L^\infty(\Omega_0; \CR^{d\times d})$ are concerned, we make 
the following assumptions.
\begin{enumerate}
[(i)]
\item They are symmetric, i.e.,\ $a_{ij}= a_{ji}$ for $i,j= 1, \ldots, d$.
\item They are uniformly elliptic, i.e.,\ there exists a constant $\gamma>0$ such that for any vector $\xi \in \CC^d$ we have
\[
\sum_{i,j=1}^d a_{ij}(x) \xi_i\bar\xi_j \geq \gamma \|\xi\|^{2} = \gamma\sum_{j=1}^d |\xi_j|^2
\]
for almost every $x\in \Omega_0$.
\end{enumerate}

The differential operator we are interested in is formally given by
\begin{equation}\label{operatorl}
\mathscr{L} u = \sum_{i,j=1}^d \partial_i (a_{ij}\partial_j u) {-cu}= \div (A\nabla u){-cu},
\end{equation}
where $c \in L^\infty(\Omega_0)$ is a given non-negative function playing the role of a potential.  
To define a suitable realization of $\cL$ in $L^2(\Omega_0)$ we use form methods. The related form is defined on the space $\ha \subset L^2(\Omega_0)$:
\[
\ha 
\coloneqq \{ u\in L^2(\Omega_0) : u|_{\Omega_k} \in H^1(\Omega_k)\,\,
\forall \, k=1, \ldots, N\}.
\]
Obviously, $\ha$ 
is a Hilbert space with respect to the inner product
\[
\langle u, v \rangle_{\ha} \coloneqq \int_{\Omega_0} u {\bar v}\dlam + \sum_{k=1}^N \int_{\Omega_k} \nabla u\cdot{\overline{\nabla v}}\dlam ,
\]
where $\lam $ denotes the $d$-dimensional Lebesgue measure, and $\overline{z}$ is the conjugate of a complex number $z$.  
 
For $u \in \ha$ 
the function
$u|_{\Omega_k}$ has a trace in $L^2(\Gamma_k)$ as $\Omega_k$ has a Lipschitz boundary. We denote the trace of $u|_{\Omega_k}$
by $u_{|k}$. Note that we can have $u_{|k}\neq u_{|\ell}$ on $\Gamma_{k,\ell}$. Thus, 
we should interpret $u_{|k}$ as `the values of $u$ on the boundary $\Gamma_{k,\ell}$ when approached from within $\Omega_k$', whereas $u_{|\ell}$
are `the values on the boundary when approached from within $\Omega_\ell$'.

We imagine that a diffusing particle in any subdomain $\Omega_k$ may permeate through a semi-permeable membrane, i.e., {through} the boundary $\Gamma_{k,\ell}$ separating this subdomain from the neighboring subdomain $\Omega_\ell$. The membrane's permeability may change along the boundary. In particular, the permeability may vary from $\Gamma_{k,\ell}$ to $\Gamma_{k,\ell'}$. This is modeled by permeability functions $\tau_k$ defined on  $\Gamma_k $; we assume $\tau_k \in L^\infty(\Gamma_k; \CR)$ with $\tau_k\geq 0$
almost everywhere. By analogy with the analysis in Section \ref{s.gi}, the value of $\tau_k$ at a point $x$ of the boundary should be thought of as a permeability coefficient of the membrane at this point. Roughly speaking, 
the larger $\tau_k (x)$, the less time it takes on average to permeate through the membrane at $x$, when approaching from within $\Omega_k$ (see also the discussion after \eqref{eq.trans}, further on). {Note that by Lemma \ref{l.intersect}, up to a set of measure zero,
there is only one adjacent set $\Omega_\ell$ to which the particle may permeate.}

Moreover, we are given measurable functions $b_{k,\ell} : \Gamma_{k,\ell} \to [0,1]$ for $1\leq k,\ell\leq N$. The quantity $b_{k,\ell} (x) $ describes the possibility that a particle right after filtering from $\Omega_k$ through the membrane $\Gamma_{k,\ell}$ at a point $x$, instead of starting diffusion in $\Omega_\ell$, will be immediately killed and removed from the state-space. For $b_{k,\ell} (x) =1$ all particles survive filtering through the membrane at this point, for $b_{k,\ell} (x) =0$ none of them does.

To formulate our transmission conditions, we need to define the conormal derivative associated
with $\mathscr{L}$ on the domain $\Omega_k$. We do this in a variational sense.

\begin{defn}
Let $u\in \ha$ 
be such that $\div (A \nabla u)|_{\Omega_k} \in L^2(\Omega_k)$. Then 
there exists a unique function $N_k(u)\in L^2(\Gamma_k,\sigma)$ such that
\[
\int_{\Omega_k} \div (A \nabla u) {\bar v}\dlam + \int_{\Omega_k} (A\nabla u)\cdot {\overline{\nabla v}}\dlam = 
\int_{\Gamma_k} N_k(u) {\bar v_{|_k}}\, \ud \sigma
\]
for all $v \in H^1(\Omega_k)$. We call $N_k(u)$ the \emph{conormal derivative of $u$ on $\Omega_k$}.
\end{defn}

To see that such a function $N_k(u)$ exists, let $\Phi : L^2(\Gamma_k) \to H^1(\Omega_k)$ be a continuous linear mapping such
that $\Phi (g)|_k = g$. We can for example pick $\Phi (g)$ as the unique solution of the Dirichlet problem
\[
\left\{ \begin{array}{rcl}
\div (A\nabla u) & = & 0\\
u|_{\partial\Omega_k} &=& g.
\end{array}
\right.
\]
If $u\in H^1(\Omega_k)$ is such that $\div (A\nabla u) \in L^2(\Omega_k)$ we can consider the map $\varphi_u : L^2(\Gamma_k) \to \CC$
defined by
\[
\varphi_u(g) \coloneqq \int_{\Omega_k} \div (A\nabla u) \overline{\Phi (g)}\dlam + \int_{\Omega_k} 
(A\nabla u) \overline{\nabla \Phi (g)}\, \dlam.
\]
Since $\Phi$ is continuous $\varphi_u$ is a continuous, antilinear functional on $L^2(\Gamma_k)$. Hence it follows from the theorem 
of Riesz--Fr\'echet that there exists a unique element $N_k(u)$ such that
\[
\varphi_u(v) = \int_{\Gamma_k} N_k(u)\bar v\, \ud\sigma.
\]

In the situation where everything is smooth, {it follows from the divergence theorem that}
\[
N_k(u) = \sum_{i,j=1}^d a_{ij}\partial_j u\nu_j
\]
where $\nu=(\nu_1, \ldots, \nu_d)$ is the outer normal to $\Omega_k$.

{Our transmission conditions are:
\begin{equation}\label{eq.trans}
N_k(u) = -\tau_k (u_{|k}-b_{k,\ell}u_{|\ell}) \quad\mbox{ on } \Gamma_{k,\ell}
\qquad \text { for all $k,\ell \in \mc N$};
\end{equation} 
comp. \eqref{transmitkii} (their dual form is presented in \eqref{newtc}, further down).}
Probabilistically, these conditions may be interpreted as follows: a particle diffusing in a region $\Omega_k$ `bounces' from the membrane separating it from $\Omega_\ell $, similarly to the reflected Brownian motion, but the time it spends `at the membrane' is measured, and after an exponential time with respect to this reference measure elapses, the particle filters through to $\Omega_\ell $. The larger the $\tau_k$ at an infinitely small part of the membrane the larger the parameter in the exponential time, and the shorter the time it takes to filter through that part of the membrane. Additionally, as described above, functions $b_{k,\ell}$ describe the possibility that a particle will be killed after filtering through the membrane $\Gamma_{k,\ell}$. 

For each $k \in \mc N$, on the part $\Gamma_{k,0}$ of the outer boundary $\Gamma_0$, we impose the Robin boundary conditions
\begin{equation}\label{eq.robin}
N_k(u)  = -\tau_ku_{|k},  
\end{equation}
and note that this is reduces to \eqref{eq.trans} for $\ell=0$ when agreeing
\[b_{k,0} =0 ,\] 
i.e.,\ that all particles filtering from $\Omega_0$ to its complement are immediately killed, and removed from the state-space.
 

\subsection{The related quadratic form}\label{rqf}
Let us assume that $u\in \ha $ 
is such that
$\div (A\nabla u) \in L^2(\Omega_0)$ and such that the transmission conditions \eqref{eq.trans} and the boundary condition \eqref{eq.robin} are satisfied. Then for a function $v \in \ha$ 
we have
\begin{align*}
 - \int_{\Omega_0}\div (A\nabla u)\bar v \dlam & =  
   -\sum_{k\in \mc N}\int_{\Omega_k} \div (A\nabla u)\bar v\dlam\\
& = \sum_{k\in \mc N} \Big( \int_{\Omega_k} (A\nabla u)\cdot \overline{\nabla v}\dlam - \int_{\Gamma_k} N_k(u) \bar v_{|k}\, \ud \sigma\Big)\\
& = \int_{\Omega_0} (A\nabla u)\cdot \overline{\nabla v}\dlam +\sum_{\substack{ k \in {\mc N}\\ \ell \in {\mc N}_0}}
\int_{\Gamma_{k,l}} \tau_k (u_{|k}-{b_{k,\ell}}u_{|\ell}) \bar v_{|k}\, \ud \sigma. 
\end{align*}
Here the first equality uses the fact that the  $d$-dimensional Lebesgue measure of the set $\Omega_0 \setminus \bigcup_{k=1}^N\Omega_k$ is zero,
the second equality is the {definition of the conormal derivative (or the divergence theorem in the smooth case)}, the third one follows from Lemma \ref{l.intersect}, the transmission conditions \eqref{eq.trans} and the boundary condition \eqref{eq.robin}.

This calculation leads us to define the following forms on the Hilbert space $L^2(\Omega_0)$.
\begin{defn}\label{def.forms}
For a parameter $\kappa \ge 0 $, we define the form $\fq_\kappa$ by setting
\[
\fq_\kappa [u, v] \coloneqq \int_{\Omega_0} \left ( \kappa (A\nabla u)\cdot \overline{\nabla v}+ c u \bar v\right ) \,\dlam,
\]
and the form $\fb$  by
\[
\fb [u,v] \coloneqq \sum_{\substack{k \in \mc N\\\ell\in \mc N_0}} \int_{\Gamma_{k,l}} \tau_k (u_{|k}-{b_{k,\ell}}u_{|\ell}) \bar v_{|k}\, \ud \sigma
\]
for $u,v$ in the common domain $ D(\fb)= D(\fq) \coloneqq \ha $.  We put
$\fa_\kappa \coloneqq \fq_\kappa +\fb$. 
The adjoint $\fb^*$ of $\fb$ is given by
\begin{equation}\label{eq.adjoint}
\fb^* [u,v] = \sum_{\substack{k \in \mc N\\\ell\in \mc N_0}} \int_{\Gamma_{k,l}} \tau_k u_{|k}(\bar v_{|k}-{b_{k,\ell}}\bar v_{|\ell}) \, \ud \sigma.
\end{equation}
Since $\fq$ is symmetric we have $\fa_\kappa^* = \fq_\kappa+\fb^*$.
\end{defn}

\begin{rem}\label{dualnosc}
Rearranging the terms in \eqref{eq.adjoint}, we see that
\[
\fb^*[u,v]= \sum_{\substack{k \in \mc N\\\ell\in \mc N_0}} \int_{\Gamma_{k,l}} (\tau_k u_{|k} - \tau_{\ell} b_{k,\ell}u_{|\ell}) \bar{v}_{|k}\, \ud\sigma.
\]
Repeating the computations from the beginning of this subsection, we conclude that functions in the domain of the adjoint operator
satisfy the following transmission conditions {(compare \eqref{transmit} in our introductory  Section \ref{s.gi}):
\begin{equation}\label{newtc}
N_k(u) = - (\tau_k u_{|k} - \tau_\ell b_{k,\ell} u_{|\ell}) \quad \mbox{on } \Gamma_{k,\ell}\quad\quad \mbox{for all } k, \ell \in \mc N.
\end{equation}
To repeat, both forms describe the same stochastic process. } More specifically, if $S$ is the semigroup to be introduced in the next section, and `generated' by $\fa_\kappa^*$, and if a non-negative $u \in L^2(\Omega)$ is the initial distribution of the 
underlying stochastic process $(X_t)_{t\ge 0} $ in $\Omega_0$, then $S(t) u$ is the distribution of this process at time $t\ge0.$ On the other hand, the semigroup $T$ of the next section, `generated' by $\fa_\kappa$, speaks of the dynamics of expected values: for $u \in L^2(\Omega_0)$,  $T(t) u (x) $ is the expected value of $u (X_t)$ conditional on $X_0 = x, x \in \Omega_0.$ The semigroup $S$ provides solutions of the Fokker-Planck equation, or the Kolmogorov forward equation, while $T$ provides solutions of the Kolmogorov backward equation.

As we shall see in examples of Section \ref{sec:6}, both \eqref{eq.trans} and \eqref{newtc} are used in practice, 
depending on what is the quantity modeled.
  
\end{rem}

\section{Generation Results}\label{sect.gen}

{In this section, we prove that for $\kappa \ge 1$ the operator associated with $\fa_\kappa$ generates a strongly continuous, 
analytic and contractive semigroup on $L^2(\Omega_0)$.  To simplify notation, we write $\fa = \fa_1$ and $\fq = \fq_1$ and 
prove the main results for this form. Note, however, that they also apply to $\fa_\kappa$ for $\kappa \geq 1$, as is seen by changing the diffusion coefficients matrix $A$.} The following proposition is the key step towards a generation result on $L^2(\Omega_0)$.

\begin{prop}\label{p.sectorial}
The forms $\fa$ and $\fa^*$ are closed, accretive and sectorial. 
\end{prop}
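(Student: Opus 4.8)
The strategy is to treat $\fa = \fq + \fb$ (the decomposition of Definition~\ref{def.forms}, with $\fq$ the symmetric elliptic part $\fq_1$ and $\fb$ the membrane form) as an \emph{infinitesimal} form-perturbation of $\fq$, reading off the three properties from the resulting two-sided estimates. Since $\fb^{*}[u] = \overline{\fb[u]}$ — so that $\Re\fb^{*} = \Re\fb$ and $|\fb^{*}[u]| = |\fb[u]|$ — and since $\fq$ is symmetric, every estimate proved for $\fa$ holds verbatim for $\fa^{*} = \fq + \fb^{*}$; it therefore suffices to deal with $\fa$.

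First one records that $\fq$ is symmetric ($A$ is symmetric, $c$ real), accretive ($c \ge 0$, and by uniform ellipticity $\fq[u] \ge \gamma \sum_{k=1}^{N} \|\nabla u\|_{L^{2}(\Omega_{k})}^{2}$), densely defined ($\ha$ contains $\bigoplus_{k} C_{c}^{\infty}(\Omega_{k})$) and closed — the last point because uniform ellipticity together with $0 \le c \in L^{\infty}(\Omega_{0})$ renders the form norm $\bigl(\fq[u] + \|u\|_{L^{2}(\Omega_{0})}^{2}\bigr)^{1/2}$ equivalent to $\|\cdot\|_{\ha}$, and $(\ha, \|\cdot\|_{\ha})$ is complete. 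The heart of the matter is then the estimate
\[
|\fb[u]| \;\le\; \eps\,\fq[u] + C_{\eps}\,\|u\|_{L^{2}(\Omega_{0})}^{2} \qquad (u \in \ha),
\]
valid for \emph{every} $\eps > 0$ with a suitable $C_{\eps} \ge 0$ (hence also for $\fb^{*}$). One first bounds $|\fb[u]|$ — using $b_{k,\ell} \in [0,1]$, the Cauchy--Schwarz and Young inequalities, $\Gamma_{k,\ell} \subseteq \Gamma_{k}$ (and $\Gamma_{k,\ell} \subseteq \Gamma_{\ell}$ for $\ell \in \mc N$), and finiteness of the index set — by $C \sum_{k=1}^{N} \|u_{|k}\|_{L^{2}(\Gamma_{k})}^{2}$; then one invokes, on each Lipschitz domain $\Omega_{k}$, the interpolated trace inequality
\[
\|u_{|k}\|_{L^{2}(\Gamma_{k})}^{2} \;\le\; \delta\,\|\nabla u\|_{L^{2}(\Omega_{k})}^{2} + C_{\delta,k}\,\|u\|_{L^{2}(\Omega_{k})}^{2} \qquad (\delta > 0),
\]
which follows from boundedness of the trace operator $H^{1}(\Omega_{k}) \to L^{2}(\Gamma_{k})$ together with compactness of the embedding $H^{1}(\Omega_{k}) \hookrightarrow L^{2}(\Omega_{k})$ (a standard compactness/contradiction argument of Ehrling type; both ingredients hold because $\Omega_{k}$ has Lipschitz boundary). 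Summing over $k$ and estimating $\sum_{k} \|\nabla u\|_{L^{2}(\Omega_{k})}^{2} \le \gamma^{-1}\fq[u]$ yields the displayed bound. This is the step I expect to be the real obstacle: what is needed is not merely that $\fb$ be $\fq$-bounded but that it be \emph{infinitesimally} so, and it is precisely here that the Lipschitz regularity of the subdomains enters essentially; without the arbitrarily small constant in front of $\fq[u]$ one could neither absorb $\Re\fb$ into $\fq$ nor keep the sectorial angle strictly below $\tfrac{\pi}{2}$.

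With this estimate in hand, fix $\eps \in (0,1)$. Since $\Re\fa = \fq + \Re\fb$ and $\Im\fa = \Im\fb$, we obtain $\Re\fa[u] \ge (1-\eps)\fq[u] - C_{\eps}\|u\|^{2}$ and $|\Im\fa[u]| \le |\fb[u]| \le \eps\,\fq[u] + C_{\eps}\|u\|^{2}$; eliminating $\fq[u]$ and using $\fq[u] \ge 0$ gives $|\Im\fa[u]| \le \tfrac{\eps}{1-\eps}\bigl(\Re\fa[u] + \tfrac{C_{\eps}}{\eps}\|u\|^{2}\bigr)$, so $\Theta(\fa) \subseteq \Sigma_{-C_{\eps}/\eps}\bigl(\arctan\tfrac{\eps}{1-\eps}\bigr)$ with $\arctan\tfrac{\eps}{1-\eps} < \tfrac{\pi}{2}$; hence $\fa$ is sectorial. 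The first inequality, written as $\Re\fa[u] + C_{\eps}\|u\|^{2} \ge (1-\eps)\fq[u] \ge 0$, gives accretivity (strictly, of $\fa + C_{\eps}$; adding a real constant alters neither the form domain, nor closedness, nor the sectorial angle, so this normalization is immaterial for what follows), and it also shows that the $\fa$-form norm is equivalent to $\|\cdot\|_{\ha}$, so that $\ha$ is complete for it and $\fa$ is closed. The same three conclusions for $\fa^{*}$ follow from the identical estimates, as noted at the outset.
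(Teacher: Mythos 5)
Your overall route is the same as the paper's: split $\fa=\fq+\fb$, check that $\fq$ is symmetric, non-negative and closed with form norm equivalent to $\|\cdot\|_{\ha}$, show that $\fb$ is $\fq$-form-bounded with relative bound $0$, and read off closedness and sectoriality from the resulting estimates (the paper invokes Kato's Theorem VI.1.33 at this point; your direct derivation of the sector is an equivalent way of finishing, and your observation that $\fb^*[u]=\overline{\fb[u]}$ disposes of $\fa^*$ just as the paper does). The one step whose justification does not go through as written is the interpolated trace inequality $\|u_{|k}\|_{L^2(\Gamma_k)}^2\le\delta\|\nabla u\|_{L^2(\Omega_k)}^2+C_{\delta,k}\|u\|_{L^2(\Omega_k)}^2$. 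You derive it from boundedness of the trace operator $H^1(\Omega_k)\to L^2(\Gamma_k)$ together with compactness of the embedding $H^1(\Omega_k)\hookrightarrow L^2(\Omega_k)$, but these two facts do not suffice for the Ehrling-type contradiction argument: negating the inequality produces a sequence $u_n$, bounded in $H^1(\Omega_k)$, with $\|u_n\|_{L^2(\Omega_k)}\to 0$ (hence $u_n\weak 0$ in $H^1(\Omega_k)$) and $\liminf_n\|u_{n|k}\|_{L^2(\Gamma_k)}>0$; a merely bounded trace operator turns weak convergence into weak convergence of the traces, and strong convergence of $u_n$ in $L^2(\Omega_k)$ says nothing about the traces, so no contradiction results. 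What is needed is compactness of the trace operator itself --- precisely the ingredient the paper takes from Ne\v{c}as --- or equivalently the factorization of the trace through a compact embedding $H^1(\Omega_k)\hookrightarrow H^s(\Omega_k)$ with $\tfrac12<s<1$. With that replacement your estimate $|\fb[u]|\le\eps\,\fq[u]+C_\eps\|u\|_{L^2(\Omega_0)}^2$ is correct, and it is the same fact that drives the paper's proof (there packaged as: $\fb$ is bounded on $D(\fq)$ and maps weakly null sequences to null sequences, hence has $\fq$-bound $0$ by a lemma of Daners).

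On accretivity: you only obtain $\Re\fa[u]\ge -C_\eps\|u\|_{L^2(\Omega_0)}^2$, i.e.\ quasi-accretivity, and you say so explicitly. That is indeed all this line of argument can yield --- relative bound $0$ controls $\Re\fb$ only up to a multiple of $\|u\|^2$ --- so the proposition's claim of genuine accretivity is not established by your proof. (It is not established by the paper's one-line deduction from the $\fq$-bound being $0$ either; note that for $c=0$ and $\tau_k\neq\tau_\ell$ on a common interface $\Gamma_{k,\ell}$, testing $\fa$ on suitable piecewise constant functions makes $\fq[u]=0$ and $\Re\fb[u]=(s-t)(\rho_k s-\rho_\ell t)$ up to surface measure, which is indefinite when $\rho_k\neq\rho_\ell$.) Your explicit flagging of the shift is therefore well placed, but be aware that as a proof of the word ``accretive'' in the statement as literally formulated, this part remains open.
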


\begin{proof}
Since $A$ is symmetric and $c$ is real valued with $c\geq 0$, 
the form $\fq$ is symmetric and obviously we have $\fq[u] \geq 0$ for all $u\in \ha $. 
Let $M \coloneqq \sup_{x\in \Omega_0} \|A(x)\| {< \infty}$. We then have
\[
\gamma |\nabla u|^2 \leq \la A\nabla u, \nabla u\ra \leq |A\nabla u||\nabla u| \leq M|\nabla u|^2
\]
almost everywhere. Integrating this  inequality over $\Omega_0$  and adding a suitable multiple of 
$\|u\|_{L^2(\Omega_0)}^2$ we see that
\[
\min\{1,\gamma\} \|u\|_{\ha}^2 \leq \fq[u] +\|u\|_{L^2(\Omega_0)}^2 \leq \max\{1 + \|c\|_{L^\infty(\Omega_0)} , M\} \|u\|_{\ha}^2.
\]
Thus, the inner product
$\langle u, v\rangle_{\fq} \coloneqq \fq[u,v]  + \langle u, v\rangle_{L^2(\Omega)}$ is equivalent to the canonical inner product in $\ha$, i.e., the related norms are equivalent.
This yields the closedness of $\fq$. 

To prove that $\fa$ is closed and sectorial we show that $\fb$ is $\fq$-bounded with $\fq$-bound $0$, i.e.,\ for every $\eps>0$
there exists a constant $C(\eps)$ such that 
\[
|\fb [u]| \leq \eps \fq [u] + C(\eps)\|u\|_{L^2(\Omega)}.
\]
For the proof, let a sequence $u_n$ be given with $u_n \weak 0$ in $\ha$. 
It follows from the 
compactness of the trace operator (which is a consequence of the Lipschitz nature of the boundary, see \cite[Theorem 2.6.2]{nevas}) that we have $u_{n|k} \to 0$ in 
$L^2(\Gamma_k,\sigma)$ for $k=0, \ldots, N$. As the functions $\tau_0, \ldots, \tau_n$ {and $b_{k,\ell}$ ($1\leq k,\ell \leq N$) }
are bounded, it follows from the 
Cauchy--Schwarz inequality that $\fb[u_n] \to 0$. Since $\fb$ is bounded on $D(\fq)$ (a consequence of the boundedness of
the trace operator) it now follows from \cite[Lemma 7.4]{dan13} that $\fb$ is $\fq$-bounded with $\fq$-bound $0$.

A perturbation result for sectorial forms \cite[Theorem VI.1.33]{kato} yields that $\fa$ is a closed and sectorial form; moreover, the associated inner product is equivalent to that associated to $\fq$ and thus it is equivalent to the canonical inner product in 
$\ha $, {proving that $\fa$ is closed}.
Finally, as the $\fq$-bound of $\fb$ is $0$, it follows that $\fa$ is accretive as well.

With the same reasoning we can show that $\fa^*$ is closed, sectorial and accretive.
\end{proof}

We denote by $\cL$ the associated operator of $\fa$ in $L^2(\Omega_0)$. From Theorem \ref{t.assop} we obtain the following result.

\begin{cor}\label{c.l2gen}
The operator $\cL$ generates a strongly continuous, holomorphic and contractive 
semigroup $T=(T(t))_{t\geq 0}$ on $L^2(\Omega)$. The operator $\cL^*$ generates a strongly continuous, holomorphic and contractive semigroup $S = (S(t))_{t\geq 0}$ on $L^2(\Omega_0)$. We have $S(t)=T(t)^*$ for all $t\geq 0$.
\end{cor}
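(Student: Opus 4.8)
The plan is to deduce Corollary \ref{c.l2gen} directly from Proposition \ref{p.sectorial} together with Theorem \ref{t.assop}, so almost all of the work is already done. First I would note that by Proposition \ref{p.sectorial} the form $\fa$ is closed, accretive and sectorial, and it is densely defined since $\ha \supset C_c^\infty(\Omega_0)$ (or even $C^\infty(\overline{\Omega_0})$) which is dense in $L^2(\Omega_0)$. Hence all hypotheses of Theorem \ref{t.assop} are met, and that theorem yields at once that the associated operator $\cL$ is closed, densely defined, sectorial of angle $\tfrac{\pi}{2}-\theta$ in the sense of \cite[Definition II.4.1]{en}, and therefore generates a bounded analytic semigroup $T = (T(t))_{t\ge 0}$ on $L^2(\Omega_0)$ which is contractive on $[0,\infty)$. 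Strong continuity is part of the conclusion ``strongly continuous'' built into the notion of a $C_0$-semigroup generated by such an operator (analytic semigroups with sectorial generators in this sense are strongly continuous at $0$). This gives the first assertion verbatim.

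For the second assertion I would apply the very same argument to $\fa^*$: Proposition \ref{p.sectorial} also asserts that $\fa^*$ is closed, accretive and sectorial, and $\fa^*$ has the same (dense) domain $\ha$, so Theorem \ref{t.assop} applies again and produces a strongly continuous, holomorphic, contractive semigroup $S = (S(t))_{t\ge 0}$ on $L^2(\Omega_0)$ whose generator is the operator associated with $\fa^*$. It remains to identify this generator with $\cL^*$, the Hilbert-space adjoint of $\cL$, and to check $S(t) = T(t)^*$. For the first point I would invoke the standard fact from form theory that the operator associated with the adjoint form $\fa^*$ is precisely the adjoint $\cL^*$ of the operator associated with $\fa$ --- this follows directly from the defining relation $\fa^*[u,v] = \overline{\fa[v,u]} = \overline{\langle -\cL v, u\rangle}=\langle u,-\cL v\rangle$ for $u \in D(\fa^*)=\ha$, $v\in D(\cL)$, i.e. the operator associated with $\fa^*$ extends $\cL^*$, and a symmetric argument (or a resolvent/sectoriality comparison) gives equality; see \cite[Theorem VI.2.5]{kato}. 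Once we know the generator of $S$ is $\cL^*$, the identity $S(t) = T(t)^*$ is immediate: for a $C_0$-semigroup $T$ on a Hilbert space with generator $\cL$, the adjoint family $(T(t)^*)_{t\ge 0}$ is again a $C_0$-semigroup (because $L^2$ is reflexive) with generator $\cL^*$, and a semigroup is uniquely determined by its generator.

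The steps in order are thus: (1) verify density of $\ha$ in $L^2(\Omega_0)$; (2) apply Theorem \ref{t.assop} to $\fa$ to obtain $T$ with all the stated properties; (3) apply Theorem \ref{t.assop} to $\fa^*$ to obtain a semigroup $S$ with generator the operator associated with $\fa^*$; (4) identify that operator with $\cL^*$; (5) conclude $S(t) = T(t)^*$ by uniqueness of the semigroup with a given generator together with reflexivity of $L^2$. None of these steps is genuinely hard given Proposition \ref{p.sectorial}; the only point requiring a little care --- and the one I would expect to be the ``main obstacle'' in the sense of needing an explicit citation rather than a one-line remark --- is step (4), the identification of the operator associated with $\fa^*$ as the adjoint $\cL^*$ rather than merely an extension or restriction of it. This is where I would be careful to cite the precise statement in Kato (or Ouhabaz \cite{ouh05}) that handles the adjoint form, so that the equality, and not just one inclusion, is justified.
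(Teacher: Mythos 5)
Your proposal is correct and follows exactly the route the paper intends: the paper states the corollary as an immediate consequence of Proposition \ref{p.sectorial} combined with Theorem \ref{t.assop}, and you have simply made explicit the details it leaves implicit (density of $\ha$, the identification of the operator associated with $\fa^*$ as $\cL^*$ via \cite[Theorem VI.2.5]{kato}, and the adjoint-semigroup argument in the reflexive space $L^2(\Omega_0)$). No gaps.
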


\section{Convergence results for fast diffusion}\label{sect.conv}

In this section, we `speed up diffusion' by considering the forms $\fa_\kappa$ with index $\kappa \ge 1$ again. Formally, 
this corresponds to replacing the diffusion matrix $A$ with $\kappa A$. Applying the results of the previous section
to $\fa_\kappa$ and $\fa_\kappa^*$, we obtain semigroups $T_{\kappa} $ and $S_{\kappa}$. Let us denote their generators by $\mc L_{\kappa}$ and $\mc L_{\kappa}^*$ so that
\[ \e^{\mc L_{\kappa} t} =  T_{\kappa} (t) \quad \text { and } \quad \e^{\mc L_{\kappa}^*t} = S_{\kappa} (t) . \]
We are interested in convergence of these semigroups as $\kappa \to \infty$.

Note that in changing the diffusion matrix $A$ we are also changing the co-normal derivative
which appears in our transmission conditions. Thus, such a change results in speeding up the diffusion process while keeping the flux through the boundary constant. 
With the help of Theorem \ref{t.ouhabaz} we prove the following result. Note that the space $H_0$ appearing in the following 
theorem is closed in $L^2(\Omega_0)$, so that $H_0$ coincides with the closure of the form domain, considered in Section 
\ref{sec:prelim}.

\begin{thm}\label{t.conv}
As $\kappa\to\infty$
the form $\fa_\kappa$ converges in the strong resolvent sense to the restriction
of $\fq_0 + \fb$ to the domain
\[
H_0 = \{ u \in L^2(\Omega_0) : u|_{\Omega_k} \mbox{ is constant for } k=1, \ldots, N\}.
\]
Similarly, $\fa_\kappa^*$ converges in the strong resolvent sense to $\fq_0 + \fb^*$. Moreover, we have strong convergence
\[
T_{\kappa}(t)u \to \e^{-t (\fq_0 + \fb) }u \quad \mbox{and}\quad S_{\kappa}(t)u \to \e^{-t (\fq_0 + \fb^*)}u, \qquad t >0 ,
\]
in $L^2(\Omega_0)$ as $\kappa \to \infty$.
\end{thm}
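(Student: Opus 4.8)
The plan is to apply Ouhabaz's convergence theorem (Theorem \ref{t.ouhabaz}) to the sequence $\fa_\kappa$ (say along integer $\kappa$, or any sequence $\kappa_n \to \infty$; the monotonicity makes the limit independent of the sequence). The forms $\fa_\kappa = \fq_\kappa + \fb$ all have the common domain $\ha$, and by Proposition \ref{p.sectorial} (applied with diffusion matrix $\kappa A$, which is legitimate for $\kappa \ge 1$) they are accretive, closed and sectorial. The first thing to check is \emph{uniform} sectoriality: since $\fb$ is $\fq$-bounded with bound $0$, for any $\eps > 0$ we have $|\fb[u]| \le \eps\fq[u] + C(\eps)\|u\|_{L^2}^2 \le \eps\fq_\kappa[u] + C(\eps)\|u\|_{L^2}^2$ for all $\kappa \ge 1$, using $\fq_\kappa \ge \fq$; together with $\Im \fa_\kappa = \Im \fb$ (as $\fq_\kappa$ is symmetric) and the fact that $\Re\fa_\kappa[u] \ge (1-\eps)\fq_\kappa[u] - C(\eps)\|u\|^2 \ge (1-\eps)\fq[u] - C(\eps)\|u\|^2$, one bounds $|\Im\fa_\kappa[u]|$ by a $\kappa$-independent multiple of $\Re\fa_\kappa[u]$ after a harmless shift of the vertex; so all numerical ranges sit in one common sector. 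Hypothesis (a) of Theorem \ref{t.ouhabaz} holds because $\Re\fa_\kappa[u] = \kappa\int_{\Omega_0}(A\nabla u)\cdot\overline{\nabla u}\dlam + \int_{\Omega_0} c|u|^2\dlam + \Re\fb[u]$ is nondecreasing in $\kappa$ (the elliptic term is nonnegative), and hypothesis (b) holds trivially in form (i) since $\Im\fa_\kappa[u] = \Im\fb[u]$ does not depend on $\kappa$.

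Next I would identify the limit form. By the recipe in Theorem \ref{t.ouhabaz}, $D(\fa_\infty)$ consists of those $u \in \bigcap_\kappa D(\fa_\kappa) = \ha$ with $\sup_\kappa \fa_\kappa[u] < \infty$ — more precisely $\sup_\kappa \Re\fa_\kappa[u] < \infty$, since that is what controls the associated norm. Because $\Re\fa_\kappa[u] = \kappa\int_{\Omega_0}(A\nabla u)\cdot\overline{\nabla u}\dlam + (\text{terms independent of }\kappa)$ and $A$ is uniformly elliptic, finiteness of the supremum forces $\int_{\Omega_0}|\nabla u|^2\dlam = \sum_{k=1}^N\int_{\Omega_k}|\nabla u|^2\dlam = 0$, i.e. $u|_{\Omega_k}$ is constant on each $\Omega_k$ (each $\Omega_k$ being connected), which is exactly the space $H_0$ in the statement. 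Conversely every such $u$ lies in $\ha$ and has $\fa_\kappa[u] = \fb[u] + \int_{\Omega_0}c|u|^2\dlam$ independent of $\kappa$, so $\fa_\kappa[u] \to (\fq_0 + \fb)[u]$; hence $\fa_\infty$ is precisely $\fq_0 + \fb$ restricted to $H_0$, which is (as noted before the theorem) closed in $L^2(\Omega_0)$, so the form is non-densely defined but closed and sectorial, and Theorem \ref{t.ouhabaz} gives strong resolvent convergence $(\lambda + \fa_\kappa)^{-1}u \to (\lambda + \fq_0 + \fb)^{-1}u$ for $u \in L^2(\Omega_0)$ and $\lambda$ outside the common sector. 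The same argument verbatim applies to $\fa_\kappa^* = \fq_\kappa + \fb^*$ — note $\Re\fa_\kappa^* = \Re\fa_\kappa$ so monotonicity is identical and $\Im\fa_\kappa^* = -\Im\fb$ is again $\kappa$-independent — yielding convergence to $\fq_0 + \fb^*$ on the same $H_0$.

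Finally, the semigroup convergence $T_\kappa(t)u \to \e^{-t(\fq_0+\fb)}u$ and $S_\kappa(t)u \to \e^{-t(\fq_0+\fb^*)}u$ in $L^2(\Omega_0)$ for $t > 0$ is exactly the content of Corollary \ref{c.semigroupconv} applied to the sequences $\fa_\kappa$ and $\fa_\kappa^*$: uniform sectoriality of the forms gives uniform sectoriality of the associated operators via Theorem \ref{t.assop}, hence a uniform integrable majorant on a resolvent contour, and dominated convergence upgrades strong resolvent convergence to strong convergence of the (degenerate) semigroups. I would remark that $\e^{-t(\fq_0+\fb)} = \e^{t\cL_{H_0}}P_{H_0}$ in the notation of \eqref{eza}, $P_{H_0}$ being the orthogonal projection of $L^2(\Omega_0)$ onto $H_0$, i.e. the operation of replacing $u$ on each $\Omega_k$ by its average; this is where the "lumping" of the averaging principle becomes visible.

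The main obstacle is not conceptual but bookkeeping: one must make the uniform-sectoriality estimate genuinely uniform in $\kappa \ge 1$ — the point being that the $\fq$-bound-zero property of $\fb$ is with respect to $\fq = \fq_1$ and one needs it to persist against $\fq_\kappa$ with constants independent of $\kappa$, which works precisely because $\fq_\kappa \ge \fq_1$ so the same $\eps$, $C(\eps)$ do the job. A secondary subtlety, already flagged in the excerpt, is that Ouhabaz's theorem is stated for densely defined forms while our limit form is not densely defined; one invokes the remark after Theorem \ref{t.ouhabaz} that the proof carries over, resting only on Simon's monotone convergence theorem which holds in the non-densely-defined setting.
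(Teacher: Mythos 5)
Your proposal is correct and follows essentially the same route as the paper: verify the hypotheses of Theorem \ref{t.ouhabaz} (monotonically increasing real parts, $\kappa$-independent imaginary parts, uniform sectoriality), identify the limit domain as $H_0$ via uniform ellipticity of $A$ and connectedness of the $\Omega_k$, and pass to the semigroups via Corollary \ref{c.semigroupconv}. The one point where you deviate is the uniform-sectoriality estimate: the paper gets it with no vertex shift at all, by using that $\fa_1$ is accretive and sectorial with vertex $0$ to write $|\Im\fa_\kappa[u]|=|\Im\fb[u]|\leq C(\fq[u]+\Re\fb[u])\leq C(\kappa\fq[u]+\Re\fb[u])=C\Re\fa_\kappa[u]$; your version lands only in a sector with a negative vertex, which does not literally match the hypothesis of Theorem \ref{t.ouhabaz} (a common sector $\Sigma_0(\theta)$), so as written you would still need to apply the theorem to the shifted forms $\fa_\kappa+\gamma\langle\cdot,\cdot\rangle$ and transfer the resolvent and semigroup convergence back --- the direct estimate is cleaner and is available to you from Proposition \ref{p.sectorial}.
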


\begin{proof}
We have $\Re \fa_\kappa[u] = \fq_\kappa [u] + \Re\fb[u]$ which is clearly increasing in $\kappa$.  On the other hand,
$\Im \fa_\kappa[u] = \Im \fb[u]$ since $\fq$ is symmetric. Thus $\Im\fa_\kappa[u]$ is independent of $\kappa$ whence
both contitions (b)(i) and (b)(ii) in Theorem \ref{t.ouhabaz} are satisfied. Next, since $\fa_1$ is sectorial we find a constant $C>0$ such that
\[
|\Im \fb [u]|\leq C(\fq[u] + \Re\fb[u]) \leq C(\kappa \fq [u] + \Re\fb[u]) = C\Re\fa_\kappa[u].
\]
This shows that the forms $\fa_\kappa, \kappa \ge 1$ are indeed uniformly sectorial so that all assumptions of Theorem \ref{t.ouhabaz} are satisfied, implying the strong resolvent convergence of the forms and strong convergence of the semigroups. 

Let us check that the limiting form  is as claimed. Obviously, $\sup_\kappa \Re\fa_\kappa[u] < \infty$ if and only if 
\begin{equation}\label{eq.alim}
\int_{\Omega_0} (A\nabla u)\cdot \overline{\nabla u}\, \dlam = 0
\end{equation}
 and in this case
  $\lim_{\kappa \to \infty} \fa_\kappa [u] = \fq_0[u]+\fb[u]$. Since the matrix $A$ is uniformly elliptic (as assumed throughout)
Equation \eqref{eq.alim} implies that $\nabla u =0$ on $\Omega_k$ for $k=1, \ldots, N$. As each $\Omega_k$ was assumed to be connected, $u$ is constant on each of these sets. 
Since, conversely, $u \in H_0$ implies \eqref{eq.alim}, we are done.
\end{proof}

\begin{rem}\label{l.markusa}
We can actually `speed up' diffusion in a much more general way and obtain the same convergence result. Indeed, let
$A_\kappa = (a_{ij}^{(\kappa)}) \in L^\infty(\Omega_0, \CR^{d\times d})$ be such that
\[
\gamma\| \xi\|^2 \leq \sum_{i,j=1}^d a_{ij}^{(\kappa)}(x) \xi_i\bar\xi_j \leq \sum_{i,j=1}^d a_{ij}^{(\kappa+1)}(x) \xi_i\bar\xi_j 
\]
for every $\kappa \ge 1$, every $\xi \in \CC^d$ and almost all $x \in \Omega_0$ and such that
\[
\sup_\kappa \sum_{i,j=1}^d a_{ij}^{(\kappa)}(x) \xi_i\bar\xi_j = \infty\]
 for almost all $x\in \Omega_0$ and all $\xi \in \CC^d\setminus \{0\}$. If we define the form $\fq_\kappa$ as in Definition \ref{def.forms} with $\kappa A$ replaced with $A_\kappa$, then
$\fq_\kappa$ is an increasing sequence of symmetric forms and $\sup \fq_\kappa [u] < \infty$ if and only if $\fq [u] = 0$, and the conclusion in Theorem \ref{t.l2conv} remains valid. 
\end{rem}

We next describe in more detail the limiting form and the limit semigroup, and provide a probabilistic interpretation of Theorem \ref{t.l2conv}. As we have seen in Section \ref{sec:prelim}, the limit semigroup basically operates on the space $H_0$, whereas
everything in $H_0^\perp$ is immediately mapped to 0.
The orthogonal projection onto $H_{0}$ is given by
\begin{equation}\label{eq.projection}
P_{H_{0}} u \coloneqq \sum_{k\in\mc N}\frac 1{\lam (\Omega_k)} \int_{\Omega_k} u\dlam \cdot \one_{\Omega_k}, \qquad u \in L^2(\Omega_0).
\end{equation}
Let $\mu$ be the measure on $\mc N$ (see \eqref{en}) defined by
\begin{equation}\label{miu}
\mu (S) = \sum_{k\in S}\lambda (\Omega_k), \quad \mbox{ for } S\subset \mc N.
\end{equation}
We denote the associated $L^2$ space by $\ell^2_\mu \coloneqq L^2(\mc N, 2^{\mc N}, \mu)$.  This space can be identified
with $\CC^N$ equipped with the norm
\[
\|x\|_{\ell^2_\mu} \coloneqq \Big( \sum_{k\in\mc N} |x_k|^2\lambda(\Omega_k) \Big)^{\frac{1}{2}}
\quad\mbox{for } x= (x_1, \ldots, x_N) \in \CC^N.
\]
Clearly, $\ell^2_\mu$ is a Hilbert space with respect to the scalar product
\[
\la x, y\ra_{\ell^2_\mu} = \sum_{k\in\mc N} x_k\overline{y_k}\lambda (\Omega_k).
\]
We note that the norm $\|\cdot\|_{\ell^2_\mu}$ is chosen in such a way that $\ell^2_\mu$ is isometrically isomorphic to $H_{0}$ viewed as a subspace of $L^2(\Omega_0)$, via the isomorphism
\[
\Phi: x\mapsto \sum_{k\in\mc N} x_k\one_{\Omega_k}.
\]
Under this identification, the number $\mu(\{k\})$ serves as a sort of weight for the $k$th component. When modeling the diffusion of some chemical substance, for example, in the limit equation the total mass of the diffusing substance in $\Omega_k$ is not the $k$th component $x_k$ of the vector $x$, but it is $ \mu (\{k\})\, x_k$. With this interpretation, the choice for the measure $\mu$ can be justified by observing that the set $\Omega_k$, which has measure $\lambda (\Omega_k) = \mu(\{k\})$ is in the limit lumped together into the single state $k\in \mc N$.

Our goal is to identify the operator associated with the limiting form, or -- more specifically --  its isomorphic image in $\ell^2_\mu.$  
To this end, for $ k \in \mc N, \ell \in \mc N_0$, $\ell\not = k$, let \[\rho _{k,\ell} = \int _{\Gamma_{k,\ell}}  \tau_k \ud \sigma. \] 
{For $\ell \neq 0$ this is} the total permeability of the membrane  $\Gamma_{k,\ell}$ separating $\Omega_k$ from $\Omega_\ell$ when approached from within $\Omega_k$. It may also be thought of as the average number of particles that filter through $\Gamma_{k,\ell}$ in a unit of time. Next, for $k \in \mc N$,
\[
\varrho_{k,k} = - \sum_{\ell\in \mc N_0, \ell \not = k} \rho_{k,\ell}\]
is (minus) the average number of particles that filter from $\Omega_k$ to an adjacent $\Omega_\ell$ in a unit of time, i.e., the number of particles lost by $\Omega_k$. Finally, the quantity \[ \varrho _{k,\ell} = \int _{\Gamma_{k,\ell}} b_{k,\ell} \tau_k \ud \sigma \qquad k,\ell \in \mc N,\] 
may be thought of as the average number of particles that after filtering from $\Omega_k$ to $\Omega_{\ell}$
 in a unit of time survive to continue their chaotic movement in $\Gamma_{\ell}$, i.e., the number of particles gained by $\Omega_\ell$ from $\Omega_k$. 
Finally, we define
\begin{equation}\label{qkl} q_{k,\ell} = \frac {\varrho_{k, \ell}}{\lam (\Omega_k)}, \qquad k, \ell \in \mc N \end{equation}
and let $Q $ be the real $n\times n$ matrix of the coefficients $q_{k,\ell}, k,\ell\in \mc N$.
Likewise, we define 
\begin{equation}\label{qklstar}
q_{k,\ell}^* = \frac{\varrho_{\ell,k}}{\lambda (\Omega_k)}, \qquad k, \ell \in \mc N
\end{equation}
and set $Q^*$ to be the real $n\times n$ matrix of the coefficients $q_{k, \ell}^*, k, \ell \in \mc N$. {Note that $Q^*$ is indeed the adjoint of the matrix $Q$ with respect to the scalar product $\langle \cdot, \cdot \rangle_{\ell^2_\mu}$, see  
\eqref{eq.q} and \eqref{eq.qstar} below,}  but it is different from the mere transpose $Q^\mathsf{T}$ of $Q$ which is the adjoint with respect to the canonical scalar product $\langle x, y \rangle \coloneqq
\sum_{k=1}^N x_k \bar y_k$.

In the case where $\tau_{k,0} =0$ for all $k \in \mc N$, i.e.,\ when we impose Neumann boundary conditions on the boundary of $\Omega_0$ and, additionally, $b_{k,\ell} = 1$ so that 
$\varrho_{k,\ell} = \rho_{k, \ell}$ for all $k,\ell \in \mc N$, i.e., when no loss of particles is possible in the process of filtering through the inward membranes,  
the diagonal entries in $Q$ are non positive, the off-diagonal entries are non negative and the row sums $\sum_{\ell \in \mc N} q_{k,\ell}$ are zero
for every $k\in \mc N$. This shows that $Q$ is the intensity matrix of a continuous time (honest) Markov chain with $N$ states. In general, however, a loss of probability mass is possible -- this corresponds to the possibility for a particle to be killed after filtering through the inward or outward membrane in the approximating process. Hence, in general, the chain described by $Q$ is not honest. 

\begin{prop} \label{opq} The operator associated with $\fb$ restricted to $H_0$ is 
\[ \Phi Q \Phi ^{-1},\]
the operator associated with $\fb^*$ restricted to $H_0$ is
\[ \Phi Q^* \Phi^{-1}.\] 
\end{prop}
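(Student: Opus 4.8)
The plan is to compute the form $\fb$ restricted to $H_0$ explicitly, transport it to $\ell^2_\mu$ via the isometric isomorphism $\Phi$, and then read off the associated operator by comparing with the defining identity $\fb[u,v] = -\langle \cL_{\fb} u, v\rangle$. First I would take $u = \Phi x$ and $v = \Phi y$ with $x,y \in \CC^N$, so that $u_{|k} = x_k$ and $u_{|\ell} = x_\ell$ are constants (and similarly for $v$). Since the gradient terms vanish on $H_0$, we have
\[
\fb[\Phi x, \Phi y] = \sum_{\substack{k\in\mc N\\\ell\in\mc N_0}} \int_{\Gamma_{k,\ell}} \tau_k(x_k - b_{k,\ell}x_\ell)\overline{y_k}\,\ud\sigma
= \sum_{\substack{k\in\mc N\\\ell\in\mc N_0}} \big(\rho_{k,\ell}x_k - \varrho_{k,\ell}x_\ell\big)\overline{y_k},
\]
using the definitions of $\rho_{k,\ell}$ and $\varrho_{k,\ell}$, and recalling the conventions $b_{k,0}=0$, $\Gamma_{k,k}=\emptyset$. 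The key bookkeeping step is then to observe that $\sum_{\ell\in\mc N_0,\ell\neq k}\rho_{k,\ell} = -\varrho_{k,k}$, so that the coefficient of $x_k\overline{y_k}$ is exactly $-\varrho_{k,k}$, and the full expression becomes $-\sum_{k}\sum_{\ell\in\mc N}\varrho_{k,\ell}x_\ell\overline{y_k}$ (the $\ell=0$ term in the sum contributes only through $\rho_{k,0}$, which has already been absorbed into $\varrho_{k,k}$). In other words $\fb[\Phi x,\Phi y] = -\sum_{k,\ell\in\mc N}\varrho_{k,\ell}x_\ell\overline{y_k}$.

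Next I would rewrite this in terms of the matrix $Q$ and the weighted inner product on $\ell^2_\mu$. By \eqref{qkl} we have $\varrho_{k,\ell} = q_{k,\ell}\lam(\Omega_k)$, hence
\[
\fb[\Phi x,\Phi y] = -\sum_{k\in\mc N}\Big(\sum_{\ell\in\mc N}q_{k,\ell}x_\ell\Big)\overline{y_k}\,\lam(\Omega_k) = -\langle Qx, y\rangle_{\ell^2_\mu}.
\]
Since $\Phi$ is an isometric isomorphism from $\ell^2_\mu$ onto $H_0$, and $L^2$-inner products pull back correctly, the form $\fb|_{H_0}$ corresponds under $\Phi$ to the form $(x,y)\mapsto -\langle Qx,y\rangle_{\ell^2_\mu}$ on $\ell^2_\mu$; by the definition of the associated operator (and since $\ell^2_\mu$ is finite-dimensional so every such form has a bounded associated operator), its associated operator is $Q$. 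Transporting back, the operator associated with $\fb|_{H_0}$ is $\Phi Q\Phi^{-1}$.

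For the adjoint statement I would run the identical computation starting from $\fb^*$ as given in \eqref{eq.adjoint}, or alternatively simply invoke that the associated operator of $\fb^*|_{H_0}$ is the Hilbert-space adjoint (in $L^2(\Omega_0)$, equivalently in $\ell^2_\mu$ after transport) of the operator associated with $\fb|_{H_0}$; either way one is led to the matrix $Q^*$ with entries $q^*_{k,\ell} = \varrho_{\ell,k}/\lam(\Omega_k)$ as in \eqref{qklstar}, which is precisely the $\langle\cdot,\cdot\rangle_{\ell^2_\mu}$-adjoint of $Q$. I expect the only real subtlety — the main obstacle, such as it is — to be the careful handling of the index conventions: making sure the $\ell=0$ (outer boundary / Robin) terms are accounted for exactly once, that $\Gamma_{k,k}=\emptyset$ is used to drop diagonal boundary integrals, and that the diagonal entry $\varrho_{k,k}$ genuinely collects all the outgoing fluxes $\rho_{k,\ell}$ including $\rho_{k,0}$. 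Lemma \ref{l.intersect} is implicitly what makes the boundary decomposition unambiguous, but beyond that the argument is a direct, if slightly fiddly, computation.
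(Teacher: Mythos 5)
Your proposal is correct and follows essentially the same route as the paper's proof: evaluate $\fb[\Phi x,\Phi y]$ on constants, split off the $\rho_{k,\ell}$ terms to recover the diagonal entry $-\varrho_{k,k}$, identify the result with $-\langle Qx,y\rangle_{\ell^2_\mu}$, and transport back through $\Phi$; the paper handles $\fb^*$ by the same change-of-summation/conjugation argument you invoke. The index bookkeeping (using $b_{k,0}=0$ and $\Gamma_{k,k}=\emptyset$) is handled the same way in both.
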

\begin{proof} Let $u=\Phi (x), v=\Phi (y) \in H_0$.  Then \begin{align}
\fb[u,v] 
& = \sum_{\substack{k \in \mc N\\\ell\in \mc N_0}} \int_{\Gamma_{k,\ell}} \tau_k(x_k-b_{k,\ell}x_{\ell})\bar{y}_k \,\ud\sigma \notag\\
& = \sum_{\substack{k \in \mc N\\\ell\in \mc N_0}} x_k\overline{y_k} \int_{\Gamma_{k,\ell}} \tau_k\, \ud\sigma - \sum_{\substack{k \in \mc N\\\ell\in \mc N_0}}
x_{\ell}\overline{y_k}  \int_{\Gamma_{k,\ell}} \tau_k b_{k,\ell} \ud\sigma  \notag\\
& = -\sum_{k\in\mc N} \varrho_{k,k}x_k\overline{y_k} - \sum_{\substack{k,\ell\in \mc N\\k\neq \ell}}
\varrho_{k,\ell} x_{\ell} \overline{y_k} \qquad \text{(recall $b_{k,0}=0$ and $\Gamma_{k,k}=\emptyset $)}\notag\\
& = -\sum_{k\in \mc N} \sum_{\ell\in\mc N}\frac{\varrho_{k,\ell}}{\lambda (\Omega_k)}x_{\ell} \overline{y_k}\lambda(\Omega_k) =- \langle Qx,y \rangle_{\ell^2_\mu} , \label{eq.q}\end{align} 
where $Qx$ is the matrix product, or (changing the order of summation)
\begin{align}
& = - \sum_{\ell\in\mc N} \sum_{k\in \mc N} x_\ell \overline{\frac{\varrho_{k,\ell}}{\lambda (\Omega_\ell)}y_k}\lambda(\Omega_\ell) =- \langle x,Q^*y \rangle_{\ell^2_\mu},  \phantom{============} \label{eq.qstar}
\end{align}
where $Q^*y$ is the matrix product. It follows that 
\[ \fb[u,v] = - \langle Q\Phi^{-1}u, \Phi^{-1}v \rangle_{\ell^2_\mu} = - \langle \Phi Q\Phi^{-1}u, v \rangle_{H_0},  \] 
where the last scalar product in $H_0$ is that inherited from $L^2(\Omega_0).$ {Likewise
\begin{align*}
\fb^*[u,v] & = \overline{\fb [v,u]} =  -\langle Q^*\Phi^{-1}u, \Phi^{-1}v\rangle_{\ell^2_\mu}
= -\langle \Phi Q^*\Phi^{-1} u, v\rangle_{H_0}.
\end{align*}}
This completes the proof. \end{proof}

Let us now take care of the potential term.
We put $C \coloneqq \diag (\Phi^{-1}Pc)$, where $P$ is defined by \eqref{eq.projection}. In other words, $C$ is the diagonal matrix whose entries are the average values 
of $c$ on the sets $\Omega_k$ ($1\leq k\leq N$). A straightforward computation shows that 
the operator related to $\fq_0 $ restricted to $H_0$ is $- \Phi C \Phi^{-1}.$ As $C$ is a diagonal matrix, we see that $C^* = C$,
whence the operator related to $\fq_0^*$ restricted to $H_0$ is also $-\Phi C \Phi^{-1}$.
Combining this with Proposition \ref{opq}, we obtain the following corollary. 

\begin{cor} The operator associated with the limiting form $\fq_0+\fb$ (resp.\ $\fq_0^* + \fb^*$) on the domain $H_0$ is 
\[ \Phi (Q - C) \Phi^{-1} \quad (\mbox{resp. } \Phi (Q^*-C)\Phi).\] \end{cor}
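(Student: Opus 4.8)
The plan is to assemble the corollary from pieces that are already in hand, so the proof is essentially additive. First I would recall that, by Theorem \ref{t.conv}, the limiting form is $\fq_0 + \fb$ restricted to $H_0$ (and $\fq_0^* + \fb^*$ for the adjoint). Since $\fq_0$ and $\fb$ are both defined on all of $\ha$ and the restriction to $H_0$ is just the restriction of their sum, the associated operator on $H_0$ is the sum of the operators associated with the restrictions $\fq_0|_{H_0}$ and $\fb|_{H_0}$, provided one checks that the domains match up — but this is immediate here because $H_0$ is finite-dimensional (it is isomorphic to $\CC^N$ via $\Phi$), so every form on $H_0$ is bounded and the associated operator is defined on all of $H_0$.

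Next I would invoke Proposition \ref{opq}, which already identifies the operator associated with $\fb|_{H_0}$ as $\Phi Q \Phi^{-1}$ and the one associated with $\fb^*|_{H_0}$ as $\Phi Q^* \Phi^{-1}$. It then remains only to verify the parenthetical claim made just above the corollary, namely that the operator associated with $\fq_0|_{H_0}$ is $-\Phi C \Phi^{-1}$ with $C = \diag(\Phi^{-1} P c)$. This is the short computation alluded to in the text: for $u = \Phi(x)$ and $v = \Phi(y)$ in $H_0$ one has
\[
\fq_0[u,v] = \int_{\Omega_0} c u \bar v \dlam = \sum_{k\in\mc N} x_k \overline{y_k} \int_{\Omega_k} c\dlam = \sum_{k\in\mc N} \Big( \frac{1}{\lam(\Omega_k)}\int_{\Omega_k} c\dlam\Big) x_k \overline{y_k}\, \lam(\Omega_k) = \langle C x, y\rangle_{\ell^2_\mu},
\]
so that $\fq_0[u,v] = \langle \Phi C \Phi^{-1} u, v\rangle_{H_0}$, and by the sign convention in the definition of the associated operator the operator is $-\Phi C\Phi^{-1}$. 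Since $C$ is diagonal and real it is self-adjoint on $\ell^2_\mu$, so the same identity shows $\fq_0^*|_{H_0}$ has associated operator $-\Phi C \Phi^{-1}$ as well.

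Finally I would combine: the operator associated with $\fq_0 + \fb$ on $H_0$ is $-\Phi C\Phi^{-1} + \Phi Q \Phi^{-1} = \Phi(Q - C)\Phi^{-1}$, and likewise for the adjoint form one gets $\Phi(Q^* - C)\Phi^{-1}$. I do not expect a genuine obstacle here; the only point requiring a word of care is the additivity of associated operators under a sum of forms, and this is trivial in the present finite-dimensional setting since both summand forms and their sum are everywhere defined and bounded on $H_0$, so there is no domain subtlety to track. (One should also be mildly careful with the accretivity/sectoriality bookkeeping inherited from Proposition \ref{p.sectorial} and Theorem \ref{t.conv}, but this is only needed to know the limiting operator generates a semigroup, not to identify it, and has already been established.)
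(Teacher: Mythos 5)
Your proposal is correct and follows the same route as the paper: it combines Proposition \ref{opq} for the boundary form $\fb$ with the (there unstated, here written out) computation showing that $\fq_0$ restricted to $H_0$ has associated operator $-\Phi C\Phi^{-1}$, and then adds the two. The only difference is that you make explicit the "straightforward computation" for the potential term and the (harmless, finite-dimensional) additivity of associated operators, both of which the paper leaves implicit.
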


From now on, we no longer distinguish between $\ell^2_\mu$ and its isometric image $H_0 = \Phi(\ell^2_\mu)$. Thus, with slight abuse of
notation, we will consider the (matrix) semigroups $\e^{t(Q-C)}$ and $\e^{t(Q-C)^*}$ as semigroups on the space $H_0$.
With this convention, we can now reformulate Theorem \ref{t.conv} as follows (see \eqref{eza} and \eqref{eq.projection}). 

\begin{thm}\label{t.l2conv} 
With the notation introduced above, we have for every $u\in L^2(\Omega_0)$ and $t>0$
\begin{align*}
\lim_{\kappa\to\infty} T_{\kappa}(t) u & = \e^{t (Q-C)} P_{H_{0}} u  \quad\mbox{and}\\
\lim_{\kappa\to\infty} S_{\kappa} (t) u & = \e^{t (Q^* - C)} P_{H_{0}} u.
\end{align*} \end{thm}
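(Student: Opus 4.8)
The plan is to deduce Theorem~\ref{t.l2conv} from Theorem~\ref{t.conv} essentially by translating the abstract statement into the concrete language of $\ell^2_\mu \cong H_0$. Recall that Theorem~\ref{t.conv} already gives the strong convergence $T_\kappa(t)u \to \e^{-t(\fq_0+\fb)}u$ and $S_\kappa(t)u \to \e^{-t(\fq_0+\fb^*)}u$ in $L^2(\Omega_0)$ for every $u$ and $t>0$, where the limiting forms are the \emph{non densely defined} forms $\fq_0+\fb$ and $\fq_0+\fb^*$ with form domain $H_0$. By the convention set up in Section~\ref{sec:prelim} (see \eqref{eza}), the degenerate semigroup $\e^{-t(\fq_0+\fb)}$ equals $\e^{t\cL_{H_0}}P_{H_0}$, where $\cL_{H_0}$ is the operator associated with the restricted form on the Hilbert space $H_0 = \overline{D(\fq_0+\fb)}$, which is exactly the subspace described in Theorem~\ref{t.conv}.

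First I would record that $H_0$ is indeed closed in $L^2(\Omega_0)$ — this is already noted in the excerpt just before Theorem~\ref{t.conv}, and is immediate since $H_0 = \{u : \nabla u|_{\Omega_k}=0,\ k=1,\dots,N\}$ is the kernel of a bounded operator, or alternatively is the finite-dimensional span of $\one_{\Omega_1}, \dots, \one_{\Omega_N}$ — so that $H_0$ coincides with the closure of the form domain and $P_{H_0}$ from \eqref{eq.projection} is genuinely the orthogonal projection onto it. Second, I would invoke the Corollary just proved, which identifies the operator associated with $\fq_0+\fb$ restricted to $H_0$ as $\Phi(Q-C)\Phi^{-1}$ and the one associated with $\fq_0+\fb^*$ as $\Phi(Q^*-C)\Phi^{-1}$; under the isometric identification $H_0 = \Phi(\ell^2_\mu)$ these are just the matrix operators $Q-C$ and $Q^*-C$ acting on $H_0$. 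Consequently $\cL_{H_0} = -(Q-C)$ for the first form and $\cL_{H_0}=-(Q^*-C)$ for the second, so that, with the sign conventions of \eqref{eza}, $\e^{-t(\fq_0+\fb)} = \e^{t(Q-C)}P_{H_0}$ and $\e^{-t(\fq_0+\fb^*)} = \e^{t(Q^*-C)}P_{H_0}$ as operators on $L^2(\Omega_0)$.

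Putting these two observations together with the semigroup convergence from Theorem~\ref{t.conv} yields, for every $u\in L^2(\Omega_0)$ and $t>0$,
\[
\lim_{\kappa\to\infty} T_\kappa(t)u = \e^{-t(\fq_0+\fb)}u = \e^{t(Q-C)}P_{H_0}u,
\]
and similarly $\lim_{\kappa\to\infty} S_\kappa(t)u = \e^{t(Q^*-C)}P_{H_0}u$, which is precisely the assertion. The only points requiring genuine (if routine) verification are the sign bookkeeping in $\cL u = -w$ versus $\e^{-z\fa}$ and the compatibility of the scalar products — namely that $Q^*$ as defined in \eqref{qklstar} really is the $\langle\cdot,\cdot\rangle_{\ell^2_\mu}$-adjoint of $Q$, which is exactly what \eqref{eq.q}--\eqref{eq.qstar} established, and that $\Phi$ is an isometry from $\ell^2_\mu$ onto $H_0$, which was checked when $\mu$ was defined. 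I do not anticipate a real obstacle here: the content is entirely in Theorems~\ref{t.ouhabaz} and~\ref{t.conv} and in Proposition~\ref{opq}, and the present statement is a repackaging. The one place to be careful is to make sure the identification ``$\e^{-t\fa}=\e^{t\cL_{H_0}}P_{H_0}$'' is applied to the limit form and not only to the approximating forms, but this is licit precisely because Ouhabaz's theorem (as extended to non densely defined forms in the excerpt) guarantees the limit form is itself closed and sectorial, so Theorem~\ref{t.assop} and the degenerate-semigroup formalism of \eqref{eza} apply to it verbatim.
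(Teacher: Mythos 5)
Your proposal is correct and follows exactly the paper's route: Theorem \ref{t.l2conv} is obtained there as a direct reformulation of Theorem \ref{t.conv}, using the identification $H_0\cong\ell^2_\mu$ via $\Phi$, the identification of the associated operators in Proposition \ref{opq} and the Corollary following it, and the degenerate-semigroup formula \eqref{eza}. One minor internal slip: since that Corollary gives the associated operator of the limit form as $\Phi(Q-C)\Phi^{-1}$, under the identification you should have $\cL_{H_0}=Q-C$ rather than $-(Q-C)$; your final formula $\e^{-t(\fq_0+\fb)}=\e^{t(Q-C)}P_{H_0}$ is nevertheless the correct one and consistent with \eqref{eza}.
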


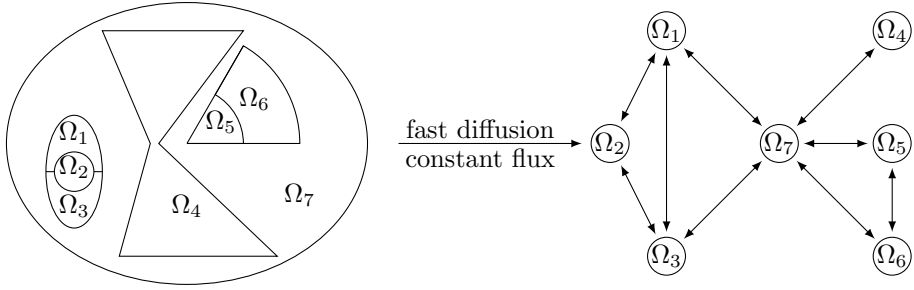
\begin{figure}
\begin{tikzpicture}[scale=0.75]
   \draw (0:0cm) -- (0:2cm)
         arc (0:60:2cm) -- (60:1cm)
         arc (60:0:1cm) -- (60:0cm)
-- cycle;\draw (0,0) -- (1,1.732);
\draw (0,0) ellipse (3.2cm and 2.5cm);
\draw (-2,-0.5) ellipse (0.5cm and 1 cm)
               ellipse (0.35cm and 0.35cm);
\draw (-0.65,0) -- (-1.2,-2) -- (1.6,-2) -- (-0.5,0) -- (1,2)-- (-1.5,2)--cycle;
\draw (-1.65,-0.5)--(-1.5,-0.5);
\draw (-2.5,-0.5)--(-2.35,-0.5);
\node [above] at (-2,-0.1) {$\Omega_1$};
\node [above] at (-2,-0.7) {$\Omega_2$};
\node [above] at (-2,-1.4) {$\Omega_3$};
\node [above] at (-0,-1.4) {$\Omega_4$};
\node [above] at (2,-1.2) {$\Omega_7$};
\node [above] at (0.6,0.1) {$\Omega_5$};
\node [above] at (1.2,0.5) {$\Omega_6$};
\draw [->](3.75,0) -- (7.0,0); 
\node [above] at (5.2,0) {\text{fast diffusion}};
\node [below] at (5.2,0) {\text{constant flux}};
  \tikzstyle{every node}=[draw,shape=circle];
 \path (10.5,0) node (v0) {$\Omega_7$};
\path (8.5,2) node (v1) {$\Omega_1$};
\path (7.5,0) node (v2) {$\Omega_2$};
\path (8.5,-2) node (v3) {$\Omega_3$};
   \path (12.5,2) node (v4) {$\Omega_4$};
   \path (12.5,0) node (v5) {$\Omega_5$};
\path (12.5,-2) node (v6) {$\Omega_6$};
  \draw [<->](v0) -- (v1);
  \draw      [<->] (v1) -- (v3);
  \draw      [<->] (v0) -- (v3);
  \draw       [<->] (v0) -- (v4);
  \draw       [<->](v0) -- (v5);
 \draw [<->] (v0) -- (v6);
 \draw [<->] (v1) -- (v2);
 \draw [<->] (v3) -- (v2);
 \draw [<->] (v5) -- (v6);

 \end{tikzpicture} \caption{State-space collapse as fast diffusions on domains separated by semi-permeable domains converge to a Markov chain. Intensity of jump between aggregated states $\Omega_k$ and $\Omega_\ell$ is  $q_{k,\ell} = \frac {\varrho_{k, \ell}}{\lam (\Omega_k)}$.}\label{fig3}\end{figure}

To summarize: in probabilistic terms discussed in this subsection, Theorem \ref{t.l2conv} with $c=0$ asserts that our diffusion processes converge to a continuous time Markov chain with state space $\mc N$ which may be though of as being composed of $N$ aggregated states, each of them corresponding to one domain of diffusion (see Figure \ref{fig3}); $c\not =0$ plays the role of a potential term. As advertised in the introduction, the jump intensities in this chain (given by \eqref{qkl}) are  in direct proportion to the total permeability of the membranes, and in inverse proportion to the sizes of the domains.

In our examples in the following section, we will also consider \emph{inhomogeneous equations}, i.e.,\ equations of the form
\[
z'(t) = Az(t) + f(t),\quad u(0) = u_0
\]
where $A$ is the generator of a strongly continuous semigroup $T$ on a Banach space $X$ and $f \in L^1( (0,t_0);X)$, for some $t_0>0.$ 
Most often, one uses the concept of a \emph{mild solution}
for such equations. By \cite[Proposition 3.1.6]{abhn}, the mild solution is given through the \emph{variation of constants formula}
\begin{equation}\label{eq.voc}
z(t) = T(t)u_0 +\int_0^t T(t-s)f(s)\, \ud s
\end{equation}
for $t \in [0,t_0]$.

\begin{cor} \label{cor.main} Fix $t_0 >0.$ Let $[0,t_0] \ni t \mapsto f(t)\in L^2(\Omega_0)$ be a Bochner integrable function, and $u_0$ be a fixed element of  $L^2 (\Omega_0)$. Then, as $\kappa \to \infty$, the mild solutions $[0,t_0] \ni t \mapsto z_\kappa (t) \in L^2(\Omega_0)$ of the Cauchy problems 
\[ z_\kappa'(t) = \mc L_{\kappa}u_\kappa (t) + f(t),  \quad t \in [0,t_0], \quad z_\kappa (0) = u_0, \] converge 
in $L^2(\Omega_0)$ and pointwise on $(0,t_0]$ to the function
\[ e^{t(Q-C)}P_{H_0}u_0 + \int_0^t e^{(t-s)(Q-C)}P_{H_0}f(s)\ud s, \] 
the solution of the Cauchy problem
\[
z'(t) = (Q-C) z(t) + P_{H_0}f(t)\quad t \in [0,t_0], \quad z (0) = P_{H_0} u_0
\]
on the space $H_0$. An analogous result holds for $\mc L_{\kappa}^*.$
\end{cor}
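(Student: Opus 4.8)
The plan is to deduce Corollary \ref{cor.main} directly from Theorem \ref{t.l2conv} together with the variation of constants formula \eqref{eq.voc}, so that essentially no new analysis is required beyond a dominated-convergence argument. First I would record that, since each $\mc L_\kappa$ generates the contractive, strongly continuous semigroup $T_\kappa$ on $L^2(\Omega_0)$ (Corollary \ref{c.l2gen} applied to $\fa_\kappa$), the Cauchy problem $z_\kappa'(t) = \mc L_\kappa z_\kappa(t) + f(t)$, $z_\kappa(0)=u_0$ has a unique mild solution given by
\[
z_\kappa(t) = T_\kappa(t) u_0 + \int_0^t T_\kappa(t-s) f(s)\, \ud s, \qquad t\in [0,t_0],
\]
by \cite[Proposition 3.1.6]{abhn}. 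The candidate limit is obtained by replacing $T_\kappa$ with the limit semigroup; since $\e^{t(Q-C)}P_{H_0}$ is exactly the semigroup $\e^{-t(\fq_0+\fb)}$ on $L^2(\Omega_0)$ (using \eqref{eza} and the identification $H_0 \cong \ell^2_\mu$), the function $\e^{t(Q-C)}P_{H_0}u_0 + \int_0^t \e^{(t-s)(Q-C)}P_{H_0} f(s)\, \ud s$ is precisely the mild solution of $z'(t) = (Q-C)z(t) + P_{H_0}f(t)$, $z(0)=P_{H_0}u_0$ on $H_0$; here one uses that $P_{H_0}$ commutes with $\e^{t(Q-C)}$ because the latter already lives on $H_0$, so $\int_0^t \e^{(t-s)(Q-C)}P_{H_0}f(s)\,\ud s = \int_0^t \e^{-(t-s)(\fq_0+\fb)} f(s)\,\ud s$.

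Next I would handle the convergence of the two terms separately. The first term, $T_\kappa(t) u_0 \to \e^{t(Q-C)}P_{H_0}u_0$, is immediate from Theorem \ref{t.l2conv} for $t>0$. For the integral term, fix $t\in (0,t_0]$ and consider the integrand $g_\kappa(s) \coloneqq T_\kappa(t-s) f(s)$ for $s\in [0,t)$. By Theorem \ref{t.l2conv}, $g_\kappa(s) \to \e^{(t-s)(Q-C)}P_{H_0}f(s)$ in $L^2(\Omega_0)$ for almost every $s$ (precisely, for every $s<t$, which excludes only $s=t$, a null set). Moreover, since each $T_\kappa$ is contractive, $\|g_\kappa(s)\|_{L^2(\Omega_0)} \leq \|f(s)\|_{L^2(\Omega_0)}$, which is an integrable majorant independent of $\kappa$ because $f\in L^1((0,t_0);L^2(\Omega_0))$. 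Hence, by the dominated convergence theorem for Bochner integrals,
\[
\int_0^t T_\kappa(t-s) f(s)\, \ud s \longrightarrow \int_0^t \e^{(t-s)(Q-C)}P_{H_0}f(s)\, \ud s
\]
in $L^2(\Omega_0)$ as $\kappa\to\infty$. Adding the two limits gives $z_\kappa(t) \to \e^{t(Q-C)}P_{H_0}u_0 + \int_0^t \e^{(t-s)(Q-C)}P_{H_0}f(s)\,\ud s$ pointwise on $(0,t_0]$, which is the asserted convergence. The statement for $\mc L_\kappa^*$ follows verbatim, replacing $T_\kappa$ by $S_\kappa$ and $Q$ by $Q^*$, using the second half of Theorem \ref{t.l2conv} and the contractivity of $S_\kappa$ from Corollary \ref{c.l2gen}.

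The only genuinely delicate point is the interchange of limit and integral, and the obstacle there is mild: one must make sure the pointwise convergence $T_\kappa(t-s)f(s)\to \e^{(t-s)(Q-C)}P_{H_0}f(s)$ is available for almost every $s$, which is exactly why Theorem \ref{t.l2conv} is stated for all $t>0$ rather than merely for $t$ in some restricted set — applied with the time parameter $t-s$, it covers every $s\in [0,t)$. The uniform contractivity of the semigroups $T_\kappa$ (equivalently, the uniform sectoriality of the forms $\fa_\kappa$ established in the proof of Theorem \ref{t.conv}) then supplies the fixed integrable majorant $s\mapsto \|f(s)\|_{L^2(\Omega_0)}$, and dominated convergence closes the argument. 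Everything else is bookkeeping: identifying the limit object as the mild solution on $H_0$ and invoking \cite[Proposition 3.1.6]{abhn} once more. If one wished, convergence in $L^2((0,t_0);L^2(\Omega_0))$ of the whole trajectory could be added by a further application of dominated convergence in the $t$-variable, again with the uniform contractivity bound $\|z_\kappa(t)\|_{L^2(\Omega_0)}\leq \|u_0\|_{L^2(\Omega_0)} + \int_0^{t_0}\|f(s)\|_{L^2(\Omega_0)}\,\ud s$ as majorant.
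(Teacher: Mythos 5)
Your proposal is correct and follows exactly the route the paper takes: the paper's proof consists of the single sentence that the result ``is immediate from Theorem \ref{t.l2conv}, formula \eqref{eq.voc} and the dominated convergence theorem,'' and your argument simply spells out those three ingredients, with the contractivity of the semigroups $T_\kappa$ supplying the integrable majorant. No discrepancy to report.
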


\begin{proof} 
This is immediate from Theorem \ref{t.l2conv}, formula \eqref{eq.voc} and the dominated convergence theorem.
\end{proof}

\section{Examples}\label{sec:6}

\subsection{Kinase activity} \label{kinase}
Fast diffusion is a rich source of interesting singular perturbations, see e.g. \cite{osobliwiec}. This is the case, for example, in the following model of kinase activity from \cite{kazlip} (see also \cite{kazlip2}). Let us recall that kinases are enzymes that transport phosphate groups. In doing this, protein kinases transmit signals and control complex processes in cells. In \cite{kazlip}, following \cite{brown}, a cell is modeled as a unit $3d$ ball. All kinases, whether active (i.e., phosphorylated) or inactive, are diffusing inside the ball. Binding a receptor located at the cell membrane (the sphere) by an extracellular ligand is a signal which is to be conveyed to the cell. This is done by the kinases which, when touching the boundary (the sphere) become activated by their interaction with the 
ligand-bound receptors; 
such active kinases diffuse freely into the interior 
of the cell. Simultaneously, they are randomly inactivated when meeting phosphatases which are uniformly distributed over the cell.

\newcommand{\kas}{K^\star}
\newcommand{\kac}{K^{\diamond}}
\newcommand{\mkas}{k^\star}
\newcommand{\mkac}{k^{\diamond}}

In the no feedback case, where all receptors at the membrane are ligand-bound almost simultaneously, reaching a uniform stable concentration $C>0$, the master equation for the concentration $\kas $ of active kinases (after suitable rescaling) is a diffusion-degradation equation 
\begin{equation}\label{ki1a} \frac {\partial \kas }{\partial t} = \kappa \triangle \kas - \kas , \qquad t \ge 0, \end{equation}
with boundary condition 
\begin{equation}\label{ki1b} a C (1 - \kas_{|0} ) = \kappa \frac {\partial \kas}{\partial \nu  }.\end{equation}
Here, $\kappa>0$ is a diffusion coefficient and $a>0$ is a reaction coefficient. $\kas_{|0}$ is the value of $\kas$ at the boundary, $\frac {\partial \kas}{\partial \nu  }$ is the usual normal derivative at the boundary 
and the term $-\kas $ describes random dephosporylation of active kinases. We note that condition \eqref{ki1b} describes an inflow of active kinases from the boundary 
(this boundary condition is missing in \cite{brown} and was introduced in \cite{kazlip}).

One of the aims of both \cite{brown} and \cite{kazlip} is to show that (perhaps somewhat surprisingly) \emph{slow} diffusion may facilitate signal transmission more effectively than fast diffusion. To show this, the authors of \cite{kazlip} 
study the case of infinitely fast diffusion and compare the properties of solutions of the limit equation with those of the original one, 
showing that the infinite diffusion case leads to less effective signal transmission.
To do this, they assume spherical symmetry and argue that the limit equation has to be of the form 
\begin{equation} \label{ki3} 
\frac {\ud \kas}{\ud t} = 3a C (1- \kas ) - \kas , \qquad t\ge 0.\end{equation}
This is interpreted as follows: As diffusion coefficients increase to infinity, the active kinases' distribution becomes uniform over the ball and may be identified with a real function of time, whose dynamics is then described by \eqref{ki3}. 
Nevertheless, the form of the limit equation is quite curious, with particularly  intriguing factor $3$.

In \cite{osobliwiec} a convergence theorem for semigroups on the space of continuous functions has been proved asserting that,
if spherical symmetry suggested in \cite{kazlip} is granted, the solutions of  \eqref{ki1a}--\eqref{ki1b} indeed converge to those to \eqref{ki3} equipped with appropriate initial condition. Here we will show that this convergence for $\kappa \to \infty$ is a special case of our averaging principle. In fact, we can also prove convergence in more general situations, where the ball is replaced by an arbitrary bounded domain, the Laplacian is replaced by a more general diffusion operator and we can consider more general Robin boundary conditions. Note, however, that in our context we obtain convergence in the sense of $L^2$, and not in a space of continuous functions, thus we do not obtain uniform convergence.
   
To fit the above example into our framework, we only need the simplest situation where $\Omega_0$ is not partitioned into 
subregions. Thus, we have $N=1$ and $\Omega_N = \Omega_0$.
Strictly speaking, therefore, in this case we are not dealing with transmission conditions, but merely with (Robin) boundary conditions. On the other hand, boundary conditions may be seen as particular instances of transmission conditions. In other words, the outer boundary of $\Omega_0,$ i.e., the boundary of $\Omega_0$ with its complement, may be thought of as a membrane that is permeable only in one direction.
  
Thus, let $\Omega_0$ be a bounded domain in $\CR^3$ with Lipschitz boundary $\Gamma_0$. We replace
the operator $\kappa \triangle - I$ with $\mc L_{\kappa}$, the $L^2(\Omega_0)$ version of the elliptic operator \eqref{operatorl} with diffusion matrix $A$ replaced by $\kappa A.$  
Moreover, instead of the constant $aC$ in the boundary condition, we consider a non-negative function $\tau \in  L^\infty(\Gamma_0; \CR)$ (playing the role of $\tau_0$ of Section \ref{dpm}). With these generalizations, equations \eqref{ki1a}--\eqref{ki1b}  become
\begin{equation}\label{ki2a} \frac {\partial \kas }{\partial t} = \mc L_{\kappa} \kas , \qquad t \ge 0, \end{equation}
and 
\begin{equation}\label{ki2b} \tau (1 - \kas_{|0} ) = N_0(\kas).\end{equation}
Note that $N_0$ is now the conormal derivative with respect to the matrix $\kappa A$, and so the constant $\kappa$ is no longer visible on the right-hand side of the boundary condition. As in \cite{kazlip}, we are interested in the limit as $\kappa \to \infty$.

To transform this system to a form suitable for application of Theorem \ref{t.l2conv} we consider $\kac$, the concentration of inactive kinases, defined as
\[ \kac = \one_{\Omega_0} - \kas .\]   A straightforward calculation shows that $\kac$ satisfies:
\begin{equation*} \frac {\partial \kac }{\partial t} = \mc L_{\kappa} \kac + \one_{\Omega_0}, \qquad t \ge 0, \end{equation*}
and 
\begin{equation*}- \tau \kac_{|0} = N_0(\kac),\end{equation*}
i.e., an equation of the type Corollary \ref{cor.main} is devoted to (with $\kac$ playing the role of $z_\kappa$).

Since in this case $N=1$, the orthogonal projection onto $H_{0}$ is just the orthogonal projection onto the constant functions given
by $Pu = \frac{1}{\lam (\Omega_0)} \int_{\Omega_0} u \dlam$. Also, the matrices $Q$ and $C$ are real numbers given by
\begin{equation}\label{qu} -q \coloneqq q_{1,1} = -\frac 1{\lam (\Omega_0) } \int _{\Gamma_{0}} \tau_0 \ud \sigma\quad \text{ and } \qquad c_{1,1} = 1,\end{equation}
respectively. Thus, $Q-C = -(q+1)$ and, as a consequence of Corollary \ref{cor.main}, in the limit as $\kappa \to \infty$, 
$\kac (t)$ converges strongly to $\mkac (t) \one_{\Omega_0}$, where $\mkac $ is the  solution of 
\begin{equation*} \frac {\ud \mkac}{\ud t}  = -(q+1) \mkac  + 1, \qquad t \ge 0, \end{equation*}
with initial condition $\mkac (0) = P(\one_{\Omega_0} -K_0^*)= 1 - P\kas_0 $, where $\kas_0$ is the initial concentration of $\kas $. Therefore, $\kas (t)$ converges to $\mkas  (t)\one_{\Omega_0} $ where $\mkas $ is the solution of 
\[ \frac{\ud \mkas }{\ud t} = q (1- \mkas ) - \mkas  , \quad t \ge 0, \qquad \mkas(0)= P\kas_0 .\]
This indeed generalizes the results from \cite{kazlip} and \cite{osobliwiec}, because in the case where $\Omega_0$ is the $3d$ unit ball and $\tau_0$ is the constant function equal to $aC$, we have by \eqref{qu}:
\[ q = aC \, \frac {\text{unit ball's surface area}}{\text{unit ball's volume}} = 3aC. \] 

\subsection{Neurotransmitters} \label{neuro} In modeling dynamics of synaptic depression, one often adopts a widely accepted, if simplified, view that a secretory cell is divided into three subregions, $\Omega_1, \Omega_2$ and $\Omega_3$ corresponding to the so-called immediately available, small and large pools, where neurotransmitters are located. This is also the case in the model of Bielecki and Kalita \cite{bielecki}, in which a terminal bouton, playing the role of our $\Omega_0$, is modeled as a $3d$ region (see Figure \ref{figure2}) and the concentration of (vesicles with) neurotransmitters is described by functions on those subregions. However, no clear distinction between the subregions is made; in particular, no transmission conditions on the borders between pools are imposed, and it appears as if diffusing vesicles with neurotransmitter may freely cross from one pool to the other. For reasons explained in \cite{bobmor}, such a model cannot be easily connected with the older, and apparently better known model of Aristizabal and Glavinovi\v{c} \cite{ag} where the situation is described by three scalars, evolving with time, i.e., by the levels $U_i, i =1,2,3$ 
of neurotransmitters in the pools $\Omega_1$, $\Omega_2$ and $\Omega_3$ respectively. 
Arguably, to draw such a connection, specifying the way the particles may filter from one region to the other is necessary, and it transpires that the appropriate transmission conditions are of the form \eqref{eq.trans} with $b_{k,\ell}\equiv 1$. (One should note here, however, that no physical membranes separating pools exist in the secretory cells, and the interpretation similar to the Newton's Law of Cooling seems to be more suitable, for example the one provided by Fick's law.)

To see that the connection in question is a particular case of our averaging principle, we rewrite the governing equation for the neurotransmitter level $u$  in $L^2(\Omega_0)$  in the form (compare \cite[eq.\ (1)]{bielecki}): 
\begin{equation}\label{neu1} \frac {\partial u }{\partial t} = \mc L_{\kappa} u + \beta u^\sharp, \qquad t \ge 0, 
\end{equation}
where $\beta$ is a measurable, bounded and non-negative function which vanishes everywhere but on $\Omega_3$, and is interpreted as neurotransmitter's production rate (varying in $\Omega_3$), $\mc L_{\kappa}$ is the $L^2(\Omega_0)$ version of the elliptic operator \eqref{operatorl} with $c=\beta$ and diffusion matrix $\kappa A$, and $u^\sharp \in L^2(\Omega_0)$ is a given function interpreted as a balance concentration of vesicles.

Clearly this governing equation is of the form considered in Corollary \ref{cor.main} with $f(t) = \beta u^\sharp $ independent of $t.$ In this case, the space $H_0$ is composed of functions that are constant on each of the three pools (separately), and the projection on this space is a particular case of \eqref{eq.projection} with $\mc N=\{1,2,3\}$. As explained in Section \ref{sect.conv}, $H_0$ is isometrically isomorphic to $\CC^3$ with suitable norm. In particular, since $\beta$ vanishes on $\Omega_1$ and $\Omega_2$, the function
$P \beta u^\sharp $ may be identified with the vector $e = (0,0,e_3) \in \CC^3$ where 
\[  e_3 = \frac 1 {\lam (\Omega_3)} \int_{\Omega_3} \beta u^\sharp \ud \lam . \]
Hence, identifying isomorphic objects, we see that Corollary \ref{cor.main} establishes convergence of solutions of \eqref{neu1} to a $\CC^3$-valued function $u$ solving the equation: 
\[ u'(t) =  (Q- C)u(t) + e .\]

In order to find a more explicit form of the limit matrix $Q$ we note that, because of the special arrangement of pools, $\Omega_3 $ borders only with $\Omega_2$, and $\Omega_1$ is the only region having common border with the complement of $\Omega_0$. As a result (see \eqref{qkl} and recall that we agreed on $b_{k,\ell}\equiv 1$)
 \[ Q = \begin{pmatrix} -q_{10} - q_{12} & q_{12} & 0 \\
q_{21} & - q_{21} - q_{23} & q_{23} \\
0 & q_{32} &- q_{32} \end{pmatrix}, \qquad q_{k,\ell} = \frac {\int_{\Gamma_{k,\ell}}\tau_{k}\ud \sigma }{\lam(\Omega_k)}. \]
Also, diagonal entries of the matrix $C$ are the values of $P\beta$ on the sets $\Omega_1, \Omega_2$
 and $\Omega_3$, respectively, and  since $\beta$ vanishes on $\Omega_1$ and $\Omega_2$, it follows that $C$ acts on a vector
 in $\CC^3$ as coordinate-wise multiplication with the vector $(0,0,c)^{\mathsf T}$, where
 $c= \frac{1}{\lam (\Omega_3)}\int_{\Omega_3} \beta\dlam$.  

So, the limit equation is precisely of the form considered by  Aristizabal and Glavinovi\v{c} for the levels $U_i, i =1,2,3$ (which now may be thought of as coordinates of $u$).  Comparing the entries of the matrix $Q - C$ with the coefficients used by Aristizabal and Glavinovi\v{c}, we may interpret the latter in new terms, see the discussion given in \cite{bobmor} and  compare with eq.\ (7) there. (The apparent discrepancy between our $Q$ and that given in the cited equation (7) is that the latter involves diffusion coefficients. To explain this, we note that transmission conditions in \cite{bobmor} are devised in a slightly different way than here. In particular, in \cite{bobmor} it is not the flux but the ratio flux/diffusion coefficient that is preserved.)

\subsection{Intracellular calcium dynamics}\label{icd} The last example concerns calcium dynamics in eukaryotic cells. 
Calcium plays a crucial role in mediating and recognising signals from the extracellular space 
into various parts of the cell, in particular to the nucleus. 
On the other hand, an elevated concentration  
of calcium ions inside the cytosol is harmful and may induce the cell's apoptosis. 
For that reason it is stored also in intracellular compartments, like 
endoplasmic reticulum or mitochondria. 
(A large amount of calcium is also bound to so-called buffer protein molecules.) 
The  average concentration of free calcium 
inside the cytosol does not exceed 1 \,$\mu M$, while the average concentration of 
calcium inside endoplasmic reticulum and mitochondria may be two orders of magnitude 
bigger \cite{keener}. This is possible due to the action of special pumps, which by using 
different forms of energy can push free calcium into the regions of 
higher concentration, e.g. SERCA pumps (reticulum) or 
mitochondrial sodium-calcium exchangers (MNCX). In this way, cells 
can transport calcium against the diffusional flux.

In some circumstances, oscillations of calcium concentration between the internal stores 
and cytosol are observed. Such oscillations are usually described by means 
of systems of ordinary differential equations (see, e.g., \cite{keener,Marhl,Dyzma}). In these descriptions, inhomogenities in the spatial  distribution of calcium inside the regions 
corresponding to different cell compartments are neglected. 
As we will argue,  our Theorem \ref{t.l2conv} justifies such a simplified description, provided diffusion in the cell is fast.   

To begin with, we assume that the processes of binding and unbinding of calcium ions by buffer 
molecules, characterized by certain parameters 
$k_+>0$ and $k_->0$, respectively, are very fast. This allows applying
the reduction method of Wagner and Keizer \cite{Tsai_Sneyd}, so that equations for 
buffer molecules are neglected. Moreover, for further simplicity, we assume that the 
buffers are immobile, that is to say their diffusion coefficients 
are negligible and that they are uniformly distributed in the space.

Let $\Omega_0 \subset \CR^3$ model the spatial region occupied by the cell, with the exception of its nucleus.  
Let  $\Omega_{1} \subset  \Omega_0$ correspond to the 
region occupied by the endoplasmic reticulum of the cell,  and let  
$\Omega_2 \subset \Omega_0$,  disjoint from $\Omega_1$, correspond to the mitochondria inside the cell. (Usually, there is a number of mitochondria, but to simplify the model we combine the regions occupied by them into a single region.) Finally, let 
\(
\Omega_3 \coloneqq \interior \big( \Omega_0 \setminus \bigcup_{k=1}^{2}\overline{\Omega}_k \big),
\)
correspond to the cytosolic region of the cell (comp. Figure \ref{calcium}).

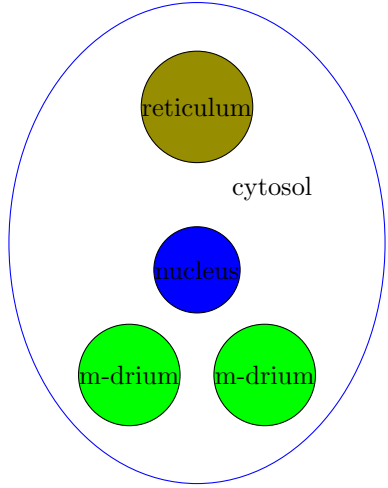
\begin{figure}\begin{tikzpicture} \draw [color=blue] (0,0) ellipse (2.5cm and 3.2cm);
\node [above] at (1,0.5) {\text{cytosol}};
\tikzstyle{every node}=[draw,shape=circle,fill=blue];
\node [above] at (0,-1) {\text{nucleus}};
\tikzstyle{every node}=[draw,shape=circle,fill=green];
\node [above] at (0.9,-2.5) {\text{m-drium}};
\node [above] at (-0.9,-2.5) {\text{m-drium}};
\tikzstyle{every node}=[draw,shape=circle,fill=olive];
\node [above] at (0,1) {\text{reticulum}};

\end{tikzpicture} \caption{The spatial region occupied by the cell with the exception of nucleus (where calcium cannot be stored) corresponds to $\Omega_0$; endoplasmic reticulum corresponds to $\Omega_1$, mitochondria correspond to $\Omega_2$, and $\Omega_3$ is the cytosol. $\Gamma_0=\Gamma_{0,3}$ is equal to the union of  the ellipse and the smallest circle in the center. $\Gamma_2 = \Gamma_{2,3} $ is the union of two circles in the bottom, and $\Gamma_1 = \Gamma_{1,3}$ is the circle at the top.}   \label{calcium} \end{figure}


The concentration  $U$ of free calcium in 
$\Omega_0$ is governed by the following equation \cite{Tsai_Sneyd}: 
\begin{equation} \label{initcalcium}
\frac{\partial U}{\partial t} = a(x) \frac{1}{1+\alpha(x,U)} \triangle U
\end{equation}
where the  positive function $a \in L^\infty ( \Omega_0) $ describes the diffusivity 
of the free calcium ions, i.e., of the ions which are not bound 
to buffer molecules. Since diffusivity does not change within each region, we assume that $a = \sum_{i=1}^3 a_i \one_{\Omega_i}$ for some positive $a_i, i=1,2,3$.

In \eqref{initcalcium}, the factor $\eta(x,U) = \frac{1}{1+\alpha(x,U)}$ where $\alpha $ is a non-negative function, describing the effect of calcium buffering for sufficiently large 
binding and unbinding coefficients $k_+$ and $k_-$, comes into play 
as a result of the Wagner and Keizer reduction method. 
{Let us also note} that due to the fact that the buffer molecules are assumed 
to be immobile, the gradient quadratic term in equation (2.5a) in  \cite {Tsai_Sneyd} 
vanishes. For simplicity we confine ourselves to the case of one representative 
kind of buffers in each of the regions $\Omega_1$, $\Omega_2$ and $\Omega_3$. We allow the coefficients
$k_+$ and $k_-$ to differ in the subregions of the cell, i.e.,\, for them to be functions of $x$. We set $K = k_-/k_+$.
Then, $\alpha $ is given by \[\alpha (x) = b_{\rm{tot}}(x) {K}(x)({K}(x) + U)^{-1},\] where 
$b_{\rm{tot}}$ denotes the total concentration of 
buffering molecules. In general, {also} $b_{\rm{tot}}$ depends on $x$ .
In some situations, however, it may be assumed that 
$\alpha$ does not depend on $U$, and depends on $x$ only via $\Omega_i$. Such an approximation can be justified in the cytosolic 
region by the fact that for typical endogeneous buffers we have 
$k_+ \approx 50 \mu M^{-1} s^{-1} $, $k_- \approx 500 s^{-1} $, so that ${K} \approx 10 \mu M$, 
and the maximal value of $U$ of the order of $1 \mu M$. 
On the other hand, the calcium capacity of reticular and mitochondrial subregions
is very large, so the concentration of calcium $U$ does not change 
significantly in these compartments in the non-apoptotic state of the cell. 
To be able apply the theory developed in this paper, we further simplify the model and assume that {$\eta < 1$} is a constant.

We are  thus lead to the following, reduced form of equation \eqref{initcalcium}:
\begin{equation} \label{initcalcium2}
\frac{\partial U}{\partial t} = \eta a \triangle U;
\end{equation}
in particular, the diffusion matrix $A(x)$ is a $3\times 3$ diagonal matrix with all entries on the diagonal equal $\eta a_i $ in 
$\Omega_i$.

Turning to transmission conditions, we assume -- in accordance with biological reality -- that neither the reticulum nor the mitochondria have common points with the cell's  membrane $\Gamma_0$, and that they do not communicate with each other directly, either. As a result, calcium may only permeate from the cytosol to  reticulum or mitochondria and back to cytosol, or from the cytosol to the extracellular matrix.   
Secondly, we suppose that the functions 
describing the flows through the separating membranes are {\it linear}. (We note, however, that the process of calcium transmission through the cell membrane, as well as 
that through the reticular and mitochondrial boundaries is rather complicated and these functions are, in general, nonlinear. A possible form of the functions 
modulo constant factors can be deduced from \cite{Marhl}, where a three-compartmental 
non-spatial model of calcium dynamics is proposed. An extension of the main theorem of our paper to the case of nonlinear transmission conditions is 
a very interesting topic for future research.) We thus suppose that the transport of calcium through the membranes separating the reticular and mitochondrial subregions from the cytosol  
is governed by the transmission conditions: 
\begin{align} 
\tau_{3} u_{|3} - \tau_1 u_{|1} &= a_1 \frac{\partial u}{\partial \nu},  \qquad  \tau_{3} u_{|3} - \tau_1 u_{|1} = a_3  \frac{\partial u}{\partial \nu}, \quad \text{ on } \Gamma_1 = \Gamma_{1,3} \quad \text{ and }  \nonumber \\
\tau_{3} u_{|3} - \tau_2 u_{|2} &= a_2\frac{\partial u}{\partial \nu},   \qquad  \tau_{3} u_{|3} - \tau_2 u_{|2}= a_3 \frac{\partial u}{\partial \nu}, \quad \text{ on } \Gamma_2 = \Gamma_{2,3} \label{rhomu}
\end{align}
respectively, where $\tau$'s are permeability functions, as in \eqref{newtc}. {Recall that 
$\tau$'s in general depend on $x$.} 

Several remarks are here in order. First of all, we note that since \eqref{initcalcium2} is to describe distribution of calcium ions, we use transmission conditions akin to \eqref{newtc} and not \eqref{eq.trans} (see Remark \ref{dualnosc}). Secondly, the first of the equations in the first line describes the flux
of calcium from the cytosol to the reticulum, while the second describes
the flux of calcium from the reticulum to the cytosol. Thirdly, {in the
first of these equations, $\frac{\partial u}{\partial \nu}$ refers to the
derivative of $u$ in direction of the outer normal of $\Omega_1$, whereas
in the second it refers to the derivative of $u$ in direction of the inner
normal of $\Omega_1$. Note that the inner normal of $\Omega_1$ is the
outer normal of $\Omega_3$.}
The other two equations are interpreted in the same way. 

Additionally, we have the equation
\begin{equation}\label{ccc} -\tau_{3} u_{|3} 
= a_3 \frac{\partial u}{\partial \nu} \quad {\rm on} ~  \Gamma_{0} = \Gamma_{0,3},\end{equation}
governing the 
outflow of free calcium ions through the outer boundary of the cell and through the membrane separating the cytosol from the cell's nucleus. Since the latter part of $\Omega_{0}$ is impermeable for the ions, we assume that $\tau_3 $ vanishes there. 
We note that condition \eqref{ccc} implies that we assume {either that the local free calcium concentration in the extracellular space is zero or the influx of calcium from outside the cell is blocked.} 



We stress that transmission conditions \eqref{rhomu} are not yet of the form \eqref{newtc}, because the right-hand sides are not yet the conormal derivatives for the operator $\eta a \triangle $ appearing in \eqref{initcalcium2}. Biologically, this is a reflection of the fact that only free calcium ions can 
pass through the separating boundaries --   
the buffer molecules (either free or with bound calcium ions) cannot do that. 
Mathematically, to make \eqref{rhomu} 
 compatible with \eqref{eq.trans} we need to multiply all equations by $\eta<1$. 
As we are discussing a model for the densities, it is the adjoint $Q^*$  defined by
\[  Q^* = \eta \begin{pmatrix}  - \frac{ \int_{\Gamma_1} \tau_1 \ud \sigma }{\lam (\Omega_1)} & 0 &
\frac{ \int_{\Gamma_1} \tau_3 \ud \sigma }{\lam (\Omega_1)} \\
0 & - \frac{ \int_{\Gamma_2} \tau_2 \ud \sigma }{\lam (\Omega_2)} & \frac{\int_{\Gamma_2} \tau_3 \ud \sigma }{\lam (\Omega_2)} \\
 \frac{ \int_{\Gamma_1} \tau_1 \ud \sigma }{\lam (\Omega_3)} &  \frac{\int_{\Gamma_2} \tau_2 \ud \sigma }{\lam (\Omega_3)} &  - \frac{  \sum_{i=0}^2 \int_{\Gamma_i}  \tau_3 \ud \sigma} {\lam (\Omega_3)} \end{pmatrix}, \] 
that governs the evolution in the limit. 
Hence, in this approximation, buffer molecules' influence reduces to slowing down the process of communication between reticulum, mitochondria and cytosol. Remarkably, more interesting phenomena are observed even for $\eta$ dependent on $x$ merely via $\Omega_i.$ 

Theorem \ref{t.l2conv} asserts that if $a$ is replaced by $\kappa a$ or if (see Remark \ref{l.markusa}) $a^{(\kappa)}$ is a family of functions indexed by $\kappa$ such that $\sup_{\kappa } a^{(\kappa)} (x) = \infty$  for almost all $x\in \Omega_0$,  then as $\kappa \to \infty$ solutions to \eqref{ugran} become more and more uniform, i.e., `flat',  in each of the regions $\Omega_1,\Omega_2,\Omega_3$. Moreover, if $u_i(t)$ denotes the common value of the limit function at time $t$  in $\Omega_i$ then for the column vector $u(t) $ with coordinates $u_i(t)$ we have
\begin{equation} \label{ugran} u'(t) = Q^* u(t) . \end{equation} 
Alternatively, $v (t) $ with coordinates $v_i(t) = \lambda (\Omega_i) u_i(t) $ (total probability mass in the $i$th region) satisfies 
\begin{equation*}  v'(t) = Q^\mathsf{T} v(t) , \end{equation*}   
where $Q^\mathsf{T}$ is the transpose of $Q$.

The form of the limit system (\ref{ugran}) agrees with the following heuristic reasoning. Suppose that $u (t) =\sum_{i=1}^3 u_i(t) \one_{\Omega_i}$ is a solution to 
\eqref{initcalcium2} with transmission conditions \eqref{rhomu} and \eqref{ccc} and $a$ replaced by $\kappa a.$  Then, using the Gauss theorem, we see that 
\[ \lam (\Omega_1) u_1'(t) = - \int_{\Gamma_1} \eta \tau_{1} \ud \sigma \, u_1(t) + \int_{\Gamma_1} \eta \tau_{3} \ud \sigma \, u_3(t), \] 
and dividing by $\lam (\Omega_1)$ leads to the first equation in \eqref{ugran}.
Similarly, we check that the second and third equations of (\ref{ugran}) 
agree with the result of formal integration based on the Gauss theorem.

\section{Extension to the \texorpdfstring{$L^p$}{}-scale}\label{extension}

We note that in the context of stochastic processes, the Hilbert space setting is not natural. Indeed, the approriate norm for distributions of random variables is the $L^1$-norm, as the integral over the density yields the total mass. This suggests that we should study
the semigroups $S_\kappa$ not on the space $L^2(\Omega_0)$, but on the space $L^1(\Omega_0)$. Likewise, the proper space on which to consider the semigroups $T_\kappa$ is $L^\infty(\Omega_0)$.

Moreover, one would expect these semigroups to have additional properties such as positivity and contractivity. In this section, we first establish these additional properties. This will allow us to extrapolate our semigroups to the whole $L^p$-scale. Also our convergence results extend to these spaces.

\subsection{Generation results} 

As in Section \ref{sect.gen} we once again write $\fa$ instead of $\fa_1$ and $\fq$ instead of $\fq_1$ to simplify notation. We begin by establishing additional properties of the semigroups $T_2$ and $S_2$ whose existence follows from Corollary \ref{c.l2gen}. 

\begin{prop}\label{p.properties}
The semigroup $T_2$ has the following properties.
\begin{enumerate}
[(a)] 
\item $T_2$  is real, i.e.,\ if $u \in L^2(\Omega_0)$ is real-valued then so is $T_2(t)u$ for all $t\geq 0$;
\item $T_2$ is positive, i.e.,\ if $u \geq 0$ almost everywhere then $T_2(t)u \geq 0$ almost everywhere for all $t\geq 0$;
\item $T_2$ is $L^\infty$-contractive, i.e.,\ if $u\in L^\infty(\Omega_0)$ then for every
$t\geq 0$ we have $T_2(t)u \in L^\infty(\Omega_0)$ and $\|T_2u\|_\infty \leq \|u\|_\infty$.
\end{enumerate}
The semigroup $S_2$ is also real, positive and $L^\infty$-contractive.
\end{prop}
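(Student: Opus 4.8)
The plan is to obtain the three properties of $T_2$ from Ouhabaz's variational criteria for invariance of a closed convex set under the semigroup associated with a sectorial form (see \cite[Chapter~2]{ouh05}), applied to $\fa=\fq+\fb$ of Definition~\ref{def.forms}. Each criterion splits into a ``domain part'' and an ``inequality part'', and the domain part is the same in all three cases: $D(\fa)=\ha$ is stable under $u\mapsto\bar u$, under $u\mapsto u^+$, and under $u\mapsto u\wedge 1$ (hence under clamping to $[0,1]$). This holds because each restriction $u|_{\Omega_k}$ lies in $H^1(\Omega_k)$, that space is stable under these lattice operations (note $\one_{\Omega_0}|_{\Omega_k}\in H^1(\Omega_k)$ since $\Omega_k$ is bounded), and the trace operator commutes with $(\cdot)^+$ and with $\,\cdot\wedge 1$, so that $(u^+)_{|k}=(u_{|k})^+$ and $(u\wedge1)_{|k}=u_{|k}\wedge1$. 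First I would dispose of reality: as $A$ is symmetric and $A,c,\tau_k,b_{k,\ell}$ are real-valued, a one-line computation gives $\fa[\bar u,\bar v]=\overline{\fa[u,v]}$, which is the inequality part of the reality criterion.

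For positivity, reality being known, it suffices to verify $\fa[u^+,u^-]\le0$ for real $u\in\ha$. The bulk form $\fq$ contributes $0$: the gradients $\nabla u^+$ and $\nabla u^-$ have disjoint supports, so the associated Dirichlet integral vanishes, and $u^+u^-=0$ kills the potential term. In $\fb$ the ``diagonal'' contributions $\tau_k(u^+)_{|k}(u^-)_{|k}$ vanish on every $\Gamma_{k,\ell}$ (the traces have disjoint supports), and what remains are the off-diagonal terms $-\tau_kb_{k,\ell}(u^+)_{|\ell}(u^-)_{|k}$, each $\le0$ because all four factors are non-negative. Hence $\fa[u^+,u^-]\le0$ and $T_2$ is positive.

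The main work is $L^\infty$-contractivity. Since $T_2$ is positive, this is equivalent to invariance of $\{v:0\le v\le1\}$, whose metric projection maps a real $u$ to $(u\wedge1)^+$, with $u-(u\wedge1)^+=(u-1)^+-u^-$; so the criterion becomes $\fa\bigl[(u\wedge1)^+,\,(u-1)^+-u^-\bigr]\ge0$. As before all gradient terms disappear, and the potential term collapses to $\int_{\Omega_0}c\,(u-1)^+\,\dlam\ge0$. The $u^-$-part of $\fb$ is handled exactly as in the positivity step (its sign makes it contribute $\ge0$). For the $(u-1)^+$-part, using $((u\wedge1)^+)_{|k}=1$ on $\{u_{|k}>1\}$, the diagonal contribution equals $\sum_{k,\ell}\int_{\Gamma_{k,\ell}}\tau_k\,(u_{|k}-1)^+\,\ud\sigma$, while the off-diagonal contribution $-\sum_{k,\ell}\int_{\Gamma_{k,\ell}}\tau_k\,b_{k,\ell}\,((u\wedge1)^+)_{|\ell}\,(u_{|k}-1)^+\,\ud\sigma$ is bounded in absolute value by the diagonal one, since $0\le b_{k,\ell}\le1$ and $0\le((u\wedge1)^+)_{|\ell}\le1$. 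I expect this domination to be the crux of the proof: it works precisely because in the definition of $\fb$ the \emph{same} weight $\tau_k$ multiplies the diagonal trace $u_{|k}$ and the off-diagonal trace $u_{|\ell}$ on $\Gamma_{k,\ell}$, and because the killing probabilities obey $b_{k,\ell}\le1$. This yields $\fa[(u\wedge1)^+,(u-1)^+-u^-]\ge0$, so $T_2$ is $L^\infty$-contractive.

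It remains to transfer these to $S_2=T_2^*$. Reality and positivity of $S_2$ follow at once by testing against real, resp.\ non-negative, functions and invoking the corresponding properties of $T_2$. For the last property one should observe that $L^\infty$-contractivity of $T_2$ is, via the duality $(L^1)^*=L^\infty$, the same as $L^1$-contractivity of $S_2$; this is the natural contractivity for $S_2$, in line with its role described in Remark~\ref{dualnosc} as the Fokker--Planck semigroup transporting probability densities (the direct Ouhabaz criterion for $\fa^*=\fq+\fb^*$ also gives reality and positivity verbatim, using the second representation of $\fb^*$ in Remark~\ref{dualnosc}, but for the $L^\infty$/$L^1$ bound the off-diagonal trace on $\Gamma_{k,\ell}$ now carries the weight $\tau_\ell$ instead of $\tau_k$, so the domination above must be rephrased accordingly).
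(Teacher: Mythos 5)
Your treatment of $T_2$ is correct. Parts (a) and (b) are essentially the paper's own argument (reality from the realness of all coefficients; positivity from $\fa[u^+,u^-]\le 0$, using Stampacchia's lemma for the gradient term and the compatibility of traces with lattice operations for the boundary term). For (c) you take a genuinely different, somewhat more elementary route: the paper applies Ouhabaz's complex $L^\infty$-contractivity criterion directly, verifying $\Re\fa[(1\wedge|u|)\sgn u,(|u|-1)^+\sgn u]\ge 0$ and controlling the off-diagonal boundary terms via $\Re\sgn(u_{|\ell}\overline{u_{|k}})\in[-1,1]$ and $b_{k,\ell}\le 1$, whereas you use positivity together with invariance of the order interval $[0,\one_{\Omega_0}]$, which lets you work with real $u$ and the projection $(u\wedge 1)^+$. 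Both proofs succeed for exactly the reason you isolate: on $\Gamma_{k,\ell}$ the off-diagonal trace carries the \emph{same} weight $\tau_k$ as the diagonal one, and $b_{k,\ell}\le 1$. The only point to tidy up is the reduction to real $u$ in the invariance criterion, which is legitimate once reality of $T_2$ is known (this is in \cite[Chapter 2]{ouh05}).

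Where you and the paper part ways is the final claim about $S_2$. The paper dismisses it with ``similar calculations for $\fa^*$''; you obtain reality and positivity of $S_2=T_2^*$ by duality, obtain $L^1$-contractivity of $S_2$ (dual to the $L^\infty$-contractivity of $T_2$), and explicitly flag that the domination argument does not transfer verbatim to $\fb^*$. Your hesitation is well founded, and you should not try to paper over it: running the computation for $\fb^*$ and regrouping by the factor $(|u_{|k}|-1)^+$, the relevant quantity is $\sum_k\int_{\Gamma_k}\bigl(\tau_k-\sum_\ell\one_{\Gamma_{k,\ell}}\tau_\ell b_{\ell,k}\bigr)(|u_{|k}|-1)^+\,\ud\sigma$, which is nonnegative for all $u$ only under the additional hypothesis that the influx rate into each $\Omega_k$ does not exceed its outflux rate (e.g.\ $\tau_\ell b_{\ell,k}\le\tau_k$ on $\Gamma_{k,\ell}$); testing with $u\equiv 1+\eps$ on one $\Omega_k$ and $u\equiv 1$ elsewhere shows an integrated version of this condition is also necessary. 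The paper does not assume it, and indeed $L^\infty$-contractivity of $S_2$ should fail in general: $S$ transports densities, which concentrate in small compartments, and consistently the limit semigroup $\e^{tQ^*}P_{H_0}$ of Theorem \ref{t.l2conv} fails to be $L^\infty$-contractive when some $\lambda(\Omega_k)$ is small relative to the flux into $\Omega_k$. So the one part of the statement your proposal does not establish is a part that is not correct as stated; what you do prove --- $L^1$-contractivity of $S_2$, equivalently $L^\infty$-contractivity of $T_2$ --- is what the extrapolation in Section \ref{extension} actually needs to construct $S_1$ and $T_\infty$.
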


\begin{proof}
The proof of all three parts is based on the Ouhabaz' criterion \cite[Theorem 2.2]{ouh05} and its corollaries.
 We do the necessary calculations for the form 
$\fa$. Similar calculations {for $\fa^*$} yield the corresponding properties for the semigroup $S_2$.\medskip

(a) Obviously, if $u\in D(\fa) = \ha $ 
then also $\Re u, \Im u \in D(\fa)$. 
As all the coefficients in $A$, $c$, $\tau_0, \ldots, \tau_n$ {and $b_{k,\ell}$ ($1\leq k,\ell\leq N$)} are real-valued, 
$\fa[\Re u, \Im u] \in \CR$ for all $u \in D(\fa)$. Now part (a) follows from \cite[Proposition 2.5]{ouh05}. \medskip 

(b) Since $\ha $  
is a lattice with respect to the usual ordering, it follows that if $u\in D(\fa)$ is real-valued then also
$u^+ \in D(\fa)$ {and $u^+u^-=0$ almost everywhere}. Moreover, {by Stampaccia's Lemma \cite[Lemma 7.6]{gt}},
 we have $\partial_j u^+ = (\partial_j u)\one_{\{u>0\}}$ so that, in particular, we have
$(A\nabla u^+)\cdot {\overline{\nabla u^-}} = 0$ almost everywhere. 
Note that we have $(u_{|k})^+= (u^+)_{|k}$ for any $k=0, \ldots, N$, i.e.,\ the trace of the positive part of $u$ is the 
positive part of the trace of $u$. From this it follows that $u_{|k}^+ u_{|k}^-=0$ almost everywhere for $k=0, \ldots, N$. 
Using this we see that for a real-valued $u\in D(\fa)$ we have
\[
\fa [u^+, u^-] = -\sum_{k,\ell \in \mc N} \int_{\Gamma_{k,\ell}} \tau_k {b_{k,\ell}}u_{|\ell}^+u_{|k}^-\, \ud \sigma \leq 0
\]
{as $\tau_k {b_{k,\ell}} \geq 0$ almost surely.}
It now follows from \cite[Theorem 2.6]{ouh05} that $T_2$ is positive.\medskip

(c) In view of \cite[Theorem 2.13]{ouh05} for this part we have to show that if $u\in D(\fa)$ then we also have that
$(1\wedge |u|)\sgn u \in D(\fa)$  and that 
\begin{equation}\label{eq.ouha}
\Re \fa [ (1\wedge |u|)\sgn u, (|u|-1)^+\sgn u] \geq 0.
\end{equation}
Here we have $\sgn z \coloneqq \frac{z}{|z|}$ for a complex $z\neq 0$ and $\sgn 0 \coloneqq 0$.
The condition that $(1\wedge |u|)\sgn u \in D(\fa)$ follows from standard properties of $H^1$-functions, see e.g.\ 
the proof of \cite[Theorem 4.6]{ouh05}. 
As in the proof of that result, we see that
\[
\Re {\fq} [(1\wedge |u|)\sgn u, (|u|-1)^+\sgn u] \geq 0.
\] 
Let us now take care of $\fb$. Writing
\[
I_{k,\ell} \coloneqq \int_{\Gamma_{k,\ell}} \tau_k \big( (1\wedge |u_{|k}|)\sgn u_{|k} - b_{k,\ell}
(1\wedge |u_{|\ell}|)\sgn u_{|\ell}\big) (|u_{|k}|-1)^+\overline{\sgn u_{|k}}\, \ud \sigma
,\]
we have
\begin{align*} 
 \Re \fb  [(1\wedge |u|)\sgn u, (|u|-1)^+\sgn u] = \Re \sum_{\substack{k \in \mc N\\\ell\in \mc N_0}} I_{k,\ell}. \end{align*}
Since the integrand in $I_{k,\ell}$ vanishes on the set $\{|u_{|k}| < 1\}$ we see that $I_{k,\ell}$
equals
\begin{align*}
& \int_{\Gamma_{k,\ell}\cap\{|u_{|k}|\geq 1\}} \tau_k \left [ (1\wedge |u_{|k}|)\sgn u_{{|k}} - b_{k,\ell}(1\wedge |u_{|\ell}|)\sgn u_{|\ell}\right ] (|u_{|k}|-1)^+\overline{\sgn u_{|k}}\, \ud \sigma\\
 &\quad = \int_{\Gamma_{k,\ell}\cap\{|u_{|k}|\geq 1\}} \tau_k \big [ 1- {b_{k,\ell}}(1\wedge |u_{|\ell}|)\, \sgn (u_{|\ell} \,  \overline{u_{|k}})\big ] (|u_{|k}|-1)^+\, \ud \sigma.
\end{align*}
Since $\Re\sgn (u_{|\ell}\overline{u_{|k}}) \in [-1,1]$ and $0\leq b_{k,\ell}\leq 1$, it follows that $\Re I_{k,\ell} \geq 0$ so that, alltogether, we have proved \eqref{eq.ouha}. This finishes the proof.
\end{proof}

We can now extend the semigroups $T_2$ and $S_2$ to the scale of $L^p$-spaces ($1\leq p \leq \infty$). {A family of semigroups $T_p$ $(1\leq p \leq \infty)$, where $T_p$ acts on the space $L^p$, is called \emph{consistent}, if for each choice of
$1\leq p, q \leq \infty$ we have $T_p(t)f= T_q(t)f$ for all $f \in L^p\cap L^q$ and $t\geq 0$.}

\begin{cor}\label{c.lp}
There are consistent families $T_p$ and $S_p$ of contraction semigroups on $L^p(\Omega_0)$ for $1\leq p \leq \infty$.
For $1\leq p <\infty$ these semigroups are strongly continuous whereas $T_\infty$ and $S_\infty$ are merely weak$^*$-continuous. 
Moreover, we have $T_p^* = S_q$ where $\frac{1}{p}+\frac{1}{q}=1$ with the convention that $\frac{1}{\infty}= 0$.
\end{cor}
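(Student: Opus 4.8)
The plan is to bootstrap from the $L^2$-theory of Proposition \ref{p.properties} and Corollary \ref{c.l2gen} by a duality argument followed by Riesz--Thorin interpolation. Throughout one uses that $\Omega_0$ is bounded, so that $L^\infty(\Omega_0)\subset L^q(\Omega_0)\subset L^p(\Omega_0)\subset L^1(\Omega_0)$ whenever $1\le p\le q\le\infty$; in particular $T_2(t)$ and $S_2(t)$ are already defined on all of $L^\infty(\Omega_0)$, and $L^\infty(\Omega_0)$ is dense in $L^p(\Omega_0)$ for $p<\infty$.

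\emph{Step 1: $L^1$-contractivity.} By Corollary \ref{c.l2gen}, $S_2(t)=T_2(t)^*$, and by Proposition \ref{p.properties}(c) the semigroup $S_2$ is $L^\infty$-contractive. Dualizing, for $u\in L^1(\Omega_0)\cap L^2(\Omega_0)$,
\[
\|T_2(t)u\|_1=\sup_{\|v\|_\infty\le 1,\, v\in L^\infty\cap L^2}|\langle T_2(t)u,v\rangle|=\sup|\langle u,S_2(t)v\rangle|\le\|u\|_1 ,
\]
so $T_2$ is $L^1$-contractive; symmetrically $S_2$ is $L^1$-contractive because $T_2$ is $L^\infty$-contractive. (Positivity of $T_2,S_2$ is likewise inherited by the extensions below.)

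\emph{Step 2: interpolation and extension.} For each fixed $t\ge 0$, $T_2(t)$ is simultaneously $L^1$-, $L^2$- and $L^\infty$-contractive on the common domain $L^\infty(\Omega_0)$, so by the Riesz--Thorin theorem it is $L^p$-contractive there for every $p\in[1,\infty]$. For $1\le p<\infty$ we extend it by density to a contraction $T_p(t)$ on $L^p(\Omega_0)$, and we set $T_\infty(t):=T_2(t)$ on $L^\infty(\Omega_0)$; the family $S_p$ is defined analogously from $S_2$. The semigroup law $T_p(t+s)=T_p(t)T_p(s)$ and consistency ($T_p(t)f=T_q(t)f$ for $f\in L^p\cap L^q$) follow at once, since both sides agree with $T_2(t)$ on the dense subspace $L^\infty(\Omega_0)$ and all operators involved are contractions.

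\emph{Step 3: continuity and the adjoint relations.} For $1\le p\le 2$, $L^2(\Omega_0)\hookrightarrow L^p(\Omega_0)$ continuously, so strong continuity of $T_2$ in $L^2$ yields $L^p$-continuity of $t\mapsto T_p(t)u$ for $u\in L^2$, hence, by density and uniform boundedness, for all $u\in L^p$. For $2<p<\infty$ and $u\in L^\infty(\Omega_0)$,
\[
\|T_p(t)u-u\|_p^p=\int_{\Omega_0}|T_2(t)u-u|^{p-2}\,|T_2(t)u-u|^2\,\dlam\le(2\|u\|_\infty)^{p-2}\,\|T_2(t)u-u\|_2^2\xrightarrow[t\to0]{}0 ,
\]
giving strong continuity again by density. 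For $p=\infty$: since $S_1$ is a $C_0$-semigroup on $L^1(\Omega_0)$ and $(L^1)^*=L^\infty$, the adjoint $S_1(\cdot)^*$ is a weak$^*$-continuous semigroup on $L^\infty(\Omega_0)$, and testing against $L^1\cap L^\infty$ together with $S_2=T_2^*$ identifies $T_\infty(t)=S_1(t)^*$; likewise $S_\infty(t)=T_1(t)^*$. Finally, for $1\le p<\infty$ with conjugate $q$, the identity $\langle T_p(t)u,v\rangle=\langle T_2(t)u,v\rangle=\langle u,S_2(t)v\rangle=\langle u,S_q(t)v\rangle$ for $u\in L^p\cap L^2$, $v\in L^q\cap L^2$ extends by density to $L^p\times L^q$, so $T_p(t)^*=S_q(t)$; the remaining case $p=\infty$ is the relation $T_\infty(t)=S_1(t)^*$ just obtained.

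\emph{Main obstacle.} There is no deep estimate here; the real content was in Proposition \ref{p.properties}. The two points needing care are deriving $L^1$-contractivity of $T_2$ from $L^\infty$-contractivity of its adjoint $S_2$ \emph{before} invoking interpolation, and keeping the direction of the duality straight in Step 3 so that the relations $T_p^*=S_q$ and the weak$^*$-continuity of $T_\infty,S_\infty$ come out consistently.
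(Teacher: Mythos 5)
Your proof is correct and rests on the same pillars as the paper's (Proposition \ref{p.properties}, duality, Riesz--Thorin, density), but the technical route differs in ways worth noting. You first transfer $L^\infty$-contractivity of $S_2$ into $L^1$-contractivity of $T_2$ by duality and then interpolate over the whole range $[1,\infty]$ in one stroke; the paper instead interpolates only on $[2,\infty]$, constructs $S_1$ and $T_1$ by a separate duality argument, and reaches $1<p<2$ by taking adjoints again. More substantially, your continuity proofs are direct norm estimates: for $p\le 2$ you exploit the continuous embedding $L^2(\Omega_0)\hookrightarrow L^p(\Omega_0)$ of the bounded domain, and for $2<p<\infty$ the elementary inequality $\|f\|_p^p\le\|f\|_\infty^{p-2}\|f\|_2^2$; this avoids the paper's appeal to the ``weakly continuous implies strongly continuous'' theorem \cite[Theorem I.5.8]{en}, which the paper must moreover apply with the caveat that weak continuity is only verified on a dense subset. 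Likewise you obtain weak$^*$-continuity of $T_\infty$ structurally, as the adjoint of the $C_0$-semigroup $S_1$ (which your $p\le 2$ argument has already delivered, so there is no circularity), whereas the paper proves it by extracting an a.e.\ convergent subsequence and applying dominated convergence, and only afterwards feeds this back into the continuity of $S_1$. Both arguments are complete; yours is arguably more elementary at the continuity steps, at the cost of using the boundedness of $\Omega_0$ (via the nesting of the $L^p$ spaces) somewhat more heavily.
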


\begin{proof}
As we have seen, $T_2$ is a contraction semigroup and it follows from Proposition \ref{p.properties} (c) that it restricts to a contraction semigroup $T_\infty$ on $L^\infty(\Omega_0)$. As a consequence of the Riesz--Thorin interpolation theorem (see e.g. \cite[Theorem 6.27]{folland}), $T_2$ restricts to a contraction semigroup $T_p$ on every $L^p(\Omega_0)$ for $2<p<\infty$. 

Let us prove that $T_\infty$ is weak$^*$-continuous. To that end, let $u\in L^\infty(\Omega_0)$ and $t_n \to 0+$. Since $T_2$ is strongly continuous we have $T_{2}(t_n)u\to u$ in $L^2(\Omega_0)$. Passing to a subsequence, we may (and shall) assume that $T_{2}(t_n)u \to u$ pointwise almost everywhere. Since the sequence $\left (T_{2}(t_n)u\right )_{n \ge 1}$ is bounded in the $\|\cdot\|_\infty$-norm it follows from the dominated convergence theorem that
\[
\int_{\Omega_0} \left ( T_2(t_n)u \right ) \bar v\dlam \to \int_{\Omega_0} u \bar v\dlam
\]
for every $v\in L^1(\Omega_0)$. This proves that $T_\infty(t_n)u=T_2(t_n)u$ converges in the weak$^*$-topology to $u$, so that 
$T_\infty$ is weak$^*$-continuous. {We note that $T_\infty$ is never strongly continuous. Indeed, a general result
due to Lotz \cite{lotz} shows that every strongly continuous semigroup on $L^\infty(\Omega_0)$ is automatically uniformly continuous 
and thus has a bounded generator. In our situation this would yield $L^\infty(\Omega_0) \subset \ha$ which is absurd.}

We now turn to continuity of the semigroups $T_p$ for $2<p<\infty$. Let $q$ be the conjugate of $p$.   Since $L^q (\Omega_0) \subset L^1 (\Omega_0)$, and $T_pu=T_\infty u$ for $u\in L^p(\Omega_0)\cap L^\infty(\Omega_0)$
it follows from the above that $T_p(t)u \to u$, as $t\to 0+$, weakly {in $L^p(\Omega_0)$} 
for all $u \in L^p(\Omega_0)\cap L^\infty(\Omega_0)$. As is well known
a weakly continuous semigroup is strongly continuous, see \cite[Theorem I.5.8]{en}. Actually, to use that theorem, we would need to prove that  $t\mapsto T_p(t)u$ is weakly continuous for every $u \in L^p(\Omega)$. However, inspection of the proof shows that for a bounded semigroup it actually suffices to prove weak continuity of the orbits for $u$ in a dense subset. This shows that $T_p$ is strongly continuous.

The same argument yields consistent semigroups $S_p$ for $2\leq p \leq \infty$ where $S_p$ is strongly continuous
and $S_\infty$ is weak$^*$-continuous.\smallskip

We next prove that $T_\infty$ is an adjoint semigroup. To that end, let $u\in L^2(\Omega_0)$ and $v\in L^\infty(\Omega_0)$. Since
$S_2^* = T_2$ we find
\[
\Big|\int_{\Omega_0} (S_2(t)u) \bar v \dlam\Big| = \Big|\int_{\Omega_0} u\overline{T_2(t)v}\dlam\Big|
\leq \|u\|_1 \|T_2(t) v\|_\infty \leq \|u\|_1\|v\|_\infty.
\]
Taking the supremum over $v \in L^\infty(\Omega_0)$ with $\|v\|_\infty\leq 1$, we see that
$S_2(t)u \in L^1(\Omega_0)$ and $\|S_2(t)u\|_1 \leq 1$. Thus, $S_2(t)$ can be extended to a contraction
$S_1(t)$ on $L^1(\Omega_0)$. Clearly, $[S_1(t)]{^*} = T_\infty(t)$ which proves that $T_\infty$ consists of adjoint operators. It follows
from the weak$^*$-continuity of $T_\infty$ that the orbits of $S_1$ are weakly continuous hence, by \cite[Theorem I.5.8]{en},
$S_1$ is a strongly continuous semigroup. Similarly, $T_2$ extends to a strongly continuous contraction semigroup $T_1$ on $L^1(\Omega_0)$
with $T_1^* = S_\infty$.

Finally, in an analogous way we get $S_p = T_q^*$ and $T_p = S_q^*$ for $1<p<2$ where $q\in (2,\infty)$ is such that $\frac{1}{p}+\frac{1}{q} =1$. It follows that all semigroups $T_p $ (and $S_p$), $ p \in (1,\infty)$ are weakly continuous, and hence also strongly continuous.
\end{proof}

\subsection{Convergence results}

Applying Corollary \ref{c.lp} for every $\kappa$, we obtain for every $p \in [1,\infty]$ families $T_{p,\kappa}$ and $S_{p,\kappa}$
of semigroups. For $p=2$, convergence of these semigroups was established in Section \ref{sect.conv}. Let us note that 
the space $H_{0}$ which appears in the limit semigroup is contained in $L^p(\Omega_0)$ for every $1\leq p \leq \infty$. Moreover, 
the right hand side of \eqref{eq.projection} is well defined also for $u$ in $L^p(\Omega_0)$ and defines a projection on
 $L^p(\Omega_0)$ with range $H_{0}$. By slight abuse of notation, we denote that projection still by $P_{H_0}$. Thus, we may view $\e^{t(Q-C)}P_{H_0}$ and $\e^{t(Q^*-C)}P_{H_0}$ as degenerate semigroups on $L^p(\Omega)$. The main result of this section (which, along with Theorem \ref{t.l2conv}, is the main result of the paper as well) extends Theorem \ref{t.l2conv} to the setting of $L^p$ spaces.  

\begin{thm}\label{t.lpconv}
For $1\leq p < \infty$, $t>0$ and $u \in L^p(\Omega_0)$ we have 
\[ \grak T_{p,\kappa}(t)u = \e^{t(Q-C)}P_{H_0} u \quad \mbox{and} \grak S_{p,\kappa}(t) u = \e^{t(Q^*-C)}P_{H_0} u\]  in the
$L^p(\Omega)$ norm. Moreover,
for $u\in  L^\infty(\Omega_0)$  we have 
\[\grak T_{\infty,\kappa}(t)u = \e^{t(Q-C)}P_{H_0} u  \quad\mbox{and}\quad \grak S_{\infty, \kappa}(t)u = \e^{t(Q^*-C)}P_{H_0} u\] 
in the weak$^*$ topology of $L^\infty(\Omega_0)$.
\end{thm}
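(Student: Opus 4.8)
The plan is to bootstrap everything from the $L^2$-statement of Theorem \ref{t.l2conv}, using the consistency of the families $T_{p,\kappa}$, $S_{p,\kappa}$ and the fact that \emph{all} of them are contraction semigroups (Corollary \ref{c.lp}), together with routine density and interpolation arguments. I would treat the ranges $1\le p\le 2$, $2<p<\infty$ and $p=\infty$ separately; the case $p=2$ is Theorem \ref{t.l2conv}. Throughout, $t>0$ is fixed.

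\medskip\noindent\textbf{The range $1\le p\le 2$.} Since $\Omega_0$ is bounded, Hölder's inequality gives $\|f\|_p\le \lambda(\Omega_0)^{1/p-1/2}\|f\|_2$, so convergence in $L^2(\Omega_0)$ implies convergence in $L^p(\Omega_0)$. For $u\in L^2(\Omega_0)$, consistency yields $T_{p,\kappa}(t)u=T_{2,\kappa}(t)u$, and Theorem \ref{t.l2conv} gives $T_{2,\kappa}(t)u\to \e^{t(Q-C)}P_{H_0}u$ in $L^2$, hence in $L^p$. Because $L^2(\Omega_0)$ is dense in $L^p(\Omega_0)$, the operators $T_{p,\kappa}(t)$ are contractions and $\e^{t(Q-C)}P_{H_0}$ is bounded on $L^p(\Omega_0)$, a $3\varepsilon$-argument upgrades this to all $u\in L^p(\Omega_0)$. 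The same reasoning applies to $S_{p,\kappa}$.

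\medskip\noindent\textbf{The range $2<p<\infty$.} Here I would use the interpolation inequality $\|f\|_p\le \|f\|_2^{2/p}\,\|f\|_\infty^{1-2/p}$. For $u\in L^\infty(\Omega_0)$, consistency and Theorem \ref{t.l2conv} give $T_{p,\kappa}(t)u=T_{2,\kappa}(t)u\to \e^{t(Q-C)}P_{H_0}u$ in $L^2$, while the $L^\infty$-contractivity of $T_{\infty,\kappa}$ (Corollary \ref{c.lp}) and the $L^\infty$-boundedness of $\e^{t(Q-C)}P_{H_0}$ give a bound $\|T_{p,\kappa}(t)u-\e^{t(Q-C)}P_{H_0}u\|_\infty\le C\|u\|_\infty$ uniform in $\kappa$. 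Inserting these into the interpolation inequality yields $T_{p,\kappa}(t)u\to \e^{t(Q-C)}P_{H_0}u$ in $L^p$ for every $u\in L^\infty(\Omega_0)$ (alternatively, one may argue by dominated convergence along subsequences converging a.e., using the uniform $L^\infty$ bound as majorant). Since $L^\infty(\Omega_0)$ is dense in $L^p(\Omega_0)$ and the $T_{p,\kappa}(t)$ are contractions, the $3\varepsilon$-argument again extends the convergence to all of $L^p(\Omega_0)$; likewise for $S_{p,\kappa}$.

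\medskip\noindent\textbf{The case $p=\infty$.} Weak$^*$-convergence is obtained by duality. By Corollary \ref{c.lp}, $T_{\infty,\kappa}(t)=(S_{1,\kappa}(t))^*$, so for $u\in L^\infty(\Omega_0)$ and $v\in L^1(\Omega_0)$ we have
\[
\int_{\Omega_0}\bigl(T_{\infty,\kappa}(t)u\bigr)\bar v\dlam=\int_{\Omega_0}u\,\overline{S_{1,\kappa}(t)v}\dlam .
\]
By the case $p=1$ already established, $S_{1,\kappa}(t)v\to \e^{t(Q^*-C)}P_{H_0}v$ in $L^1(\Omega_0)$, and since $u\in L^\infty(\Omega_0)$ the right-hand side converges to $\int_{\Omega_0}u\,\overline{\e^{t(Q^*-C)}P_{H_0}v}\dlam$. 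It remains to identify this limit with $\int_{\Omega_0}\bigl(\e^{t(Q-C)}P_{H_0}u\bigr)\bar v\dlam$, i.e.\ to check that $\e^{t(Q-C)}P_{H_0}$ and $\e^{t(Q^*-C)}P_{H_0}$ are mutually adjoint for the $L^1$--$L^\infty$ pairing. This holds because $P_{H_0}$ is self-adjoint for this pairing (it averages over each $\Omega_k$), because $C$ is a real diagonal matrix, and because $Q^*$ is the adjoint of $Q$ with respect to $\langle\cdot,\cdot\rangle_{\ell^2_\mu}$ — the pairing that the isometry $\Phi$ transports from $\ell^2_\mu$ to the $L^1$--$L^\infty$ pairing restricted to $H_0$ — which is exactly the content of \eqref{eq.q} and \eqref{eq.qstar}. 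Hence $T_{\infty,\kappa}(t)u\to \e^{t(Q-C)}P_{H_0}u$ weak$^*$ in $L^\infty(\Omega_0)$, and the analogous statement for $S_{\infty,\kappa}$ follows in the same way using $S_{\infty,\kappa}(t)=(T_{1,\kappa}(t))^*$.

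\medskip
All of these steps are essentially routine once arranged in this order; the one point requiring a little care — and what I expect to be the main (minor) obstacle — is the uniform-in-$\kappa$ $L^\infty$ bound on the difference $T_{p,\kappa}(t)u-\e^{t(Q-C)}P_{H_0}u$ used in the interpolation argument for $2<p<\infty$. It rests on the $L^\infty$-contractivity of $T_{\infty,\kappa}$ from Proposition \ref{p.properties}/Corollary \ref{c.lp}, and on the fact that $\e^{t(Q-C)}P_{H_0}$ is an $L^\infty$-contraction because $Q$ has non-positive row sums and $C\ge 0$, so that $\e^{t(Q-C)}$ is sub-Markovian and $P_{H_0}$ is itself an $L^\infty$-contraction.
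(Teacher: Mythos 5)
Your proof is correct, and it reaches the same destination as the paper's by a slightly different (and somewhat more elaborate) route. The paper's argument is a single sweep: for $u\in L^\infty(\Omega_0)$ it extracts an a.e.\ convergent subsequence from the $L^2$-convergent sequence $T_{2,\kappa}(t)u$, invokes the uniform bound $\|T_{2,\kappa}(t)u\|_\infty\le\|u\|_\infty$ as a dominating function, and obtains simultaneously the $L^p$-convergence for all $1\le p<\infty$ and the weak$^*$ convergence in $L^\infty$ by dominated convergence, finishing with the same density/$3\eps$ argument you use. You instead split into three regimes: H\"older on the bounded domain for $1\le p\le 2$, the interpolation inequality $\|f\|_p\le\|f\|_2^{2/p}\|f\|_\infty^{1-2/p}$ for $2<p<\infty$ (which has the small advantage of avoiding the subsequence extraction altogether, at the cost of having to verify that $\e^{t(Q-C)}P_{H_0}$ is $L^\infty$-bounded -- your sub-Markovianity argument for $Q-C$ is fine, though you could get the bound for free as an a.e.\ limit of $L^\infty$-contractions), and duality via $T_{\infty,\kappa}(t)=(S_{1,\kappa}(t))^*$ for $p=\infty$. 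The duality step is the most genuinely different part: the paper proves weak$^*$ convergence directly by dominated convergence, whereas you reduce it to the already-established $L^1$ case and to the identification of $\e^{t(Q-C)}P_{H_0}$ and $\e^{t(Q^*-C)}P_{H_0}$ as mutual adjoints for the $L^1$--$L^\infty$ pairing; that identification is indeed exactly what \eqref{eq.q} and \eqref{eq.qstar} provide, together with the self-adjointness of $P_{H_0}$, so this step closes correctly. Neither approach buys anything substantial over the other; the paper's is shorter, yours makes the role of the uniform $L^\infty$ contractivity and of the adjoint structure more explicit.
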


\begin{proof}
Let $u\in L^\infty(\Omega_0)$. By the previous theorem, $T_{2,\kappa}(t) u$ converges to $\e^{t(Q-C)}P_{H_0} u$ in the norm of $L^2(\Omega_0)$. Passing to a subsequence, we may and shall assume that we have almost sure convergence.
Since the sequence $T_{2,\kappa}(t)u$ is uniformly bounded (by $\|u\|_\infty$) it follows from the dominated convergence theorem
that $T_{2,\kappa}(t)u$ converges to $\e^{t(Q-C)}P_{H_0}u$ weak$^*$ in $L^\infty(\Omega_0)$. Another consequence of the dominated convergence theorem is that $T_{2,\kappa}(t)u = T_{p,\kappa}(t)u \to \e^{t(Q-C)}P_{H_0} u$ in $L^p(\Omega_0)$ for every $1\leq p <\infty$. Since $L^\infty(\Omega_0)$ is 
dense in $L^p(\Omega_0)$ for $1\leq p <\infty$ and since the operators $T_{p,\kappa}(t), \kappa >0 $ are uniformly bounded, a
$3\eps$ argument yields that $T_{p,\kappa}(t)u  \to \e^{t(Q-C)}P_{H_0}f$ in $L^p(\Omega_0)$ for every $u\in L^p(\Omega_0)$. The corresponding
statements for $S_{p,\kappa}$ are obtained similarly.
\end{proof}

\begin{rem}
We note that with Theorem \ref{t.lpconv} at hand, we can now also generalize Corollary \ref{cor.main} to the $L^p$-setting for
$p \in [1,\infty)$ with basically the same proof.
A related convergence result also holds for $p=\infty$. However, the situation is slightly more complicated as we are dealing with a semigroup which is not strongly continous. However, if we interpret the integral in \eqref{eq.voc} as a weak$^*$-integral, then the integral is well-defined whenever $f$ is weak$^*$ measurable and $\|f\|$ is integrable. If we accept \eqref{eq.voc} as definition of a mild solution in the case of $p=\infty$, then, in the situation of Corollary \ref{cor.main} we easily obtain pointwise weak$^*$-convergence of mild solutions.
\end{rem}


\section{Discussion}

In modeling biological processes one often needs to take into account different time-scales of the processes involved \cite{banalachokniga,knigazcup}. This is in particular the case when one of the components of the model is diffusion which in certain circumstances may transpire to be much faster than other processes. For example, 
in the Alt and Lauffenberger's \cite{alt} model of leucocytes reacting to a bacterial invasion by moving up a gradient of some chemical  attractant produced by the bacteria (see Section 13.4.2 in \cite{keenersneyd}) a system of three PDEs is reduced to one equation provided bacterial diffusion is much smaller than the diffusion of leukocytes or of chemoattractants (which is typically the case). Similarly, in the early carcinogenesis model of Marcinak--Czochra and Kimmel \cite{osobliwiec,nowaania,markim1,markim2},  a system of two ODEs coupled with a single diffusion equation (involving Neumann boundary conditions) is replaced by a so-called shadow system of integro-differential equations with ordinary differentiation, provided diffusion may be assumed fast. 

In this context it is worth recalling that one of the fundamental properties of diffusion in a bounded domain is that it `averages' solutions (of the heat equation with Neumann boundary condition) over the domain. As it transpires, it is this homogenization effect of diffusion, when coupled with other physical or biological forces that leads to intriguing singular perturbations; this is exemplified by the analysis of the models in Section \ref{sec:6} (see also \cite{osobliwiec}).

In this paper we describe the situation in which fast diffusion in several bounded domains separated by semi-permeable membranes is accompanied by low permeability of the membranes. Assuming that the flux through the membranes is of moderate value, we show that such models are well-approximated by those based on Markov chains. More specifically, because of the homogenization effect, all points in each domain of diffusion are lumped into a single state, and the non-negligible  flux forces so-formed new states to communicate as the states of a Markov chain (eq. \eqref{qkl} provides the entries in its intensity matrix). 

Certainly, applicability of the theorem depends in a crucial way on whether and to what extend diffusion involved in the model is faster than other processes. Nevertheless, the literature of the subject provides numerous examples of such situations. Two of them: the model of intracellular dynamics and that of neurotransmitters are discussed in detail in Section \ref{sec:6}. Our third example is of slightly different type: its main purpose is to  show that diffusion of kinases in a cell cannot be too fast for signaling pathways to work  properly.

From the mathematical viewpoint, the established principle  is a close relative of the famous Freidlin-Wentzell averaging principle 
(\cite{fw,fwbook}, see also \cite{freidlin}), but it differs from its more noble cousin 
in the crucial role played by transmission conditions, which are of marginal or no importance in the latter. These conditions, sometimes referred to as radiation boundary conditions, describe in probabilistic and analytic terms the way particles permeate through  the membranes, and thus, indirectly, the flux, which influences the model in a critical way (see eq. \eqref{qkl} again).  

\appendix
\section{Proof of Lemma \ref{l.intersect}}

In this appendix, we prove Lemma \ref{l.intersect}, which states that the set of points in the boundary which are adjacent to three or more of the subdomains $\Omega_j$, $j=0, \ldots N$ has Hausdorff measure zero.

\begin{proof}[Proof of Lemma \ref{l.intersect}]
Since any set of non-zero Hausdorff measure contains a non-empty open set, it suffices to show 
that if there is a non-empty open set $V \subset \Gamma_{k,\ell}$, then there is a non-empty  subset $V_0$ of  $V$ which is open {(in the relative topology)} in 
$\Gamma_{k,\ell}$ and an open subset $U$ of $\CR^{d}$ such that 
\begin{equation} \label{warunki} 
V_0 \subset U, \qquad U \setminus V_0 \subset \Omega_k \cup \Omega_\ell. 
\end{equation}

To prove this claim, pick a point in $V$. Choosing a suitable coordinate system, we may assume without loss of generality that there exists an open neighborhood $V_0 \subset V$ of this point such that the following conditions hold (see Figure \ref{empty}):
\begin{figure}[h]
\begin{tikzpicture}
\draw[dashed,color=blue] (-1,0)--(1,0)--(1,5.2)--(-1,5.2)--(-1,0); 
\draw[dashed,color=orange] (-2+0.5,0)--(2+0.5,4)--(1+0.5,5)--(-3+0.5,1)--(-2+0.5,0);
\draw (-1,2.5)--(-0.5,2.8)--(-0.25,1.95)--(0.25,1.9)--(1,2.5); 
\draw[dashed,color=green] (-1,2.5)--(-1,0)--(1,0)--(1,2.5);
\draw[dashed,color=magenta] (-1,2.5)--(1.5,5)--(2.5,4)--(1,2.5);

\node[above] at (0,0.5) {$C_k^-$};
\node[above] at (0,4) {$C_k^+$};
\node[above] at (1.8,3.75) {$C_\ell^-$};
\node[above] at (-1.5,0.75) {$C_\ell^+$};
\end{tikzpicture}
\caption{The set $C_k^+ \cap C_\ell^+$ is empty: $V_0$ is the graph drawn with solid line.}\label{empty}
\end{figure}
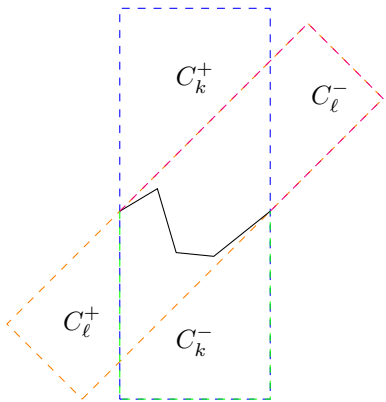 

\begin{itemize}
\item[1. ] There are two cylinders $C_i=B_i \times (a_i,b_i), i=k,{\ell}$, where $B_i$'s are open balls in $\CR^{d-1} $ and $(a_i,b_i)${'}s are open subintervals of $\CR$, and

\item[2. ] there is an isomorphism $\mcj $ of $\CR^d$ and Lipschitz continuous functions $g_i: B_i \to \CR $ such that defining $\phi_i (w,t) = t- g_i (w)$ for $(w,t) \in C_i$, we have 
\begin{itemize}
\item [a. ] $\Omega_k \cap C_k= \{\phi_k < 0\}, C_k \setminus \overline{\Omega_k} = \{\phi_k > 0\},$  and $V_0 = \{\phi_k=0\},$
\item [b. ] $\Omega_\ell \cap C_\ell = \mcj\{\phi_\ell < 0\}, C_\ell \setminus \overline{\Omega_\ell} = \mcj \{\phi_\ell > 0\},$  and $V_0 = \mcj \{\phi_\ell=0\}.$
\end{itemize}
\end{itemize}
We note that continuity of $g_i$s implies continuity of $\phi_i$s as functions of two variables.

We claim that the neighborhood we look for is $U = C_{{k}}\cap C_{{\ell}}$. Since {$V_0\subset C_k$ and $V_0\subset C_\ell$, we clearly have $V_0 \subset U$. Let us show that $U\setminus V_0 \subset \Omega_k\cup\Omega_\ell$}.
To this end, we first simplify our notations by putting
\begin{align*} C_k^+ &= \{\phi_k > 0\}, \quad \,\, \, \, C_k^- = \{\phi_k < 0\}, \\
 C_\ell^+ &= \mcj \{\phi_\ell > 0\}, \quad C_\ell^- = \mcj\{\phi_\ell < 0\},\end{align*} 
and then write $U\setminus V_0 = (C_k^- \cup C_k^+)\cap (C_\ell^- \cup C_\ell^+)$ as the union of 
\[ C_k^- \cap (C_\ell^- \cup C_\ell^+) \subset C_k^- \subset \Omega_k \]
and 
\[ C_k^+ \cap (C_\ell^- \cup C_\ell^+) = (C_k^+ \cap C_\ell^-) \cup (C_k^+ \cap C_\ell^+).\]
Since $C_k^+ \cap C_\ell^- \subset C_\ell^- \subset \Omega_\ell$, it suffices to show that 
$C_k^+ \cap C_\ell^+$ is empty (see again Figure \ref{empty}). 

Suppose that, contrary to our claim, there is $(w_0,t_0) \in C_k$ that belongs to $C_k^+ \cap C_\ell^+$. Then, there is $\eps >0$ such that $(w,t_0) \in C_k^+ \cap C_\ell^+$ for all $w {\in B(w_0,\eps)}$, {where $B(w_0,\eps)$ denotes the ball in $\CR^{d-1}$ of radius $\eps$ centered at $w_0$}. 
Fix such a $w$, and let $I \subset C_k$ be the closed {line segment} 
with ends $z_1 = (w,g_k(w))$ and $z_2 = (w,t_0).$ Then $\tilde I := 
\mcj^{-1} I$ is {also a line segment} (since $\mcj$ is an isometry) {and it is} contained in $C_\ell $ (since $C_\ell $ is a convex set containing ${\mcj}^{-1}(z_1)$ and ${\mcj}^{-1}(z_2)$). {Moreover,} we have $\phi_\ell \circ \mcj^{-1} (z_1) =0$ and $\phi_\ell \circ \mcj^{-1} (z_2) >0$.

We note that, {$\phi_\ell$ is positive on $\tilde I\setminus\mcj\{z_1\}$. Indeed, otherwise we would have $\phi_\ell (z) =0$ for some $z$ in the interior of $\tilde I$. But this implies that $\mcj (z) \in I$ and, since $\mcj V_0 = V_0$, also
$\mcj (z) \in V_0$. This is a contradiction to the } fact that, by the definition of $C_k$, on $I$ there is precisely one point, {namely} $z_1$, of $V_0$. Therefore, $I$ does not contain points of $\Omega_\ell$. On the other hand, by the definition of $C_k$, the open {line segment} joining $z_0 = (w,a_k)$ and $z_1$ is contained in $\Omega_k$ and thus cannot contain points of $\Omega_\ell$, either.

{Altogether we have} showed that the cylinder 
\[ C= {B}(w_0, \eps) \times (a_k, t_0) \]
has empty intersection with $\Omega_\ell$. {But $C$} contains $V_0$. This clearly contradicts the fact that $V_0$ is a part of boundary of $\Gamma_\ell$, and {hence} our assumption that $C_k^+ \cap C_\ell^+$  is non-empty was false. 
\end{proof}

We note that the argument presented above does not require the boundary to be Lipschitz. It suffices to assume that the boundary is continuous.

\end{document}